\theoremstyle{plain}
\newtheorem{theorem}{Theorem}
\newtheorem{lemma}[theorem]{Lemma}
\newtheorem{corollary}[theorem]{Corollary}
\newtheorem{proposition}[theorem]{Proposition}
\newtheorem{conjecture}[theorem]{Conjecture}
\newcommand{\thmlabel}[1]{\label{thm:#1}}
\newcommand{\thmref}[1]{Theorem~\ref{thm:#1}}
\newcommand{\lemlabel}[1]{\label{lem:#1}}
\newcommand{\lemref}[1]{Lemma~\ref{lem:#1}}
\newcommand{\conjlabel}[1]{\label{con:#1}}
\newcommand{\conjref}[1]{Conjecture~\ref{con:#1}}
\newcommand{\figlabel}[1]{\label{fig:#1}}
\newcommand{\figref}[1]{Figure~\ref{fig:#1}}
\newcommand{\twofigref}[2]{Figures~\ref{fig:#1} and \ref{fig:#2}}
\newcommand{\seclabel}[1]{\label{sec:#1}}
\newcommand{\secref}[1]{Section~\ref{sec:#1}}
\newcommand{\corlabel}[1]{\label{cor:#1}}
\newcommand{\corref}[1]{Corollary~\ref{cor:#1}}
\newcommand{\proplabel}[1]{\label{prop:#1}}
\newcommand{\propref}[1]{Proposition~\ref{prop:#1}}
\newcommand{\0}[1]{}
\newcommand{\midx}{\delta}
\newcommand{\spacing}[1]{\renewcommand{\baselinestretch}{#1}\setlength{\footnotesep}{\baselinestretch\footnotesep}}
\newcommand{\Z}{\ensuremath{\mathbb{Z}}}
\newcommand{\R}{\ensuremath{\mathbb{R}}}
\newcommand{\Figure}[4][htb]{
\begin{figure}[#1]
	\vspace*{1ex}
	\begin{center}#3\end{center}
	\vspace*{-1ex}
	\caption{\figlabel{#2}#4}
\end{figure}
}
\newcommand{\vvv}[1]{(#1)}
\newcommand{\half}{\ensuremath{\protect\tfrac{1}{2}}}
\newcommand{\ceil}[1]{\ensuremath{\protect\lceil#1\rceil}}
\DeclareMathOperator{\ES}{ES}
\DeclareMathOperator{\col}{col}
\DeclareMathOperator{\conv}{conv}
\begin{document}

\title{Blocking Coloured Point Sets}

\thanks{This is the full version of an extended abstract to
    appear in the 26th European Workshop on Computational Geometry
    (EuroCG '10).}

\author[]{Greg Aloupis}
\address{Institute of Information Science, Academia     Sinica\newline
  Taipei, Taiwan}
\email{aloupis.greg@gmail.com}

\author[]{Brad Ballinger}
\address{Department of Mathematics, 
Humboldt State University\newline 
Arcata, California, U.S.A}
\email{bjb86@humboldt.edu}

\author[]{S\'ebastien Collette}
\address{Charg\'e de Recherches du F.R.S.-FNRS\newline D\'epartement d'Informatique,  Universit\'e Libre de Bruxelles\newline Brussels, Belgium}
\email{sebastien.collette@ulb.ac.be}

\author[]{Stefan Langerman}
\address{Ma\^itre de Recherches du F.R.S.-FNRS\newline D\'epartement d'Informatique,  Universit\'e Libre de Bruxelles\newline Brussels, Belgium}
\email{stefan.langerman@ulb.ac.be}

\author[]{Attila P\'or}
\address{Department of Mathematics, 
  Western Kentucky University
\newline Bowling Green,  Kentucky, U.S.A.}
\email{attila.por@wku.edu}

\author[]{David~R.~Wood}
\address{QEII Research Fellow\newline 
Department of Mathematics and Statistics, 
 The University of Melbourne
\newline Melbourne, Australia}
\email{woodd@unimelb.edu.au}

\subjclass[2000]{52C10 Erd\H os problems and related topics of discrete geometry, 05D10 Ramsey theory}

\begin{abstract}
This paper studies problems related to visibility among points in the plane.  
A point $x$ \emph{blocks} two points $v$ and $w$ if $x$ is in the
interior of the line segment $\overline{vw}$. 
A set of points $P$ is \emph{$k$-blocked} if each point in $P$ is
assigned one of $k$ colours, such that distinct points $v,w\in P$ are
assigned the same colour if and only if some other point in $P$ blocks
$v$ and $w$. The focus of this paper is the conjecture that each
$k$-blocked set has bounded size (as a function of $k$). Results in
the literature imply that every 2-blocked set has at most 3 points,
and every $3$-blocked set has at most 6 points. We prove that  every $4$-blocked set
has at most 12 points, and that this bound is tight.  In fact, we characterise all sets
$\{n_1,n_2,n_3,n_4\}$ such that some 4-blocked set has exactly $n_i$
points in the $i$-th colour class. Amongst other results, for
infinitely many values of $k$, we construct $k$-blocked sets with
$k^{1.79\ldots}$ points.
\end{abstract}

\maketitle


\newpage
\section{Introduction}
\seclabel{Intro}

This paper studies problems related to visibility and blocking in sets
of coloured points in the plane.  A point $x$ \emph{blocks} two points
$v$ and $w$ if $x$ is in the interior of the line segment
$\overline{vw}$. Let $P$ be a finite set of points in the plane. Two
points $v$ and $w$ are \emph{visible} with respect to $P$ if no point
in $P$ blocks $v$ and $w$.  The \emph{visibility graph} of $P$
has vertex set $P$, where two distinct points $v,w\in P$ are adjacent
if and only if they are visible with respect to $P$. A point set $B$ \emph{blocks}
$P$ if $P\cap B=\emptyset$ and for all distinct $v,w\in P$ there is a
point in $B$ that blocks $v$ and $w$. That is, no two points in $P$
are visible with respect to $P\cup B$, or alternatively, $P$ is an
independent set in the visibility graph of $P\cup B$.

A set of points $P$ is \emph{$k$-blocked} if each point in $P$ is
assigned one of $k$ colours, such that each pair of points $v,w\in P$
are visible with respect to $P$ if and only if $v$ and $w$ are
coloured differently. Thus $v$ and $w$ are assigned the same colour if
and only if some other point in $P$ blocks $v$ and $w$. A
\emph{$k$-set} is a multiset of $k$ positive
integers. For a $k$-set $\{n_1,\dots,n_k\}$, we say $P$ is
$\{n_1,\dots,n_k\}$-blocked if it is $k$-blocked and for some
labelling of the colours by the integers $[k]:=\{1,2,\dots,k\}$, the
$i$-th colour class has exactly $n_i$ points, for each
$i\in[k]$. Equivalently, $P$ is $\{n_1,\dots,n_k\}$-blocked if the
visibility graph of $P$ is the complete $k$-partite graph
$K(n_1,\dots,n_k)$.  A $k$-set $\{n_1,\dots,n_k\}$ is
\emph{representable} if there is an $\{n_1,\dots,n_k\}$-blocked point
set. See \figref{K3333} for an example.


\Figure[!ht]{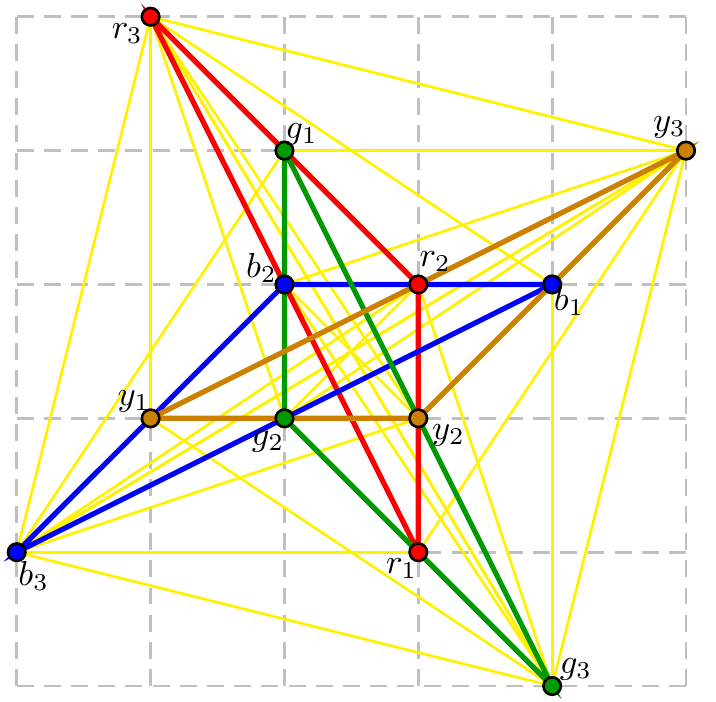}{\includegraphics{K3333}}{A $\{3,3,3,3\}$-blocked point set.}

The following fundamental conjecture regarding $k$-blocked point sets
is the focus of this paper. 

\begin{conjecture}
  \conjlabel{kBlocked} For each integer $k$ there  is an
  integer $n$ such that every $k$-blocked set has at most $n$ points.
\end{conjecture}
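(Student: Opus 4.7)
My plan is to build on the following key structural observation: no four points of $P$ are collinear. Suppose $p_1,p_2,p_3,p_4\in P$ lie on a common line in this order. Consecutive pairs have no interior blocker in $P$ and hence receive distinct colours; in particular $\col(p_1)\neq\col(p_2)$. On the other hand, $p_2$ lies strictly between $p_1$ and $p_4$, and $p_3$ lies strictly between $p_2$ and $p_4$, so $\col(p_1)=\col(p_4)$ (blocked by $p_2$) and $\col(p_2)=\col(p_4)$ (blocked by $p_3$). Chaining these equalities gives $\col(p_1)=\col(p_2)$, a contradiction. The same reasoning shows that on every 3-point line $q_1,q_2,q_3$ one has $\col(q_1)=\col(q_3)\neq\col(q_2)$, so the colour pattern on a 3-rich line is forced.

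This yields a clean bijection between the 3-point lines of $P$ and the monochromatic pairs: each 3-point line supplies one monochromatic pair (its two endpoints), and conversely each monochromatic pair has a unique blocker and so lies on exactly one 3-point line. Writing $S=\sum_{i=1}^{k}\binom{n_i}{2}$ and $L_3$ for the number of 3-point lines, we therefore get $S=L_3$. A double count over blockers gives $\sum_{p\in P}b(p)=L_3=S$, where $b(p)$ is the number of monochromatic pairs blocked by $p$. These identities, together with incidence-type inequalities (Szemer\'edi--Trotter, or Green--Tao--Beck type bounds on ordinary lines), should force strong geometric rigidity on any sufficiently large $k$-blocked set.

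On top of this framework I would attempt induction on $k$. The plan is to pick the smallest colour class $C_i$ (of size at most $|P|/k$), delete it, and argue that after a further controlled deletion the residual point set is $\{n_1,\dots,n_{i-1},n_{i+1},\dots,n_k\}$-blocked, so that the inductive hypothesis applies. The no-four-collinear lemma is helpful here because it limits the structural role any single point can play as a blocker (at most two monochromatic pairs on any line pass through it), which should limit how disruptive the deletion of $C_i$ can be.

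The main obstacle will be this inductive reduction. Deleting $C_i$ can break the $k$-blocked property: a monochromatic pair outside $C_i$ whose unique blocker lay in $C_i$ becomes visible, and two points in different surviving classes that were distinguishable only via blockers in $C_i$ pose no issue, but the former case must be repaired. Controlling the number of such broken pairs requires a quantitative bound on how much a single colour class can monopolise the blocker role for the rest, and it is not obvious that a bounded-size repair always exists; this is presumably the reason the conjecture remains open. A possible alternative route is Ramsey-theoretic: extract from a hypothetically enormous $k$-blocked set a highly symmetric sub-configuration (for example, one whose colour classes are all collinear triples as described above) and derive a direct combinatorial contradiction, but this trades the reduction problem for the equally delicate task of finding enough structural regularity to exploit.
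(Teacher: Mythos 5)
The statement you have set out to prove is Conjecture~1 of the paper, and the paper itself offers no proof of it: it is the open problem that motivates the whole paper, established there only for $k\le 3$ (from earlier work of K\'ara, P\'or and Wood), for $k=4$ (via Harborth's empty-pentagon theorem and a lengthy case analysis of representable $4$-sets), and for the restricted class of midpoint-blocked sets (via the superlinear lower bound on the number of midpoints of a point set in general position). Your proposal does not close it either. The parts you actually prove are correct but already present in or immediate from the paper: the no-four-collinear observation is Lemma~1, and its consequences --- each monochromatic pair has a unique blocker, lies on exactly one $3$-point line, and the colour pattern on such a line is forced --- are routine. The resulting identity $\sum_i\binom{n_i}{2}=L_3=\sum_{p\in P} b(p)$ carries no size information on its own: it is consistent with arbitrarily large hypothetical configurations (the number of monochromatic pairs can be as small as linear in $|P|$ when the classes are small), so Szemer\'edi--Trotter or ordinary-line bounds do not produce any contradiction at this level of generality.

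The decisive gap is the inductive step, and you concede it yourself. Deleting the smallest colour class $C_i$ removes the unique blockers of a potentially unbounded number of monochromatic pairs in the surviving classes --- a single class can carry essentially all of the blocking load for another class, as in the $\{3,3,3,3\}$ configuration or the projection of $[3]^d$ --- and after the deletion the residual set is in general no longer $(k-1)$-blocked. Repairing this would require adding blockers or re-colouring, neither of which is permitted, and no bounded ``controlled deletion'' is known to restore the property; the bound ``at most two monochromatic pairs through any point on a given line'' does not limit the total damage, since a point can block pairs on many different lines. This missing reduction is precisely where the difficulty of the conjecture sits: the paper's conditional route goes through the superlinearity of the blocking number $b(n)$ (Conjecture~4, itself open), and its unconditional results rely on tools (Harborth's theorem, the superlinear bound on $m(n)$) that only reach $k=4$ or the midpoint-blocked case. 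As it stands, your plan replaces an open conjecture by an equally open reduction step rather than proving the statement.
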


As illustrated in
\figref{23Blocked}, the following theorem is a direct consequence of the characterisation
of 2- and 3-colourable visibility graphs by \citet{KPW-DCG05}.

\begin{theorem}
\thmlabel{23} 
$\{1,1\}$ and $\{1,2\}$ are the only representable 2-sets, and \\
$\{1,1,1\}$, $\{1,1,2\}$, $\{1,2,2\}$ and $\{2,2,2\}$ are the only representable 3-sets. 
\end{theorem}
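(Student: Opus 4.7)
My plan is to split the statement into a representability half and a non-representability half. For representability, each of the six listed sets is realised by a small explicit configuration, exactly as pictured in \figref{23Blocked}: two points realise $\{1,1\}$; three collinear points with the middle one alone in its colour class realise $\{1,2\}$; three non-collinear points realise $\{1,1,1\}$; three collinear points plus a fourth point off the line realise $\{1,1,2\}$; a symmetric five-point configuration realises $\{1,2,2\}$; and the classical octahedron realisation (three concurrent lines, opposite points on each line paired) realises $\{2,2,2\}$. In every case one checks mechanically that same-colour pairs are blocked while cross-colour pairs remain mutually visible.

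For the converse, I would invoke the characterisation by \citet{KPW-DCG05} of point sets whose visibility graph has chromatic number at most $2$ (respectively $3$). Their theorem bounds any such point set by $3$ (respectively $6$) points and lists the possible visibility graphs explicitly. Since $P$ is $\{n_1,\dots,n_k\}$-blocked iff its visibility graph equals the complete multipartite graph $K(n_1,\dots,n_k)$, it then suffices to sift the KPW lists for complete multipartite graphs. The $2$-chromatic list contributes only $K_{1,1}$ and $K_{1,2}$, yielding the representable 2-sets $\{1,1\}$ and $\{1,2\}$; the $3$-chromatic list contributes $K_{3}$, $K_{1,1,2}$, $K_{1,2,2}$ and $K_{2,2,2}$, yielding the four listed 3-sets.

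The main obstacle is simply applying \citet{KPW-DCG05} correctly and reading off which entries in their classification are complete multipartite; the rest is bookkeeping. If a self-contained argument were preferred, the workhorse observation would be that between any two consecutive same-colour points on a line there must lie a point of a different colour; iterating this along maximal collinear monochromatic subsets, combined with a convex-hull argument, rules out each remaining candidate ($\{2,2\}$, $\{1,1,3\}$, $\{1,3,3\}$, $\{1,1,4\}$, $\{1,2,3\}$) in turn. The subtlest case is $\{1,2,3\}$, where one must simultaneously provide blockers for the three same-colour pairs in the largest class while preserving all nine cross-colour visibilities.
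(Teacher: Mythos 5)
Your overall route matches the paper's, which derives this statement exactly as you propose: cite the \citet{KPW-DCG05} characterisation of 2- and 3-colourable visibility graphs, sift it for complete multipartite graphs, and exhibit the six configurations of \figref{23Blocked}. However, two concrete steps as you have written them would fail. First, your construction for $\{2,2,2\}$ --- ``three concurrent lines, opposite points on each line paired'' --- is not a blocked set: the only candidate blocker for a same-coloured opposite pair is the common intersection point of the three lines, which is not one of the six points (and adding it would create a seventh point in a fourth colour class), so every monochromatic pair stays visible and the visibility graph is $K_6$, not $K_{2,2,2}$. The correct six-point realisation uses a cyclic blocking pattern, as in the paper's figure: for instance $a_1,b_1,a_2$ collinear with $b_1$ blocking the $A$-pair, $b_1,c_1,b_2$ collinear with $c_1$ blocking the $B$-pair, and $c_1,a_1,c_2$ collinear with $a_1$ blocking the $C$-pair. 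The concurrent-lines picture does work for $\{1,2,2\}$, where the centre point is itself the singleton class --- probably the source of the slip.

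Second, \citet{KPW-DCG05} does not ``bound any such point set by $3$ (respectively $6$) points'': arbitrarily many collinear points have a $2$-chromatic visibility graph (a path), and, say, a collinear set plus one apex is $3$-chromatic, so $2$- and $3$-colourable visibility graphs are unbounded in size. Their theorem gives a structural characterisation, and the bounds $3$ and $6$ appear only after you intersect that characterisation with the requirement that the visibility graph be complete multipartite, i.e.\ that the set be blocked: a path is complete multipartite only on at most three vertices (giving $K_{1,1}$ and $K_{1,2}$), and among the $3$-colourable families only $K_3$, $K_{1,1,2}$, $K_{1,2,2}$ and $K_{2,2,2}$ are complete multipartite. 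So your sifting step is the right move and is exactly how the paper reads off the theorem, but it has to carry the full weight of the argument --- the size bound is an output of that sift, not an input supplied by the cited theorem.
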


\Figure{23Blocked}{\includegraphics{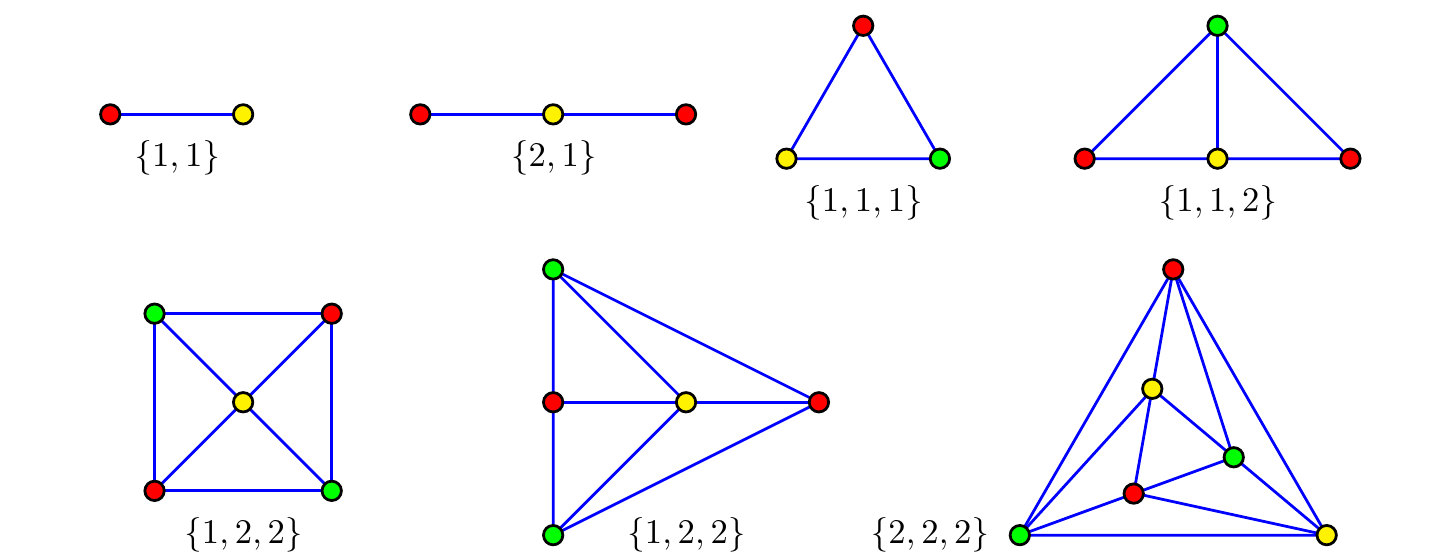}}{The 2-blocked and
  3-blocked point sets.}

In particular, every $2$-blocked point set has at most 3 points, and
every $3$-blocked point set has at most 6 points. This proves
\conjref{kBlocked} for $k\leq 3$.

This paper makes the following contributions. \secref{Background}
introduces some background motivation for the study of $k$-blocked
point sets, and observes that results in the literature prove
\conjref{kBlocked} for $k=4$. \secref{SmallColourClasses} describes
methods for constructing $k$-blocked sets from a given $(k-1)$-blocked
set. These methods lead to a characterisation of representable
$k$-sets when each colour class has at most three
points. \secref{Four} studies the $k=4$ case in more detail. In
particular, we characterise the representable 4-sets, and conclude
that the example in \figref{K3333} is in fact the largest
4-blocked point set. \secref{Midpoint} introduces a special class of
$k$-blocked sets (so-called midpoint-blocked sets) that lead to a
construction of the largest known $k$-blocked sets for infinitely many
values of $k$.

\section{Basic Properties}

\begin{lemma}
  \lemlabel{SelfBlockingThree} At most three points are collinear in
  every $k$-blocked point set.
\end{lemma}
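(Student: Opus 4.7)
The plan is to argue by contradiction: assume that some line $\ell$ contains four points of $P$, and derive a coloring inconsistency. So suppose $a,b,c,d$ are four distinct points of $P$ that lie on $\ell$, labelled in the order they appear along $\ell$.

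First I would determine which pairs among $\{a,b,c,d\}$ must receive the same colour. Observe that any point of $P$ that blocks two collinear points must itself lie on $\ell$. Hence the pair $\{a,b\}$ cannot be blocked by any point of $P$ (there is no point of $P$ strictly between $a$ and $b$ on $\ell$), so $a$ and $b$ are visible and therefore receive different colours; the same reasoning applies to $\{b,c\}$ and $\{c,d\}$. On the other hand, $b$ lies strictly between $a$ and $c$ on $\ell$, so $b$ blocks $\{a,c\}$ and they share a colour; similarly $b$ blocks $\{a,d\}$ and $c$ blocks $\{b,d\}$, so $a,c,d$ all share a colour and $b,d$ share a colour.

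Combining these constraints yields an immediate contradiction: $a$ and $c$ have the same colour, $c$ and $d$ have different colours, yet $a$ and $d$ have the same colour. The same colour cannot simultaneously equal and differ from the colour of $d$, so no such configuration of four collinear points can exist in a $k$-blocked set.

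The entire argument is essentially bookkeeping of the defining biconditional of $k$-blocked sets (same colour $\Leftrightarrow$ blocked by another point of $P$), applied to the four pairs $\{a,b\}$, $\{b,c\}$, $\{c,d\}$, $\{a,c\}$. The only non-routine step is the observation that a blocker of two collinear points of $P$ must lie on the same line, which rules out any off-$\ell$ point from affecting visibility along $\ell$; once this is stated, the rest is a one-line parity contradiction, so I do not anticipate any real obstacle.
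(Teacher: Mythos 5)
Your argument is correct and is essentially the paper's own proof: the second point along the line blocks the first point from the two beyond it, forcing three of the four points into one colour class, two of which are mutually visible --- a contradiction with the defining biconditional. (Both you and the paper implicitly take the four points to be consecutive among the points of $P$ on $\ell$, so that the visible pair you invoke really has no blocker; this is worth one explicit word but is not a substantive gap.)
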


\begin{proof}
  Suppose that four points  $p,q,r,s$ are collinear in this order. Thus $(p,q,r,s)$ is an induced path  in the visibility graph.  Thus $p$ is not adjacent to $r$ and not
  adjacent to $s$.  Thus $p$, $r$, and $s$ have the same colour.  This
  is a contradiction since $r$ and $s$ are adjacent.  Thus no four
  points are collinear.
\end{proof}

A set of points $P$ is in \emph{general position} if no three
points in $P$ are collinear.

\begin{lemma}
  \lemlabel{GenPos} Each colour class in a $k$-blocked
  point set is in general position.
\end{lemma}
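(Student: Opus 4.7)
The plan is to argue by contradiction using \lemref{SelfBlockingThree} directly. Suppose some colour class contains three collinear points $p,q,r$, appearing in that order on their common line $\ell$. Since $p$ and $q$ receive the same colour, they are not visible with respect to $P$, so there exists a point $x\in P\setminus\{p,q\}$ blocking them.

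By definition of blocking, $x$ lies in the interior of the segment $\overline{pq}$, and in particular $x\in\ell$. Therefore the four distinct points $p,x,q,r$ all lie on $\ell$, contradicting \lemref{SelfBlockingThree}. Hence no three points of a single colour class are collinear, which is exactly the assertion that each colour class is in general position.

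There is really no obstacle here: once \lemref{SelfBlockingThree} is available, the only thing to notice is that a monochromatic pair must be blocked by a point of $P$, and any such blocker is forced to lie on the line through the three assumed collinear points, immediately producing the forbidden fourth collinear point. The argument is therefore a one-step reduction to the previous lemma.
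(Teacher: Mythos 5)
Your proof is correct and is essentially the paper's argument: both are one-step reductions to \lemref{SelfBlockingThree}, the only difference being that you note the blocker required for the monochromatic pair would be a forbidden fourth point on the line, while the paper notes that with no fourth point available two of the three collinear points are visible yet monochromatic; these are mirror-image phrasings of the same contradiction.
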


\begin{proof}
  Suppose on the contrary that three points from a single colour
  class are collinear.  Then no other points are in the same line by
  \lemref{SelfBlockingThree}.  Thus two of the three points are
  adjacent, which is a contradiction.
\end{proof}

\section{Some Background Motivation}
\seclabel{Background}

Much recent research on blockers began with the following conjecture by
\citet{KPW-DCG05}.

\begin{conjecture}[Big-Line-Big-Clique Conjecture \citep{KPW-DCG05}]
  \conjlabel{BigClique} For all integers $t$ and $\ell$ there is an
  integer $n$ such that for every finite set $P$ of at least $n$
  points in the plane:
  \begin{itemize}
  \item $P$ contains $\ell$ collinear points, or
  \item $P$ contains $t$ pairwise visible points\\ 
(that is, the visibility graph of $P$ contains a $t$-clique).
  \end{itemize}
\end{conjecture}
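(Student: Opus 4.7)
The natural plan is induction on $t$, carrying $\ell$ as a second parameter in the bounding function $n(t,\ell)$. The base case $t=2$ is essentially immediate, since two distinct points are always joined by a segment, and once $|P|$ is much larger than $\ell$ some segment must escape the $\binom{\ell-1}{2}$ or so collinearity-induced blockers. The first substantive case is $t=3$: pick a visible pair $u,v$, and look for a third point visible to both. By \lemref{SelfBlockingThree} every line through $u$ carries at most $\ell-1$ other points of $P$, and similarly through $v$, so once $|P|$ is sufficiently large compared to $\ell$ a double pigeonhole --- first over lines through $u$, then over lines through $v$ --- produces a common unblocked witness. This argument effectively establishes the $t=3$ case and sets the template for induction.

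For the inductive step, assume the conjecture for $t-1$ with bound $n(t-1,\ell)$, and take $|P|$ vastly larger than $n(t-1,\ell)$. Apply the inductive hypothesis to extract a $(t-1)$-clique $Q=\{v_1,\dots,v_{t-1}\}$ in the visibility graph of $P$; it then suffices to produce $u\in P\setminus Q$ visible to every $v_i$. For each $i$, a point $w$ is blocked from $v_i$ exactly when some third point of $P$ lies on the open segment $\overline{v_iw}$, so the ``bad'' set for $v_i$ is a union of rays from $v_i$ controlled by the at most $\ell-1$ other points of $P$ on each line through $v_i$. If the union of these bad sets over $i$ fails to cover $P\setminus Q$, we are done.

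The main obstacle is precisely this last covering estimate. The number of lines through a single $v_i$ that carry another point of $P$ can be as large as $|P|-1$, and every blocker on such a line eliminates a whole ray of candidates, so without further structure the bad set for $v_i$ can cover essentially all of $P\setminus Q$. Escaping this seems to require either (i) choosing $Q$ more carefully --- exploiting the fact that the inductive step actually produces many $(t-1)$-cliques, and selecting one whose vertices jointly avoid high-incidence lines --- or (ii) invoking Beck's theorem to force $P$ into either a ``mostly collinear'' regime or a ``many determined lines'' regime and treating each separately. I expect the genuinely hard part is the intermediate regime, where $P$ has many short collinear configurations but none of length $\ell$; controlling the global blocking pattern there seems to need a Ramsey-theoretic input beyond the elementary tools given so far, which is presumably why the statement is given as a conjecture rather than a theorem.
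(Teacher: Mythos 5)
The statement you are trying to prove is not a theorem of the paper at all: it is the Big-Line-Big-Clique Conjecture of K\'ara, P\'or and Wood, stated in \secref{Background} purely as motivation. The paper gives no proof; it only records that the conjecture is known for $t\leq 5$ and open beyond that (see the discussion around \conjref{BigClique} and the references \citep{KPW-DCG05,PorWood-Blockers,EmptyPentagon}), and the known cases are obtained not by induction on $t$ but by empty-polygon arguments (for instance, the $t=5$ case follows from the result of \citet{EmptyPentagon} that a sufficiently large set with no $\ell$ collinear points contains an empty convex pentagon, whose vertices are pairwise visible). So there is nothing in the paper for your argument to match, and no complete proof is known to exist.

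Your proposal also does not constitute a proof, and you have correctly located where it fails: the inductive step. Knowing that $P$ contains a $(t-1)$-clique $Q$ in its visibility graph gives no mechanism for extending $Q$, because the set of points blocked from a single vertex $v_i$ is not controlled by any bound depending only on $\ell$. Each line through $v_i$ carries at most $\ell-2$ further points of $P$, which only guarantees that $v_i$ sees roughly a $1/(\ell-2)$ fraction of $P$; intersecting $t-1$ such sets can be empty, and no pigeonhole or counting over the lines through the $v_i$ rescues this, since the number of such lines grows with $|P|$. The suggested repairs --- choosing among ``many'' $(t-1)$-cliques, or splitting into regimes via Beck's theorem --- are exactly the natural attacks that have not been made to work; the intermediate regime you flag (many collinear triples but no $\ell$ collinear points, with blockers arranged adversarially) is the substance of the open problem. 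Your base cases are fine (indeed $t=2$ is immediate for any $|P|\geq 2$ by taking a closest pair, and $t\leq 5$ is known), but the passage from $t-1$ to $t$ is a genuine gap, not a technicality, which is why the statement stands as a conjecture.
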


\conjref{BigClique} is true for $t\leq 5$, but is open for $t\geq6$ or
$\ell\geq4$; see \citep{PorWood-Blockers,EmptyPentagon}. Given that, in general,  \conjref{BigClique} is challenging, Jan K{\'a}ra suggested the following weakening. 

\begin{conjecture}[\citep{PorWood-Blockers}]
\conjlabel{BigChromaticNumber} 
For all integers $t$ and $\ell$ there is an integer $n$ such that for every
finite set $P$ of at least $n$ points in the plane: 
  \begin{itemize}
  \item $P$ contains $\ell$ collinear points, or
  \item the chromatic number of the visibility graph of $P$ is at least $t$.
 \end{itemize}
\end{conjecture}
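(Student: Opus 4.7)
This conjecture is weaker than Conjecture~\ref{con:BigClique} (since a $t$-clique in the visibility graph certifies chromatic number at least $t$), so it is known for $t\le 5$ via the cited results, and my plan is to attack the general case by induction on $t$. The base $t\le 3$ should reduce to Theorem~\ref{thm:23}: if the visibility graph of $P$ has chromatic number at most $2$, then $P$ splits into two independent sets, each of which is self-blocking in the sense that every two of its points are blocked by some third point of $P$; a case analysis using Lemma~\ref{lem:SelfBlockingThree} together with the characterisation of small blocked configurations should then bound $|P|$.

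For the inductive step, assume the statement for $t-1$ with some bound $m(t-1,\ell')$. Given $P$ with $|P|\ge n$ (to be chosen astronomically large) and no $\ell$ collinear points, suppose for contradiction that the visibility graph of $P$ has chromatic number at most $t-1$, and fix a proper colouring into classes $C_1,\dots,C_{t-1}$. The idea is to apply Erd\H{o}s--Szekeres inside the largest class $C_i$ to extract a huge subset $S\subseteq C_i$ in convex position, and then analyse the set $B\subseteq P\setminus C_i$ of blockers that witness the pairwise invisibilities in $S$. Since $S$ is convex, each blocker lies in the interior of $\mathrm{conv}(S)$; by Lemma~\ref{lem:SelfBlockingThree} no line contains more than three points of $B$; and $B$ itself still has no $\ell$ collinear points. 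One would then hope to recurse: apply the inductive hypothesis to $B$ (coloured by the remaining $t-2$ classes), derive a contradiction from the mismatch between the size of $S$ and the available number of blockers.

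The main obstacle, and the reason the conjecture remains open, is that independent sets in visibility graphs can be large and lack obvious structure, so the blocker set $B$ need not be coloured by only $t-2$ classes, and the bookkeeping above breaks. Concretely, the recursion succeeds only if one can force a large complete multipartite subgraph inside $B$, and this reduction to the $k$-blocked problem (\conjref{kBlocked}) seems to be the essential difficulty: proving \conjref{BigChromaticNumber} likely requires first settling \conjref{kBlocked} and then performing a Ramsey-type extraction of a complete $t$-partite subgraph from any $t$-chromatic visibility graph -- a step that has no clean analogue for general graphs and which is, I expect, where most of the real work would go.
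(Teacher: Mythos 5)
You are being asked about \conjref{BigChromaticNumber}, which is a \emph{conjecture}, not a theorem: the paper gives no proof of it, and it is open in general (the paper only notes it is known in the range where \conjref{BigClique} is known, i.e.\ $t\leq 5$ with no restriction gained on $\ell$). What the paper actually records are three implications surrounding it: \conjref{BigClique} implies it, since a $t$-clique forces chromatic number at least $t$ (your opening sentence matches this remark exactly); it in turn implies \conjref{kBlocked}, via the case $\ell=4$, $t=k+1$, using Lemma~\ref{lem:SelfBlockingThree}; and \citet{PorWood-Blockers} proved that \conjref{Blockers} implies it. So there is no proof in the paper to compare your proposal against, and your proposal --- as you yourself say --- does not constitute a proof either; nothing here can be certified as a correct proof of the statement.

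Two concrete points where your sketch, taken on its own terms, already breaks. First, the base case does not reduce to \thmref{23}: a proper $2$- or $3$-colouring of the visibility graph only guarantees that monochromatic pairs are blocked; it does not guarantee that bichromatic pairs are visible, so the visibility graph need not be complete multipartite and the notion of a $k$-blocked set does not apply. What you would need is the full characterisation of $2$- and $3$-chromatic visibility graphs of \citet{KPW-DCG05}, which is the stronger result from which \thmref{23} is itself derived in this paper. Second, in the inductive step the claim $B\subseteq P\setminus C_i$ is unjustified for general $\ell$: when $\ell\geq 5$, four collinear points are permitted, so a blocker of a monochromatic pair in $C_i$ may itself lie in $C_i$ (the argument excluding this works only when no four points are collinear); and even granting $B\subseteq P\setminus C_i$, the inductive hypothesis says nothing about $B$, since $B$ is a blocking set rather than a set whose visibility graph has small chromatic number relative to $t-1$ in any controlled way --- this is precisely the missing Ramsey-type extraction you point to. Your diagnosis that the essential difficulty is related to \conjref{kBlocked} and to super-linear lower bounds on blocking is consistent with the paper, which motivates \conjref{kBlocked} as a weakening and cites the implication from \conjref{Blockers}; but none of this amounts to progress on the conjecture itself.
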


\conjref{BigClique} implies \conjref{BigChromaticNumber} since every
graph that contains a $t$-clique has chromatic number at least $t$. 

\begin{proposition}
\conjref{BigChromaticNumber} with $\ell=4$ and $t=k+1$ implies \conjref{kBlocked}.
\end{proposition}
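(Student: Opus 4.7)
The plan is to observe that a $k$-blocked set automatically fails both alternatives in \conjref{BigChromaticNumber} for the stated parameters, so its cardinality must be bounded by the threshold the conjecture provides.

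More concretely, suppose \conjref{BigChromaticNumber} holds with $\ell = 4$ and $t = k+1$, and let $n$ be the corresponding integer. I would argue by contradiction: assume there is a $k$-blocked point set $P$ with $|P| \geq n$. First, by the definition of a $k$-blocked set, the points of $P$ are partitioned into $k$ colour classes, and two points are adjacent in the visibility graph of $P$ if and only if they have different colours; in particular, this colouring is a proper colouring of the visibility graph, so the chromatic number of the visibility graph of $P$ is at most $k$, ruling out the second alternative of \conjref{BigChromaticNumber}. Second, by \lemref{SelfBlockingThree}, every $k$-blocked set has at most three collinear points, which rules out the first alternative.

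Applying \conjref{BigChromaticNumber} to $P$ then yields a contradiction, so every $k$-blocked set has fewer than $n$ points, which is exactly \conjref{kBlocked} for this value of $k$. Since $k$ was arbitrary, the implication holds.

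There is no real obstacle here; the whole content of the argument is the two short observations that a $k$-blocked set (i) is properly $k$-coloured by its own colour classes and (ii) contains no four collinear points via \lemref{SelfBlockingThree}. The proof is essentially a direct unwrapping of definitions.
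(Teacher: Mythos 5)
Your proposal is correct and follows exactly the paper's own argument: assume the conjecture with $\ell=4$, $t=k+1$, note that a $k$-blocked set is properly $k$-coloured by its colour classes (so the chromatic number alternative fails) and has no four collinear points by \lemref{SelfBlockingThree} (so the collinearity alternative fails), yielding the bound by contradiction. Nothing is missing.
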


\begin{proof}
  Assume \conjref{BigChromaticNumber} holds for $\ell=4$ and
  $t=k+1$. Suppose there is a $k$-blocked set $P$ of at least $n$
  points. By \lemref{SelfBlockingThree}, at most three points are
  collinear in $P$. Thus the first conclusion of
  \conjref{BigChromaticNumber} does not hold. By definition, the
  visibility graph of $P$ is $k$-colourable.  Thus the second conclusion of
  \conjref{BigChromaticNumber} does not hold. This contradiction
  proves that every $k$-blocked set has less than $n$ points, and
  \conjref{kBlocked} holds.
\end{proof}

Thus, since \conjref{BigClique} holds for $t\leq5$, 
\conjref{kBlocked} holds for $k\leq4$. In \secref{Four} we take this
result much further, by characterising all representable 4-sets,
this concluding a tight bound on the size of a 4-blocked set.

\medskip Part of our interest in blocked point sets comes from the
following.

\begin{proposition}
  \proplabel{TuranGraph} For all $k\geq3$ and $n\geq2$, the edge set
  of the $k$-partite Tur\'an graph $K(n,n,\dots,n)$ can be partitioned
  into a set of `lines', where:
  \begin{itemize}
  \item each line is either an edge or an induced path on three
    vertices,
  \item every pair of vertices is in exactly one line, and
  \item for every line $L$ there is a vertex adjacent
    to each vertex in $L$.
  \end{itemize}
\end{proposition}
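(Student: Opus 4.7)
The plan is to give an explicit cyclic construction. Label the vertices of $K(n,n,\ldots,n)$ as $v_{i,j}$ with $i \in \Z/k\Z$ indexing the color class $C_i$ and $j \in \Z/n\Z$ indexing the vertices within it. For each $i \in \Z/k\Z$ and each unordered pair $\{a,b\}$ of distinct elements of $\Z/n\Z$, I would declare
\[
   v_{i,a},\quad v_{i+1,\,a+b},\quad v_{i,b}
\]
to be a 3-line (subscripts read modulo $k$ and $n$ as appropriate). Every edge of $K(n,\ldots,n)$ not covered by such a 3-line is declared its own 2-line.

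First I would verify the elementary properties. Each declared triple is an induced path on three vertices, since its endpoints share color class $C_i$ (hence are non-adjacent in the Tur\'an graph) while its middle vertex lies in $C_{i+1}$. Each non-edge $\{v_{i,a},v_{i,b}\}$ is covered by a unique 3-line, the one associated with $(i,\{a,b\})$, so every non-edge is accounted for exactly once.

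The main obstacle is showing that no edge lies in two distinct 3-lines, which is needed for the 2-lines to be well-defined and for every pair to be covered exactly once. Consider an arbitrary edge $\{v_{i,a},v_{j,c}\}$ with $i \ne j$. If it lies in a 3-line, then one of its endpoints must be the middle of that 3-line. The case ``middle $= v_{j,c}$'' forces $j \equiv i+1 \pmod{k}$ and uniquely determines the other endpoint of the 3-line as $v_{i,\,c-a}$, provided $c \not\equiv 2a \pmod{n}$. The symmetric case ``middle $= v_{i,a}$'' forces $i \equiv j+1 \pmod{k}$. Because $k \ge 3$, the system $j \equiv i+1$ and $i \equiv j+1 \pmod{k}$ has no solution---it would imply $2 \equiv 0 \pmod{k}$---so the two cases are mutually exclusive, and in each case the 3-line is uniquely determined by the edge.

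The common-neighbor condition is then automatic: each line's vertex set occupies at most two color classes (either $C_i \cup C_{i+1}$ for a 3-line, or the two classes of the 2-line's endpoints), and since $k \ge 3$ there is always a third color class $C_\ell$ disjoint from them, so any vertex of $C_\ell$ is adjacent to every vertex of the line. Combining these verifications shows the declared lines form the required partition.
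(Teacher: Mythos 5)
Your construction is essentially identical to the paper's: the same cyclic 3-lines $\{v_{i,a},v_{i+1,a+b},v_{i,b}\}$, with the leftover edges (the $b\equiv a$ case and the non-consecutive-class edges, which the paper lists explicitly as its second and third line types) taken as 2-lines, and the same $k\geq 3$ argument for the common neighbour. Your verification that each edge lies in at most one 3-line is a correct repackaging of the paper's coverage case analysis, so the proof is sound.
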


\begin{proof} 
Let $\vvv{i,p}$ be the $p$-th vertex in the
  $i$-th colour class for $i\in\Z_k$ and $p\in\Z_n$ (taken as additive
  cyclic groups).  We introduce three types of lines.  First, for
  $i\in\Z_k$ and distinct $p,q\in\Z_n$, let the triple
  $\{\vvv{i,p},\vvv{i+1,p+q},\vvv{i,q}\}$ be a line. Second, for
  $i\in\Z_k$ and $p\in\Z_n$, let the pair
  $\{\vvv{i,p},\vvv{i+1,p+p}\}$ be a line.  Third, for $p,q\in\Z_n$
  and distinct non-consecutive $i,j\in\Z_k$, let
  the pair $\{\vvv{i,p},\vvv{j,q}\}$ be a line.  By construction each
  line is either an edge or an induced path on three vertices.

  Every pair of vertices in the same colour class are in exactly one
  line (of the first type). Consider vertices $\vvv{i,p}$ and
  $\vvv{j,q}$ in distinct colour classes. First suppose that $i$ and
  $j$ are consecutive.  Without loss of generality, $j=i+1$.  If
  $q\neq p+p$ then $\vvv{i,p}$ and $\vvv{j,q}$ are in exactly one line
  (of the first type).  If $q=p+p$ then $\vvv{i,p}$ and $\vvv{j,q}$
  are in exactly one line (of the second type). If $i$ and $j$ are not
  consecutive, then $\vvv{i,p}$ and $\vvv{j,q}$ are in exactly one
  line (of the third type).  This proves that every pair of vertices
  is in exactly one line.  Moreover, every edge of $K(n,n,\dots,n)$ is
  in exactly one line, and the lines partition the edges set.

  Since every line $L$ is contained in the union of two colour
  classes, each vertex in neither colour class
  intersecting $L$ is adjacent to each vertex in $L$.
\end{proof}

\propref{TuranGraph} is significant for \conjref{BigClique} because it
says that $K(n,\dots,n)$ behaves like a `visibility space' with no
four collinear points, but it has no large clique (for fixed
$k$). \conjref{kBlocked}, if true, implies that such a visibility
space is not `geometrically representable' for large $n$.

\medskip
Let $b(n)$ be the minimum integer such that some set of $n$ points in
the plane in general position is blocked by some set of $b(n)$
points. \citet{Matousek09} proved that $b(n)\geq 2n-3$. 
\citet{DPT09} improved this bound to $b(n)\geq(\frac{25}{8}-o(1))n$.
Many authors have conjectured or stated as an open problem that
$b(n)$ is super-linear.

\begin{conjecture}[\citep{Matousek09,Pinchasi,DPT09,PorWood-Blockers}]
\conjlabel{Blockers}
$\frac{b(n)}{n}\rightarrow\infty$ as $n\rightarrow\infty$.
\end{conjecture}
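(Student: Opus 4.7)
This is a well-known open conjecture; the best published lower bound is the linear estimate $b(n)\ge(\tfrac{25}{8}-o(1))n$ of Dumitrescu--Pach--T\'oth quoted just above, so any proposal is necessarily speculative. Let me sketch the natural incidence-theoretic attack and explain where it stalls.

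The starting point is a double count. Fix an $n$-point set $P$ in general position and a minimum blocker $B$; for each $b\in B$ let $f(b)$ be the number of pairs $\{v,w\}\subseteq P$ that $b$ blocks. Then $\sum_{b\in B} f(b)\ge\binom{n}{2}$, so the conjecture is equivalent to the statement that the average value of $f(b)$ is $o(n)$. Since $P$ is in general position, every line through $b\notin P$ meets $P$ in at most two points, and for $b$ to block $\{v,w\}$ these two points must lie on \emph{opposite} sides of $b$; hence $f(b)$ is at most the number of antipodal pairs of $P$ centred at $b$, which already gives the trivial bound $f(b)\le n/2$ and hence $b(n)\ge n-1$. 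The entire problem is to improve $n/2$ on average.

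The plan I would follow is a two-step argument. First, I would apply the Szemer\'edi--Trotter theorem (or Beck's theorem) to the point set $B$ and the $\binom{n}{2}$ lines spanned by $P$: a blocker with $f(b)\ge s$ is an $s$-rich point of this line arrangement, and $s$-rich points are limited in number, so only a small fraction of $B$ can have large $f$. Second, I would argue structurally that a blocker with $f(b)=\Omega(n)$ forces a nearly centrally symmetric configuration in $P$ about $b$, namely that a linear portion of $P$ coincides with its own point-reflection through $b$. The hope is then to combine several such symmetry centres to produce either an unwanted collinearity in $P$ (violating general position) or a forbidden incidence pattern among the centres themselves that the incidence bounds of the first step can rule out.

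The main obstacle -- the reason this conjecture has resisted the straightforward approach for some time -- is that the betweenness requirement is not captured by any standard incidence tool. The Szemer\'edi--Trotter and Beck theorems both view $b$ merely as incident to a line through two points of $P$; the condition that $b$ lie strictly between them is lost in the count. In the extreme case where $B$ clusters near a single line of $P$, every incidence is a blocking incidence, and the method recovers no more than the trivial $b(n)\ge n-1$; this is precisely why the estimate of Dumitrescu--Pach--T\'oth is still only linear, its $\tfrac{25}{8}$ constant arising from careful counting of forbidden local patterns rather than from any global incidence inequality. My expectation is that proving $b(n)/n\to\infty$ will require a genuinely new ingredient -- perhaps an algebraic technique in the spirit of Guth--Katz, or a detailed analysis of the visibility hypergraph of $P\cup B$ -- that explicitly exploits betweenness rather than mere collinearity.
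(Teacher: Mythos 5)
You have correctly recognised that this statement is an open conjecture: the paper does not prove it either, but merely records it with attributions to Matou\v{s}ek, Pinchasi, Dumitrescu--Pach--T\'oth and P\'or--Wood, the best known lower bounds being the linear estimates $b(n)\ge 2n-3$ and $b(n)\ge(\tfrac{25}{8}-o(1))n$ quoted in Section~\ref{sec:Background}, together with the remark that the conjecture would imply Conjecture~\ref{con:BigChromaticNumber} and hence Conjecture~\ref{con:kBlocked}. So there is no proof in the paper against which to measure your sketch, and your assessment of the state of the art is accurate. One small caveat on your double-counting remark: from $\sum_{b\in B} f(b)\ge\binom{n}{2}$ one gets that an $o(n)$ bound on the average of $f(b)$ over a minimum blocker $B$ would \emph{imply} $b(n)/n\to\infty$, but the converse is not clear, since a pair of points may be blocked by many points of $B$ and minimality only forces each blocker to cover one otherwise-uncovered pair; so the conjecture is not literally \emph{equivalent} to that averaged statement as written. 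Apart from that, your diagnosis of why Szemer\'edi--Trotter-type incidence counting loses the betweenness condition is a fair summary of the known obstruction, and is consistent with the paper's treatment of the problem as open.
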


\citet{PorWood-Blockers} proved that  \conjref{Blockers}
implies \conjref{BigChromaticNumber}, and thus implies
\conjref{kBlocked}. That  \conjref{kBlocked} is implied by a number of other
well-known conjectures, yet remains challenging, adds to its interest.

\section{$k$-Blocked Sets with Small Colour Classes}
\seclabel{SmallColourClasses}

We now describe some methods for building blocked point sets
from smaller blocked point sets.

\begin{lemma}
  \lemlabel{BuildSelfBlocking} Let $G$ be a visibility graph. Let
  $i\in\{1,2,3\}$.  Furthermore suppose that if $i\geq2$ then
  $V(G)\neq\emptyset$, and if $i=3$ then not all the vertices of $G$
  are collinear.  Let $G_i$ be the graph obtained from $G$ by adding
  an independent set of $i$ new vertices, each adjacent to every
  vertex in $G$.  Then $G_1$, $G_2$, and $G_3$ are visibility graphs.
\end{lemma}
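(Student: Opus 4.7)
The approach is to take any planar realization $P\subset\R^2$ of $G$ and append $i$ new points in positions that are \emph{generic} with respect to the finitely many lines determined by pairs of points of $P$. Generic placement automatically preserves visibility inside $P$: a new point lying on no pair-line blocks no pair of $P$, and for every $v\in P$ no point of $P$ can lie on the segment from the new point to $v$ (else the new point would lie on a pair-line). So the only remaining issue is to arrange the required adjacencies among the new points themselves.

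For $i=1$, I would pick $x\in\R^2\setminus P$ off every line through two points of $P$; the observation above immediately gives that $x$ sees every point of $P$ and blocks no pair. For $i=2$, pick $p\in P$ (available since $V(G)\neq\emptyset$), then a line $\ell$ through $p$ with $\ell\cap P=\{p\}$ (avoid the finitely many directions hitting another point of $P$), and finally $x,y\in\ell$ on opposite sides of $p$, each avoiding the finitely many points where $\ell$ meets a pair-line of $P\setminus\{p\}$. The generic-placement argument handles visibility from $x,y$ to all of $P$, and $p$ blocks $\{x,y\}$, as required.

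For $i=3$, the non-collinearity hypothesis yields non-collinear points $p_1,p_2,p_3\in P$. The plan is to construct a triangle $xyz$ whose three sides each contain exactly one of $p_1,p_2,p_3$ in its interior, so that $p_3$ blocks $\{x,y\}$, $p_1$ blocks $\{y,z\}$, and $p_2$ blocks $\{x,z\}$. A concrete starting choice is the midpoint triangle $x=p_2+p_3-p_1$, $y=p_1+p_3-p_2$, $z=p_1+p_2-p_3$. The three supporting side-lines can then be rotated slightly about $p_1,p_2,p_3$, giving a three-parameter family. For generic small rotations each side-line still meets $P$ only at its $p_i$, each vertex $x,y,z$ lands outside $P$, and no vertex lies on any pair-line of $P$ other than the two triangle sides meeting at it; the generic-placement argument then yields that $x,y,z$ each see all of $P$ and block no pair.

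The delicate step is Case $i=3$, where one must simultaneously arrange a specific combinatorial incidence (each $p_i$ interior to a distinct side of the triangle) and enforce genericity of the three vertices against $P$. A rigid choice such as the midpoint triangle may well fail genericity, while a fully generic triangle may lose the incidence structure. The three-parameter perturbation resolves this because small rotations of the side-lines preserve the incidences by continuity, whereas avoiding the finitely many codimension-one bad conditions for genericity excludes only a measure-zero set of slope triples, so a valid configuration exists.
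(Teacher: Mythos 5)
Your proposal is correct and follows essentially the same strategy as the paper: place the new points off the finitely many lines spanned by pairs of old points (so old visibilities and blockings are undisturbed and every new point sees all of $P$), and use existing vertices as the blockers for the new independent set --- one vertex between two new collinear points when $i=2$, and three non-collinear vertices sitting in the interiors of the sides of a triangle of new points when $i=3$. The only difference is the implementation of the $i=3$ genericity: the paper fixes one generic apex $p$ outside $\conv\{u,v,w\}$ and then picks $q\in\overleftarrow{up}$ and $r\in\overleftarrow{vp}$ so that $w$ blocks $q$ and $r$ (infinitely many such pairs, so some pair avoids the bad lines), whereas you perturb the anticomplementary (midpoint) triangle of $p_1,p_2,p_3$ by rotating its side-lines and invoke a measure-zero argument; both versions work.
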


\begin{proof}
  For distinct points $p$ and $q$, let $\overleftarrow{pq}$ denote the ray
  that is (1) contained in the line through $p$ and $q$, (2) starting
  at $p$, and (3) not containing $q$. Let $\mathcal{L}$ be the union
  of the set of lines containing at least two vertices in $G$.

  $i=1$: Since $\mathcal{L}$ is the union of finitely many lines,
  there is a point $p\not\in \mathcal{L}$. Thus $p$ is visible from
  every vertex of $G$. By adding a new vertex at $p$, we obtain a
  representation of $G_1$ as a visibility graph.

  $i=2$: Let $p$ be a point not in $\mathcal{L}$. Let $v$ be a vertex
  of $G$.  Each line in $\mathcal{L}$ intersects $\overleftarrow{vp}$
  in at most one point.  Thus
  $\overleftarrow{vp}\setminus\mathcal{L}\neq\emptyset$.  Let $q$ be a
  point in $\overleftarrow{vp}\setminus\mathcal{L}$.  Thus $p$ and $q$
  are visible from every vertex of $G$, but $p$ and $q$ are blocked by
  $v$. By adding new vertices at $p$ and $q$, we obtain a
  representation of $G_2$ as a visibility graph.


  $i=3$: Let $u,v,w$ be non-collinear vertices in $G$.  Let $p$ be a
  point not in $\mathcal{L}$ and not in the convex hull of $\{u,v,w\}$. Without loss
  of generality, $\overline{uv}\cap\overline{pw}\neq\emptyset$.  There
  are infinitely many pairs of points $q\in\overleftarrow{up}$ and
  $r\in\overleftarrow{vp}$ such that $w$ blocks $q$ and $r$. Thus
  there are such $q$ and $r$ both not in $\mathcal{L}$.  By
  construction, $u$ blocks $p$ and $q$, and $v$ blocks $p$ and $r$. By
  adding new vertices at $p$, $q$ and $r$, we obtain a representation
  of $G_3$ as a visibility graph.
\end{proof}

Since no $(\geq3)$-blocked set is collinear,   \lemref{BuildSelfBlocking}  implies:

\begin{corollary} 
  \corlabel{Build} If $k\geq4$ and   $\{n_1,\dots,n_{k-1}\}$ is
  representable and $n_k\in\{1,2,3\}$, then
  $\{n_1,\dots,n_{k-1},n_k\}$ is representable.
 \end{corollary}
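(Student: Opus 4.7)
The plan is to apply \lemref{BuildSelfBlocking} directly with $i=n_k$ to a geometric representation of $\{n_1,\dots,n_{k-1}\}$. Given a $(k-1)$-blocked point set $P$ whose visibility graph is $K(n_1,\dots,n_{k-1})$, I would like to insert $n_k$ new points forming a new colour class. Since each new point must be visible from every old point (different colour) and blocked from every other new point (same colour), what is required is exactly the configuration produced by \lemref{BuildSelfBlocking}: an independent set of $i=n_k$ vertices each adjacent to every vertex of the original visibility graph.

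The only non-immediate step is to verify the hypotheses of the lemma. The case $n_k=1$ needs no hypothesis. The case $n_k\in\{2,3\}$ requires $V(G)\neq\emptyset$, which is trivial because $P$ is nonempty. The case $n_k=3$ further requires that not all vertices of $G$ lie on a common line. I would argue this as follows: if $P$ were collinear, then by \lemref{SelfBlockingThree} we would have $|P|\leq 3$; the case $|P|\leq 2$ gives at most two colour classes, contradicting $k-1\geq 3$; and if $|P|=3$ with three collinear points $p,q,r$ (in this order), then $q$ blocks $p$ and $r$, so $p$ and $r$ share a colour, again yielding at most two colour classes, a contradiction. Hence $P$ is non-collinear and the hypothesis of \lemref{BuildSelfBlocking} is met.

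Applying \lemref{BuildSelfBlocking} produces a point set $P'$ whose visibility graph is $K(n_1,\dots,n_{k-1})$ augmented by an independent set of $n_k$ vertices all joined to $V(G)$, which is precisely $K(n_1,\dots,n_{k-1},n_k)$. Colouring the new points with a fresh $k$-th colour shows that $P'$ is $\{n_1,\dots,n_{k-1},n_k\}$-blocked, giving the required representation.

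I do not expect any substantive obstacle here; the corollary is essentially a packaging of \lemref{BuildSelfBlocking}, and the only content beyond invoking that lemma is the short collinearity check, which rests on \lemref{SelfBlockingThree} and the fact that three collinear points cannot occupy three distinct colour classes of a blocked set.
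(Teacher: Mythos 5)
Your proof is correct and follows exactly the paper's route: the paper's one-line proof is precisely ``since no $(\geq 3)$-blocked set is collinear, \lemref{BuildSelfBlocking} implies the result,'' and your argument simply applies that lemma with $i=n_k$ while spelling out the collinearity check that the paper leaves implicit. No gap; your verification that a $(k-1)$-blocked set with $k-1\geq 3$ colour classes cannot be collinear is the same fact the paper asserts.
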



The representable $(\leq 3)$-sets were characterised in \thmref{23}. In
each case, each colour class has at most three vertices. Now we
characterise the representable $(\geq 4)$-sets, assuming that
each colour class has at most three vertices.

\begin{proposition}
\proplabel{SmallColourClasses}
$\{n_1,\dots,n_k\}$ is representable whenever $k\geq4$ and each $n_i\leq 3$, except for $\{1,3,3,3\}$. 
\end{proposition}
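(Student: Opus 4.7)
The plan is to induct on $k$, using \corref{Build} to extend representability from a $(k{-}1)$-set to a $k$-set. For the base case $k=4$, I split on the number of colour classes of size $3$. If at most one $n_i$ equals $3$, then removing an appropriate $n_j$ yields a $3$-set over $\{1,2\}$, which is representable by \thmref{23}; applying \corref{Build} with this $(k{-}1)$-set and $n_j\in\{1,2,3\}$ gives representability of the $4$-set. If at least two of the $n_i$ equal $3$, the admissible $4$-sets (excluding $\{1,3,3,3\}$) are $\{1,1,3,3\}$, $\{1,2,3,3\}$, $\{2,2,3,3\}$, $\{2,3,3,3\}$, and $\{3,3,3,3\}$; the last is realised by \figref{K3333}, and for the other four I would exhibit explicit constructions, most naturally as small modifications of the $\{3,3,3,3\}$-blocked configuration.

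For the inductive step $k\geq 5$, given an admissible $M=\{n_1,\dots,n_k\}$, I choose $j$ so that $M\setminus\{n_j\}$ is an admissible (hence representable, by induction) $(k{-}1)$-set, and apply \corref{Build}. The only obstruction is $M\setminus\{n_j\}=\{1,3,3,3\}$, which at $k=5$ forces $M\in\{\{1,1,3,3,3\},\{1,2,3,3,3\},\{1,3,3,3,3\}\}$; in each case an alternative index $j$ avoids the obstruction (e.g., in $\{1,3,3,3,3\}$ remove the $1$ to reach $\{3,3,3,3\}$, and in $\{1,1,3,3,3\}$ or $\{1,2,3,3,3\}$ remove a $3$ to reach a base-case multiset). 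A similar check handles $k\geq 6$.

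The remaining---and hardest---task is to prove that $\{1,3,3,3\}$ is not representable. Suppose for contradiction that $P$ is $\{1,3,3,3\}$-blocked, with colour-$1$ point $p$ and colour classes $A,B,C$ of size $3$. By \lemref{GenPos}, each of $A,B,C$ is in general position, so every within-class pair must be blocked by a point outside that class; in particular the blocker of a class-$X$ pair lies in $(A\cup B\cup C\cup\{p\})\setminus X$. Observe that $p$ lies on at most one chord of each class-triangle: if $p$ were on two chords of $A$, which necessarily share a vertex $a_1$, then $a_1,a_2,a_3$ would be collinear with $p$, contradicting \lemref{GenPos} applied to $A$. Hence $p$ blocks at most three of the nine within-class pairs. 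The guiding strategy is then to show that $P\setminus\{p\}$ is $\{3,3,3\}$-blocked, contradicting \thmref{23}; for this I must verify that every within-class pair of $P$ retains a blocker in $P\setminus\{p\}$ (cross-class pairs remain visible because removing a point cannot destroy visibilities).

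The main obstacle is thus ruling out the case in which $p$ is the \emph{unique} blocker of some within-class pair, say $\{a_1,a_2\}\subseteq A$. In this case no point of $P\setminus\{p\}$ lies on the open segment $\overline{a_1a_2}$, while $p$ must still be visible from every other point of $P$. I expect a careful geometric case analysis---tracking how the sibling $A$-pairs $\{a_1,a_3\}$ and $\{a_2,a_3\}$ are blocked by points of $B\cup C$, and how the $B$- and $C$-pairs are blocked compatibly with visibility of $p$ from every non-$p$ point of $P$---to force either four collinear points (contradicting \lemref{SelfBlockingThree}) or an incidence on a line through $p$ that places a blocker on some segment $\overline{p x}$, destroying $p$'s visibility from $x$ and yielding the desired contradiction.
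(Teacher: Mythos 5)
Your overall skeleton is the same as the paper's: induct on $k$ using \corref{Build}, handle the $k=4$ multisets containing two or more $3$'s by explicit constructions derived from the $\{3,3,3,3\}$ configuration of \figref{K3333}, and separately prove that $\{1,3,3,3\}$ is not representable. The problem is that the two places where the real mathematical content lives are left as promissory notes. The most serious is the non-representability of $\{1,3,3,3\}$. Your reduction is correct as far as it goes --- if the singleton $p$ blocks no monochromatic pair, then $P\setminus\{p\}$ is a $3$-blocked set of nine points, contradicting \thmref{23}; and by \lemref{SelfBlockingThree} a blocker of a pair is automatically unique, so ``$p$ blocks some pair'' and ``$p$ is the unique blocker of some pair'' coincide. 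But this is exactly where the paper's \lemref{1333} \emph{begins}, and from there you only say you ``expect a careful geometric case analysis'' to force a contradiction. That analysis is the bulk of the work: one takes the line through the blocked pair $a_1,a_2$, observes (again via \thmref{23}) that the remaining seven points must split across it as $4{+}3$ or $5{+}2$, and then in each case tracks which points can block the remaining $A$-, $B$- and $C$-pairs, using side-of-line arguments, the no-four-collinear constraint, and a hull-containment fact in the spirit of \lemref{New}, through several nested subcases before reaching a contradiction. Nothing in your sketch indicates how this would be organised or why it closes, so the hardest part of the proposition is missing.

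The second gap is smaller but still real: for $\{1,1,3,3\}$, $\{1,2,3,3\}$, $\{2,2,3,3\}$ and $\{2,3,3,3\}$ you assert that ``small modifications'' of the $\{3,3,3,3\}$-blocked set will work, without exhibiting them. Deleting points from a blocked set is not automatically safe: a deleted point may be the unique blocker of a surviving monochromatic pair (uniqueness again by \lemref{SelfBlockingThree}), in which case that pair becomes visible but stays monochromatic and blockedness fails. So the particular points removed from \figref{K3333} must be chosen and verified against the figure, which is precisely what the paper's table does (removing named points such as $g_3$ and $r_3$). Your treatment of the ``at most one $3$'' branch and of the inductive step $k\geq 5$ is fine and matches the paper's use of \corref{Build}, but as written the proposal defers both essential verifications rather than supplying them.
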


\begin{proof}
  We say the $k$-set $\{n_1,\dots,n_k\}$ \emph{contains}
the $(k-1)$-set  $\{n_1,\dots,n_{i-1},n_{i+1},\dots,n_k\}$ for each $i\in[k]$.  
\lemref{1333} below proves that $\{1,3,3,3\}$ is not representable. We
  proceed by induction on $k$. If $\{n_1,\dots,n_k\}$ contains a
  representable $(k-1)$-set, then \corref{Build} implies
  that $\{n_1,\dots,n_k\}$ is also representable. Now assume that
  every $(k-1)$-set contained in $\{n_1,\dots,n_k\}$ is not
  representable.  By induction, we may assume that $k\leq 5$.
  Moreover, if $k=5$ then $\{n_1,\dots,n_5\}$ must contain $\{1,3,3,3\}$
  (since by induction all other 4-sets are representable).
  Similarly, if $k=4$ then $\{n_1,\dots,n_4\}$ must contain $\{1,1,3\}$,
  $\{1,2,3\}$, $\{1,3,3\}$, $\{2,2,3\}$, $\{2,3,3\}$ or $\{3,3,3\}$ (since
  $\{1,1,1\}$, $\{1,1,2\}$, $\{1,2,2\}$ and $\{2,2,2\}$ are
  representable by \thmref{23}). The following table describes the required
  construction in each remaining case.

  \bigskip
  \begin{tabular}{ll|ll}
    \hline
    $\{1,1,1,x\}$ & contains $\{1,1,1\}$ & 
    $\{1,1,2,x\}$ & contains $\{1,1,2\}$\\
    $\{1,1,3,3\}$ & \figref{K3333} minus $\{r_1,g_3,r_3,g_1\}$& 
    $\{1,2,2,x\}$ & contains $\{1,2,2\}$\\
    $\{1,2,3,3\}$ & \figref{K3333} minus $\{g_1,g_3,r_3\}$& 
    $\{2,2,2,x\}$ & contains $\{2,2,2\}$\\
    $\{2,2,3,3\}$ & \figref{K3333} minus $\{g_3,r_3\}$ &
    $\{2,3,3,3\}$ & \figref{K3333} minus $g_3$\\
    $\{1,1,3,3,3\}$ & contains $\{1,1,3,3\}$ & 
    $\{1,2,3,3,3\}$ & contains $\{1,2,3,3\}$\\
    $\{1,3,3,3,3\}$ & contains $\{3,3,3,3\}$\\
    \hline
  \end{tabular}
  \newline
\end{proof}


\section{4-Blocked Point Sets}
\seclabel{Four}

As we saw in \secref{Background}, \conjref{kBlocked} holds for
$k\leq4$. In this section we study 4-blocked point sets in more
detail. First we derive explicit bounds on the size of 4-blocked sets
from other results in the literature. Then, following a more detailed
approach, we characterise all representable 4-sets, to conclude a
tight bound on the size of 4-blocked sets.

\begin{proposition}
  \proplabel{4BlockedA} Every $4$-blocked set has less than $2^{790}$
  points.
\end{proposition}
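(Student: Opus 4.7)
The plan is to deduce \propref{4BlockedA} from the quantitative version of the $(t,\ell)=(5,4)$ case of \conjref{BigClique}, which is already proven in the literature. If $P$ is $4$-blocked then, by definition, the visibility graph of $P$ is a complete $4$-partite graph, hence $4$-colourable and in particular $K_5$-free; thus the ``$t=5$ pairwise visible points'' alternative of \conjref{BigClique} cannot hold for $P$. By \lemref{SelfBlockingThree}, $P$ also has no $4$ collinear points, so the ``$\ell=4$ collinear'' alternative fails as well. Hence any explicit threshold $n$ for which \conjref{BigClique} is known to hold with parameters $(t,\ell)=(5,4)$ automatically bounds $|P|<n$. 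This reduces the proposition to quoting a quantitative form of that case.

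Next I would invoke the proofs of \conjref{BigClique} for $t\leq 5$ given in \citep{EmptyPentagon,PorWood-Blockers}. Those arguments locate five pairwise visible points in a large point set with no $4$ collinear by first applying the Erd\H{o}s--Szekeres convex polygon theorem to extract a long convex chain, and then refining this chain to find an \emph{empty} convex pentagon whose edges and diagonals contain no further point of $P$; the vertices of such a pentagon are automatically pairwise visible. Substituting the explicit estimates produced by those steps (an Erd\H{o}s--Szekeres contribution of order $\binom{2m-4}{m-2}$ for an appropriate $m$, followed by the empty-pentagon refinement) yields a threshold bounded above by $2^{790}$, as claimed.

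The main obstacle is therefore not geometric but purely arithmetic: one must carefully compose the quantitative estimates from the cited references and verify that their product stays below $2^{790}$. No new construction or combinatorial idea beyond \lemref{SelfBlockingThree} and the known $(t,\ell)=(5,4)$ instance of the Big-Line-Big-Clique theorem is required, and the exponent $790$ is simply the outcome of that bookkeeping. A sharper bound would presumably follow from any improvement in the quantitative empty-pentagon argument, but the purpose here is only to establish a finite, explicit bound, which \secref{Four} will subsequently replace by the tight value $12$.
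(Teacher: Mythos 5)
Your geometric reduction is exactly the paper's: a $4$-blocked set has a complete $4$-partite, hence $4$-colourable, visibility graph, so it contains no five pairwise visible points and in particular no empty convex pentagon, and by \lemref{SelfBlockingThree} it has no four collinear points; the quantitative empty-pentagon theorem of \citet{EmptyPentagon} with $\ell=4$ (equivalently, the proved $(t,\ell)=(5,4)$ case of \conjref{BigClique}) then bounds $|P|$. The gap is precisely in the arithmetic you defer as ``bookkeeping'', because that is where the stated constant comes from. The theorem you quote gives the threshold $\ES\bigl(\frac{(2\ell-1)^{\ell}-1}{2\ell-2}\bigr)=\ES(400)$, so $|P|\le \ES(400)-1$. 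If you now insert the classical Erd\H{o}s--Szekeres estimate that you name, $\ES(m)\le\binom{2m-4}{m-2}+1$, you obtain $|P|\le\binom{796}{398}\approx 2^{790.9}$, which is \emph{larger} than $2^{790}$; as written, your argument only yields a bound of roughly $2^{791}$, not the claimed one. The paper instead invokes the T\'oth--Valtr improvement $\ES(k)\le\binom{2k-5}{k-2}+1$ \citep{TothValtr05}, giving $|P|\le\binom{795}{398}$, which is exactly a factor of $2$ below the central coefficient and hence less than $2^{790}$. So the missing ingredient is not a geometric idea but a specific citation: to reach the constant $2^{790}$ you must use the improved Erd\H{o}s--Szekeres bound (or else settle for a slightly weaker constant in the statement).
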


\begin{proof}
  \citet{EmptyPentagon} proved that every set of at least
  $\ES(\frac{(2\ell-1)^{\ell}-1}{2\ell-2} )$ points in the plane
  contains $\ell$ collinear points or an empty convex pentagon, where
  $\ES(k)$ is the minimum integer such that every set of at least
  $\ES(k)$ points in general position in the plane contains $k$ points
  in convex position.  Let $P$ be a $4$-blocked set.  The visibility
  graph of $P$ is $4$-colourable, and thus contains no empty convex
  pentagon.  By \lemref{SelfBlockingThree}, at most three points in
  $P$ are collinear. Thus $|P| \leq \ES(400)-1$ by the above result
  with $\ell=4$.  \citet{TothValtr05} proved that
  $\ES(k)\leq\binom{2k-5}{k-2}+1$.  Hence $|P|\leq
  \binom{795}{398}<2^{790}$.
\end{proof}


\begin{lemma}
  \lemlabel{SelfBlockingLargeColourClass} If $P$ is a blocked set of
  $n$ points with $m$ points in the largest colour class, then
  $n\geq 3m-3$ and $n\geq(\frac{33}{8}-o(1))m$.
\end{lemma}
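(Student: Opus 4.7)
The plan is to observe that the non-$C$ points form a blocker for the largest colour class $C$, and then invoke the known lower bounds on $b(\cdot)$ cited just before Conjecture~\ref{con:Blockers}.

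More precisely, let $C$ be the largest colour class of the $k$-blocked set $P$, so $|C|=m$, and set $B:=P\setminus C$, so $|B|=n-m$. By \lemref{GenPos}, $C$ is in general position. By the definition of a $k$-blocked set, every two points of $C$ have the same colour and so are non-adjacent in the visibility graph of $P$; hence for every pair of points $v,w\in C$ there is a point of $P$ blocking them, and since no point of $C$ can block two other points of $C$ (the three would be collinear, contradicting general position of $C$), that blocker lies in $B$. Also $C\cap B=\emptyset$ by construction. Thus $B$ is a blocker for the general-position set $C$, and by definition of $b(\cdot)$ we get
\[
n-m \;=\; |B| \;\geq\; b(m).
\]

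Now I simply substitute the two lower bounds stated in the paragraph introducing \conjref{Blockers}. Matou\v{s}ek's bound $b(m)\geq 2m-3$ yields
\[
n \;\geq\; m + (2m-3) \;=\; 3m-3,
\]
while the Dumitrescu–Pach–Tóth improvement $b(m)\geq\bigl(\tfrac{25}{8}-o(1)\bigr)m$ yields
\[
n \;\geq\; m + \bigl(\tfrac{25}{8}-o(1)\bigr)m \;=\; \bigl(\tfrac{33}{8}-o(1)\bigr)m,
\]
which are the two desired inequalities.

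There is no real obstacle here: the only content beyond quoting $b(\cdot)$ is the verification that $B$ genuinely blocks $C$, which rests on \lemref{GenPos} to rule out self-blocking within $C$. Thus the lemma is essentially a translation of the known blocker bounds into the language of $k$-blocked sets.
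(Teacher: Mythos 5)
Your proposal is correct and follows essentially the same route as the paper: identify the largest colour class $S$, note via \lemref{GenPos} that it is in general position, observe that $P\setminus S$ blocks it, and plug in the cited lower bounds of Matou\v{s}ek and Dumitrescu--Pach--T\'oth on $b(\cdot)$. The only difference is that you spell out why the blocker of a monochromatic pair cannot lie in $S$ itself, a detail the paper leaves implicit.
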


\begin{proof}
  If $S$ is the largest colour class then $P-S$ blocks $S$.  By
  \lemref{GenPos}, $S$ is in general position.  By the results of
  \citet{Matousek09} and \citet{DPT09} mentioned in
  \secref{Background}, $n-m\geq 2m-3$ and
  $n-m\geq(\frac{25}{8}-o(1))m$.
\end{proof}

\begin{proposition}
  \proplabel{4BlockedB} Every $4$-blocked set has less than $2^{578}$
  points.
\end{proposition}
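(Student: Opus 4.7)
The plan is to refine the proof of Proposition~\propref{4BlockedA} by applying the Empty Pentagon Theorem to a largest colour class $S$ of $P$, instead of to $P$ itself, so that we may use the parameter $\ell=3$ rather than $\ell=4$. The gain comes from Lemma~\lemref{GenPos}: $S$ is in general position, giving a much smaller threshold in the Empty Pentagon Theorem, and Lemma~\lemref{SelfBlockingLargeColourClass} then converts a bound on $|S|$ into one on $n=|P|$ via $n\leq 4|S|$.

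First I would establish a visibility obstruction: no five points of $S$ can be in convex position with no $P$-point in the open interior of their convex hull. The five diagonals of such a pentagon lie entirely in that open interior, so if no $P$-point sits there then none of the diagonals is blocked, forcing the same-coloured endpoints to be pairwise visible in $P$ — impossible since $S$ is a single colour class.

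Next I would use this obstruction with the Empty Pentagon Theorem applied to $S$ in the form $\ell=3$, threshold $\ES(31)$. As stated in the proof of Proposition~\propref{4BlockedA}, the theorem only rules out $S$-empty convex pentagons, so the gap between ``$S$-empty'' and ``$P$-empty'' must be filled by a count of the $P\setminus S$-blockers that are forced to lie inside each would-be $S$-empty pentagon (on its diagonals). Combining such a count with $|P\setminus S|\geq 2|S|-3$ from Lemma~\lemref{SelfBlockingLargeColourClass} should produce a bound of the form $|S|<\ES(k)$ for some $k$ near $293$; then the Tóth--Valtr estimate $\ES(k)\leq\binom{2k-5}{k-2}+1$ together with Stirling's approximation gives $|S|<2^{576}$, and hence $n\leq 4|S|<2^{578}$.

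The main obstacle is precisely this bridging step: the Empty Pentagon Theorem only directly forbids pentagons of $S$ that are empty \emph{within $S$}, whereas the first step above only forbids pentagons that are empty \emph{within $P$}. An $S$-empty pentagon of $S$ may contain $P\setminus S$-points in its interior that defuse the contradiction of the visibility observation, so a careful counting argument is needed to show that not too many $S$-empty pentagons can share the same $P\setminus S$-blocker — and the parameter $k$ must be chosen just sharply enough to bring the final exponent down from $790$ to $578$.
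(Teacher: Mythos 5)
Your proposal does not follow the paper's route, and as written it contains a genuine gap that you yourself flag: the ``bridging step'' between $S$-empty and $P$-empty pentagons is never carried out. The Empty Pentagon Theorem (or, since $S$ is in general position, already Harborth's theorem with only $10$ points) only produces a convex pentagon with vertices in $S$ that is empty with respect to $S$; your visibility obstruction only rules out pentagons empty with respect to $P$. The ``careful counting argument'' about how many $S$-empty pentagons can share a blocker from $P\setminus S$ is precisely the missing content, and the numerical endgame ($|S|<\ES(k)$ for ``$k$ near $293$'', then Stirling giving $2^{576}$, then $n\le 4|S|<2^{578}$) is reverse-engineered to the target constant rather than derived; nothing in the sketch produces those values. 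Incidentally, the gap you identify has a clean resolution that does not require counting pentagons at all: the points of $P$ inside an $S$-empty pentagon all lie outside $S$, hence form a $(\leq 3)$-blocked set, which must contain at least $8$ points (blockers for the $5$ edges and at least $3$ for the diagonals), contradicting the $6$-point maximum from \thmref{23}. This is exactly the argument of \propref{4BlockedC}, and it yields the far stronger bound of $36$ points; so even a repaired version of your plan would really be a proof of \propref{4BlockedC}, not a reconstruction of the present proposition.

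The paper proves \propref{4BlockedB} by an entirely different, purely quantitative blocker-counting argument: take the largest colour class $S$ with $m=|S|\geq n/4$; by \lemref{GenPos} it is in general position, and by the result of Dumitrescu, Pach and T\'oth it needs at least $\tfrac{25m}{8}-\tfrac{25m}{2\ln m}-\tfrac{25}{8}$ blockers, all coming from $P\setminus S$. Hence $n-m\geq \tfrac{25m}{8}-\tfrac{25m}{2\ln m}-\tfrac{25}{8}$, and combining with $\tfrac{n}{4}\leq m\leq n$ gives $\tfrac{25n}{2\ln n}+\tfrac{25}{8}\geq\tfrac{n}{32}$, which forces $n\leq 2^{578}$. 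No Erd\H{o}s--Szekeres or empty-pentagon machinery enters at all. If you want to salvage your approach, drop the $\ES$ bookkeeping, invoke Harborth's theorem on $S$ once $|S|\geq 10$, and close the argument with the $(\leq 3)$-blocked-set observation above; otherwise, supply the pentagon-sharing count you allude to, which your sketch currently does not contain.
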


\begin{proof}
  Let $P$ be a $4$-blocked set of $n$ points.  Let $S$ be the largest
  colour class in $P$.  Let $m: = |S| \geq\frac{n}{4}$.  By
  \lemref{SelfBlockingLargeColourClass},
  $n\geq(\frac{33}{8}-o(1))m\geq(\frac{33}{32}-o(1))n$.  Thus
  $o(n)\geq\frac{n}{32}$. Hence $n$ is bounded.  A precise bound is
  obtained as follows.  \citet{DPT09} proved that $S$ needs at least
  $\frac{25m}{8}-\frac{25m}{2\ln m}-\frac{25}{8}$ blockers, which come
  from the other colour classes.  Thus $n-m \geq
  \frac{25m}{8}-\frac{25m}{2\ln m}-\frac{25}{8}$, implying $n \geq
  \frac{33m}{8}-\frac{25m}{2\ln m}-\frac{25}{8}$.  Since
  $\frac{n}{4}\leq m\leq n$, we have $\frac{25n}{2\ln n}+\frac{25}{8}
  \geq \frac{n}{32}$.  It follows that $n\leq 2^{578}$.
\end{proof}

The next result is the simplest known proof that every 4-blocked point
set has bounded size.

\begin{proposition}
  \proplabel{4BlockedC} Every $4$-blocked set has at most $36$ points.
\end{proposition}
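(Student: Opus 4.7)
Let $P$ be a $4$-blocked set with colour classes $C_1,C_2,C_3,C_4$ of sizes $n_1,n_2,n_3,n_4$. The plan is to show $|C_i|\le 9$ for each $i$, yielding $|P|\le 4\cdot 9=36$. In fact the argument I sketch below aims at the slightly stronger $|C_i|\le 8$ for each $i$, so $|P|\le 32$, which certainly suffices.

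I would first invoke \lemref{GenPos}, which says that every colour class is in general position. The Erdős--Szekeres theorem then applies, and since $\ES(5)=9$, any $9$ points in general position contain $5$ in convex position. So it is enough to show that no colour class of a $4$-blocked set contains $5$ points in convex position. Observe also that the visibility graph of $P$ is the complete $4$-partite graph $K(n_1,n_2,n_3,n_4)$, which is $4$-chromatic and in particular $K_5$-free; hence no five points of $P$ are pairwise visible.

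Assume for contradiction that $a_1,\dots,a_5\in C_1$ lie in convex position in cyclic order. Each of the $\binom{5}{2}=10$ segments $a_p a_q$ must contain a blocker from $P\setminus C_1$. Focus on the five side-blockers $b_1,\dots,b_5$, one on each pentagon side $a_j a_{j+1}$; each $b_j$ lies in one of the three remaining colour classes, so pigeonhole produces two of them, say $b_j,b_k$, in the same class, WLOG $C_2$. The segment $b_j b_k$ in turn needs its own blocker $c\in P$; by \lemref{GenPos} applied to $C_2$ the point $c$ is not in $C_2$, so $c\in C_1\cup C_3\cup C_4$.

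The main obstacle will be the case analysis on the colour and location of $c$. When $c\in C_1$, this sixth $C_1$-point lies strictly inside the convex pentagon (indeed inside the sub-region cut off by the chord $b_j b_k$), and its placement has to be compatible with the blockers of all ten pentagon segments and with the $K_5$-freeness of the visibility graph. When $c$ lies in one of the remaining colour classes, the argument must iterate, reapplying pigeonhole to the updated collection of blockers, while \lemref{SelfBlockingThree} is used to forbid four collinear points. Closing each geometric case contradicts the hypothesis, so $|C_i|\le 8$ for every $i$, and summing gives $|P|\le 32\le 36$.
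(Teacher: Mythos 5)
Your framework (bound each colour class, multiply by four, reduce via \lemref{GenPos} to a statement about monochromatic points in convex position) is set up correctly, and $\ES(5)=9$ is the right Erd\H{o}s--Szekeres value, but the proof has a genuine gap: the heart of the argument --- deriving a contradiction from five points $a_1,\dots,a_5\in C_1$ in convex position --- is never actually carried out. After producing two same-coloured side-blockers $b_j,b_k$ and a blocker $c$ of $\overline{b_jb_k}$, you say the case analysis ``must iterate'' and that ``closing each geometric case contradicts the hypothesis'', but no case is closed and no termination argument for the iteration is given. The case $c\in C_1$ is precisely the troublesome one: Erd\H{o}s--Szekeres gives you five points of $C_1$ in convex position but says nothing about emptiness, so the pentagon may contain further $C_1$-points in its interior, and the set of points inside the pentagon need not be $3$-blocked; nothing you wrote rules out such configurations. (The statement you are after is true --- indeed each colour class of a $4$-blocked set has at most four points, Lemma~\ref{lem:FourInColourClass} --- but in the paper that is obtained only later, by a considerably more delicate analysis, so it cannot simply be asserted here.)

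The paper's proof sidesteps exactly this difficulty by demanding one more point per class and a stronger Ramsey-type tool: if $|P|\geq 37$ the largest class $S$ has at least $10$ points, and Harborth's theorem yields a $5$-point subset $K\subseteq S$ spanning a convex pentagon $\conv(K)$ that is \emph{empty with respect to $S$}. Emptiness is the decisive ingredient: every blocker of a segment between two vertices of $K$ lies inside $\conv(K)$ and hence outside $S$, so $T:=P\cap(\conv(K)-K)$ is a $3$-blocked set; the five sides and five chords of the pentagon force at least $5+3=8$ blockers, so $|T|\geq 8$, contradicting the $6$-point bound for $3$-blocked sets from \thmref{23}. If you want to rescue your route, replace $\ES(5)=9$ by Harborth's threshold of $10$ (so the hypothesis $|P|\geq 37$ gives a class of size $10$) and use the empty pentagon; then the colour/location case analysis you were attempting disappears entirely.
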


\begin{proof}
  Let $P$ be a 4-blocked set. Suppose that $|P| \geq 37$.  Let $S$ be
  the largest colour class. Thus $|S|\geq10$.  By \lemref{GenPos}, $S$
  is in general position.  By a theorem of \citet{Harborth78}, some
  $5$-point subset $K\subseteq S$ is the vertex-set of an empty convex
  pentagon $\conv(K)$.  Let $T:= P \cap ( \conv(K) - K )$.  Since
  $\conv(K)$ is empty with respect to $S$, each point in $T$ is not in
  $S$.  Thus $T$ is 3-blocked.  $K$ needs at least 8 blockers (5
  blockers for the edges on the boundary of $\conv(K)$, and 3 blockers
  for the chords of $\conv(K)$).  Thus $|T| \geq 8$. But every
  3-blocked set has at most 6 points, which is a contradiction.  Hence
  $|P| \leq 36$.
\end{proof}

We now set out to characterise all representable 4-sets. We need a
few technical lemmas.

\begin{lemma}
  \lemlabel{New} 
Let $A$ be a set of three  monochromatic points in a
  4-blocked set $P$. Then $P\cap\conv(A)$ contains a point from  each colour class. 
\end{lemma}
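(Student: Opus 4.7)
The plan is to let $T := P \cap \conv(A)$ and show that $T$, with the colouring inherited from $P$, is itself a $k'$-blocked set for some $k'$; the lemma will then follow immediately from \thmref{23}.

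The first step is to establish the inheritance property using convexity. For any $u,v \in T$, the closed segment $\overline{uv}$ is contained in $\conv(A)$, so every $P$-point that blocks $u$ and $v$ already lies in $T$. Combined with $T \subseteq P$, this shows that two points of $T$ are blocked in $T$ if and only if they are blocked in $P$, which by the $4$-blocking of $P$ happens if and only if they share a colour. Hence $T$ with its inherited colouring is $k'$-blocked, where $k'$ is the number of colours actually appearing on $T$.

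Next, $T$ contains all three points of $A$, and by \lemref{GenPos} these are non-collinear, so in particular $T$ has a colour class of size at least $3$. However, \thmref{23} enumerates the representable $2$-sets and $3$-sets, and in every one of them each colour class has size at most $2$. Therefore $k' \geq 4$; since $P$ has only four colours, this forces $k' = 4$, which is exactly the statement that every colour class of $P$ is represented in $T$.

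I do not anticipate a real obstacle: the argument is essentially the single observation that $\conv(A)$ carves out a self-contained blocked subset, after which \thmref{23} does all the work. The only point that needs care is the inheritance step, and that reduces to pure convexity.
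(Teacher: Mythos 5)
Your proof is correct and follows essentially the same route as the paper: both restrict to $T=P\cap\conv(A)$, observe (via convexity) that $T$ inherits the blocking structure, and then invoke \thmref{23} to rule out a $(\leq 3)$-coloured blocked set containing a monochromatic triple. You merely spell out the inheritance step that the paper leaves implicit, which is a fine addition.
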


\begin{proof}
  $P\cap\conv(A)$ contains at least three points in $A$.  If
  $P\cap\conv(A)$ contains no point from one of the three other colour
  classes, then $P\cap\conv(A)$ is a $(\leq 3)$-blocked set with three
  points in one colour class ($A$), contradicting \thmref{23}.
\end{proof}

\begin{lemma}
\lemlabel{1333}
$\{3,3,3,1\}$ is not representable.
\end{lemma}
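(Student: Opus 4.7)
The plan is to assume for contradiction that $P$ is a $\{3,3,3,1\}$-blocked point set with colour classes $A$, $B$, $C$ of size three and singleton fourth class $\{z\}$, and derive a contradiction.

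First I would apply \lemref{New} to each of the three monochromatic triples $A$, $B$, $C$. Since the fourth colour class consists only of $z$, this forces $z \in \conv(A) \cap \conv(B) \cap \conv(C)$, and moreover $\conv(X) \cap P$ contains at least one point of each other colour class for every $X \in \{A,B,C\}$. I would then observe that $z$ can block at most one monochromatic pair in each of $A$, $B$, $C$: by \lemref{GenPos}, two distinct edges of $\conv(X)$ meet only at a vertex of $\conv(X)$, and $z$ is not a vertex of $\conv(X)$, so $z$ lies on at most one such edge.

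The central case is when $z$ lies in the interior of each of the three triangles $\conv(A)$, $\conv(B)$, $\conv(C)$. In this case $z$ is not on any line through two same-coloured points, so $z$ blocks no monochromatic pair. Every monochromatic pair is then blocked by some point of $P \setminus \{z\}$, and every cross-class pair, being visible in $P$, remains visible in $P \setminus \{z\}$ because deleting a point cannot create new blockings. Hence $P \setminus \{z\}$ is a $\{3,3,3\}$-blocked point set, contradicting \thmref{23}.

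It remains to rule out the case where $z$ lies on an edge of some $\conv(X)$. Without loss of generality, $z$ lies on the edge $\overline{a_1 a_2}$ of $\conv(A)$; then $z$ blocks $(a_1,a_2)$, and by \lemref{SelfBlockingThree} the line $a_1 a_2$ contains exactly the three points $a_1$, $a_2$, $z$. Here I would proceed by sub-case analysis according to whether $z$ also lies on an edge of $\conv(B)$ or $\conv(C)$, combining \lemref{New} applied to the 4-blocked sub-configuration $P \cap \conv(A)$ (which contains $A$, at least one point of each of $B$ and $C$, and $z$) with the structure of the forced blockers of $(a_1,a_3)$ and $(a_2,a_3)$ on the remaining edges of $\conv(A)$ and with the analogous constraints inside $\conv(B)$ and $\conv(C)$.

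The main obstacle is this last step: when $z$ lies on the boundary of only one of the three triangles, deleting $z$ no longer immediately yields a $\{3,3,3\}$-blocked set, and one must argue more carefully that the convex-hull containment relations forced by \lemref{New} cannot all be satisfied by a single geometric configuration. The key leverage remains \thmref{23} together with the fact that the nine monochromatic blocker lines and the lines through $z$ tightly constrain the angular arrangement of $P \setminus \{z\}$ around $z$, so that in each sub-case the forced incidences among points of $B$ and $C$ with the boundaries of $\conv(A)$, $\conv(B)$, $\conv(C)$ become geometrically inconsistent.
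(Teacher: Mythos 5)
There is a genuine gap. Your ``central case'' (where $z$ blocks no monochromatic pair, equivalently lies in the interior of all three triangles) is exactly the one-line opening reduction of the paper's proof: if $d$ blocks no monochromatic pair then $P\setminus\{d\}$ is a 3-blocked set of nine points, contradicting \thmref{23}. That part is fine. But essentially all of the work in proving \lemref{1333} lies in the remaining case, where $z$ does block a monochromatic pair, say $a_1,a_2$, and for that case you offer only a plan (``sub-case analysis \ldots the forced incidences \ldots become geometrically inconsistent'') and you yourself flag it as the main obstacle. Convex-hull containments from \lemref{New} alone do not force a contradiction here: one must actually exhibit an unblockable pair or an over-large sub-blocked set, and it is not at all clear how your proposed incidence constraints do that.

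For comparison, the paper resolves this case by a quantitative splitting argument that is absent from your sketch: the line $\overleftrightarrow{a_1a_2}$ (which contains only $a_1,a_2,d$ by \lemref{SelfBlockingThree}) divides the other seven points into sides $P_1,P_2$; since a side with too many points together with $a_1$ would be a 3-blocked set exceeding the bounds of \thmref{23}, only the splits $(4,3)$ and $(5,2)$ survive, and each is then killed by a careful chase of forced blockers (who can block $\overline{b_1b_3}$, $\overline{c_1c_2}$, etc., given the separating lines), with \lemref{New} used only once, deep inside Case~2. Without some replacement for this counting-by-sides step and the ensuing blocker analysis, your argument stops where the real difficulty begins, so the proposal as written does not establish the lemma.
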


\begin{proof}
Let $a*b*c$ mean that $b$ blocks $a$ and $c$, and let $a*b*c*d$ mean
that $a*b*c$ and $b*c*d$.  This allows us to record the order in which points occur on a line.

Suppose that $P$ is a $\{3,3,3,1\}$-blocked set of points, with colour classes $A=\{a_1, a_2, a_3\}$, $B=\{b_1, b_2, b_3\}$, $C=\{c_1, c_2, c_3\}$, and $D=\{d\}$.

If $d$ does not block some monochromatic pair, then $P\setminus D$ is
a 3-blocked set of nine points, contradicting \thmref{23}. Therefore $d$
blocks some monochromatic pair, which we may call $a_1, a_2$.  Now
$\overleftrightarrow{a_1a_2}$ divides the remaining seven points of
$P$ into two sets, $P_1$ and $P_2$, where WLOG $4\leq|P_1|\leq 7$ and $0\leq|P_2|\leq3$.  If $|P_1|\ge6$, then $P_1\cup\{a_1\}$ is a 3-blocked set of more
than six points, contradicting \thmref{23}.  Thus $|P_1|=4$ and $|P_2|=3$, or 
$|P_1|=5$ and $|P_2|=2$. Consider the following cases, as illustrated in \figref{1333}.

\textbf{Case 1.} $|P_1|=4$ and $|P_2|=3$:

We have $a_3 \in P_1$ and $P_1\neq\{a_3,b_1,b_2,b_3\}$ and
$P_1\neq\{a_3,c_1,c_2,c_3\}$, as otherwise $P_1$ is a 2-blocked set of
four points, contradicting \thmref{23}. WLOG, $P_1=\{a_3, b_1,
b_2,c_1\}$ and $P_2=\{b_3, c_2, c_3\}$, where $c_2*b_3*c_3$.  Some
point in $P_1$ blocks $b_1$ and $b_2$.  If $b_1*a_3*b_2$, then neither
$b_1$ nor $b_2$ can block $\overline{a_3a_1}$ or $\overline{a_3a_2}$; thus $c_1$ blocks
$a_3$ from both $a_1$ and $a_2$, a contradiction.  Therefore
$b_1*c_1*b_2$.  Since two of these three points must block $\overline{a_3a_1}$
and $\overline{a_3a_2}$, we may assume that $a_1*b_1*a_3$, and either $a_2*b_2*a_3$ or
$a_2*c_1*a_3$.

\begin{figure}[ht]
\includegraphics[scale=0.61]{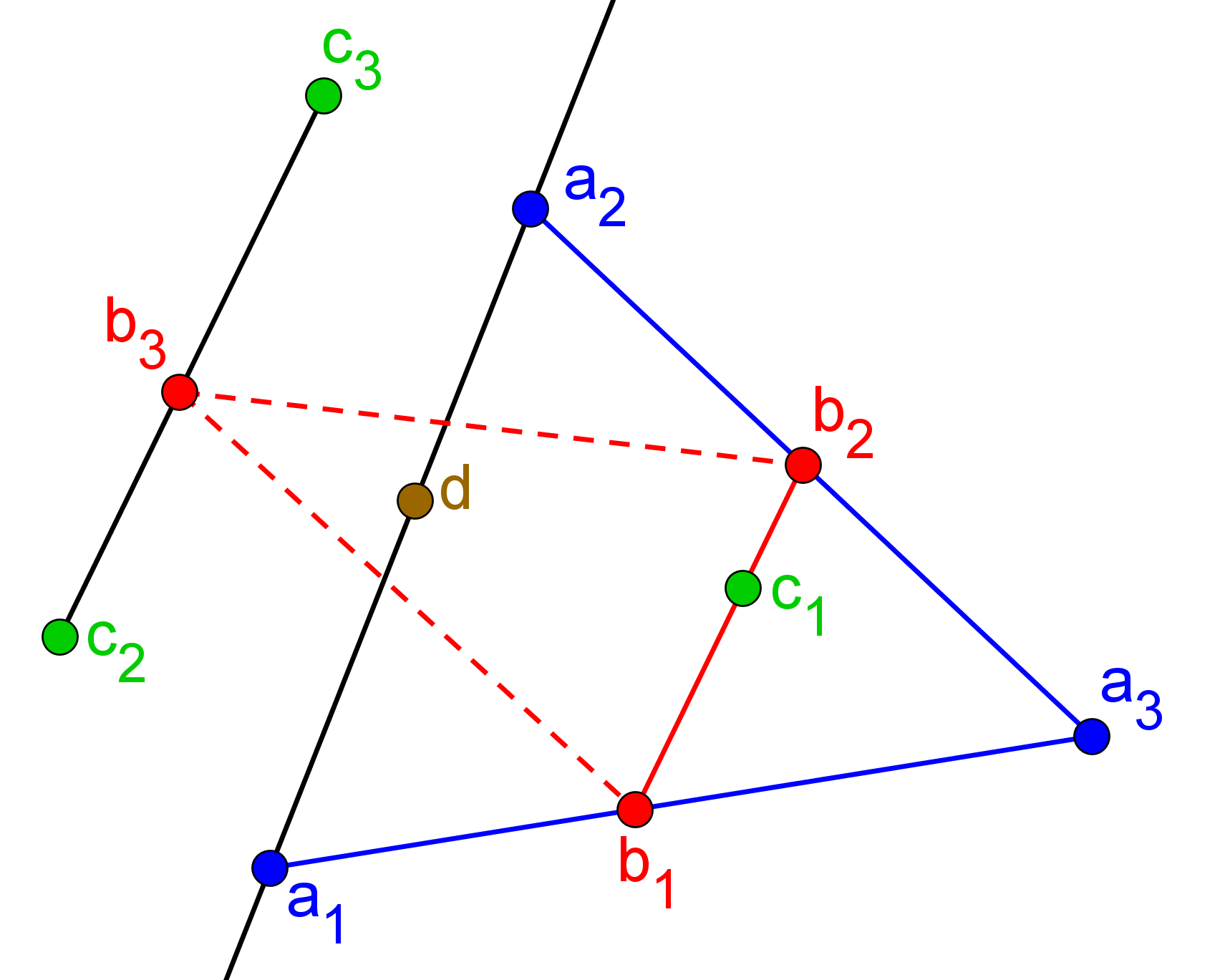}
\hfill
\includegraphics[scale=0.61]{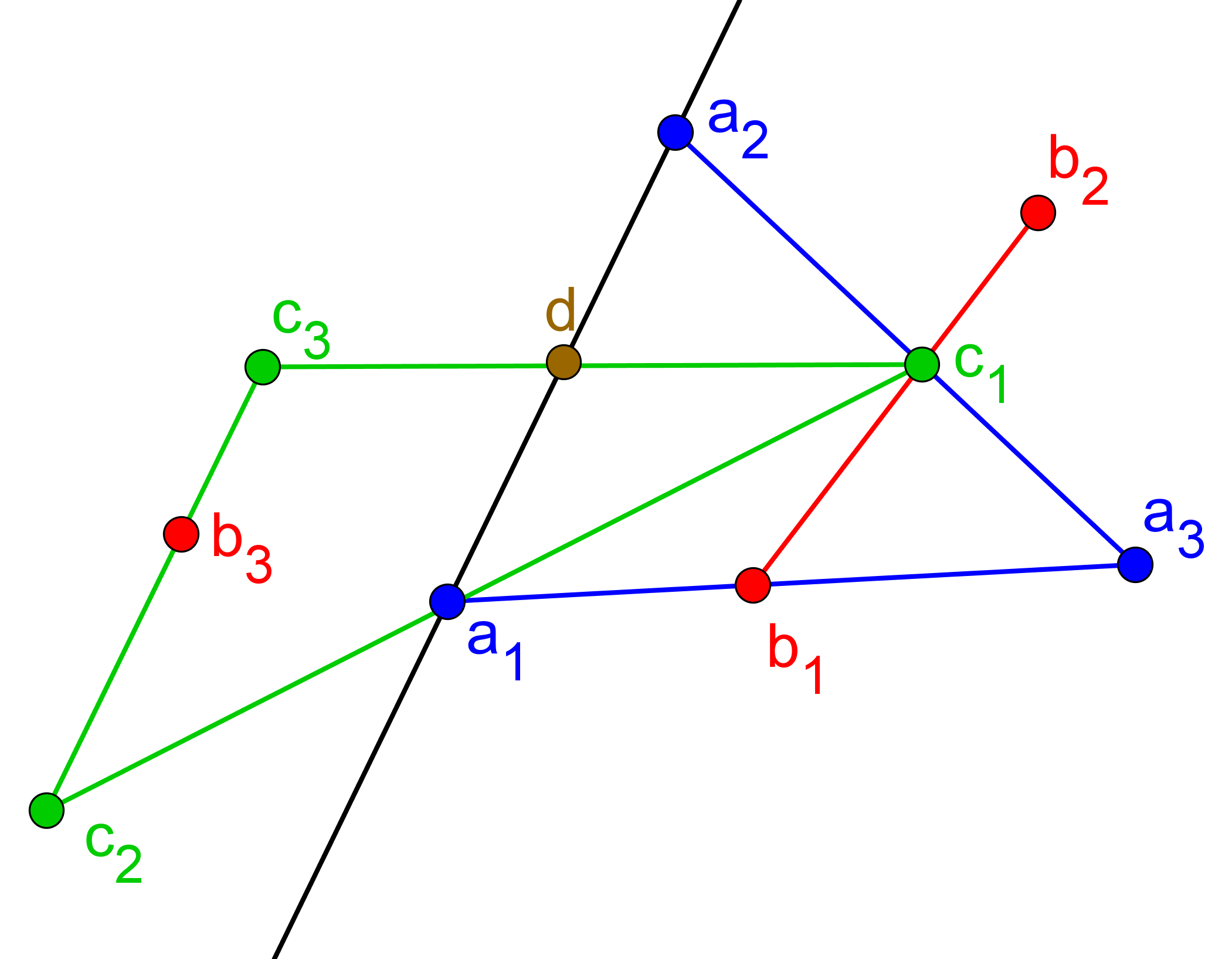}
\hfill
\includegraphics[scale=0.61]{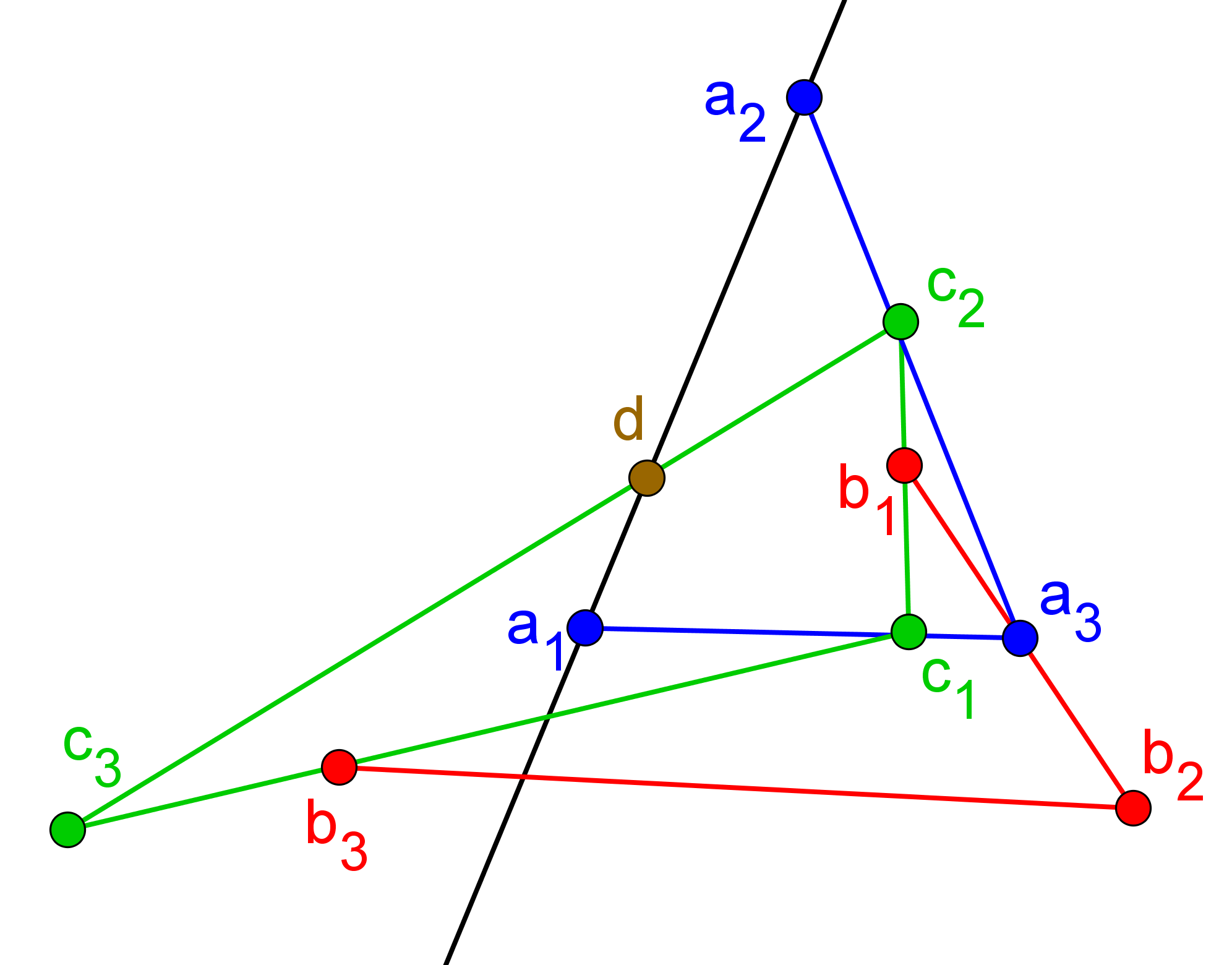}
\caption{\figlabel{1333} Cases 1.1, 1.2, and 2.}
\end{figure}

Case 1.1. $a_2*b_2*a_3$:  Since $c_2*b_3*c_3$, the only possible blockers for $\overline{b_1b_3}$ are on $\overleftrightarrow{a_1a_2}$.  We cannot have $b_3*a_1*b_1$, for then $b_3*a_1*b_1*a_3$.  We cannot have $b_3*a_2*b_1$, for then $\overleftrightarrow{b_1b_3}$ separates $b_2$ from $a_1$ and $d$, one of which needs to block $\overline{b_2b_3}$.  Therefore $b_1*d*b_3$.  Similarly, $b_2*d*b_3$, a contradiction.

Case 1.2. $a_2*c_1*a_3$:  Thus $a_2$ does not block
$\overline{c_1c_2}$ or $\overline{c_1c_3}$.  Therefore $a_1$ and $d$
block $\overline{c_1c_2}$ and $\overline{c_1c_3}$.  WLOG,
$c_2*a_1*c_1$ and $c_3*d*c_1$.  Since $c_2*b_3*c_3$, the blocker for
$\overline{b_1b_3}$ is on the same side of
$\overleftrightarrow{c_1c_3}$ as $a_1$, and on the same side of
$\overleftrightarrow{b_1b_2}$ as $a_1$.  The only such points are
$a_1$, $c_2$, and $b_3$.  Thus $a_1$ or $c_2$ block
$\overline{b_1b_3}$. Now, $c_2$ does not block $\overline{b_1b_3}$, as
otherwise $b_1*c_2*b_3*c_3$. Similarly, 
$a_1$ does not block $\overline{b_1b_3}$, as
otherwise $b_3*a_1*b_1*a_3$. This contradiction concludes this case.

\textbf{Case 2.} $|P_1|=5$ and $|P_2|=2$:

We have $a_3\in P_1$, as otherwise $P_1$ is 2-blocked, contradicting
\thmref{23}. WLOG,
$P_1=\{a_3, b_1, b_2, c_1, c_2\}$ and $P_2=\{b_3, c_3\}$.  Note that
$a_3$ must block at least one of $\overline{b_1b_2}$,
$\overline{c_1c_2}$, because $P_1\setminus \{a_3\}$ is too large to be
2-blocked; however, it cannot block both, for then there would be no
valid blockers left for $a_3$.  Thus WLOG $b_1*a_3*b_2$ and
$c_1*b_1*c_2$.  Since $a_3\in \overline{b_1b_2}$, neither $b_1$ nor
$b_2$ block $\overline{a_3a_1}$ or $\overline{a_3a_2}$.  Thus, WLOG, $a_1*c_1*a_3$ and $a_2*c_2*a_3$.  

By \lemref{New}, $P\cap\conv\{c_1,c_2,c_3\}$ contains some member of $A$, WLOG $a_1$.  We cannot have $a_1 \in \overline{c_1c_3}$, for then $c_3*a_1*c_1*a_3$.  Therefore $a_1$ is on the same side of $\overleftrightarrow{c_1c_3}$ as $c_2$; consequently, $d$ and $a_2$ are, as well.  It follows that $c_1*b_3*c_3$, and so $b_3$ sees $b_2$.
\end{proof}

\begin{lemma}
  \lemlabel{K4221} Let $P$ be a 4-blocked set. Suppose that some
  colour class $S$ of $P$ contains a subset $K$, such that $|K|=4$ and
  $K$ is the vertex-set of a convex quadrilateral $\conv(K)$ that is
  empty with respect to $S$. Then $P$ is $\{4,2,2,1)$-blocked.
\end{lemma}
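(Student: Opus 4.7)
Let $K=\{s_1,s_2,s_3,s_4\}$ be the vertices of the empty convex quadrilateral in cyclic order; by \lemref{GenPos} this quadrilateral is non-degenerate. Let $c$ denote the intersection of the diagonals $\overline{s_1s_3}$ and $\overline{s_2s_4}$, and write $T_1,T_2,T_3$ for the three non-$S$ colour classes. The plan is first to pin down a forced substructure inside $\conv(K)$, then determine its colouring using \lemref{New}, and finally to rule out any further points of $P$.

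The six pairs inside $K$ are all monochromatic and must be blocked. By \lemref{SelfBlockingThree} at most one blocker lies on each side and each diagonal of $\conv(K)$, so exactly one lies on each. Let $x_{12},x_{23},x_{34},x_{41}$ denote the four side blockers. I claim the two diagonal blockers coincide at $c$. Otherwise, $\conv(K)$ contains two distinct diagonal blockers $y_{13},y_{24}$ in addition to the four $x_{ij}$; a case analysis on the positions of $y_{13},y_{24}$ relative to $c$, combined with applications of \lemref{New} to the four triangles $\conv\{s_i,s_{i+1},s_{i+2}\}$ and the visibility constraints between the resulting same-coloured pairs, yields a contradiction. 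Assuming henceforth that $c\in P$ blocks both diagonals, $\conv(K)$ contains the five forced non-$S$ points $x_{12},x_{23},x_{34},x_{41},c$.

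Next, apply \lemref{New} to each triple $\{s_i,s_{i+1},s_{i+2}\}$ (indices modulo $4$). The triangle $\conv\{s_1,s_2,s_3\}$ has $x_{12},x_{23},c$ on its three sides, and \lemref{New} forces a $P$-point of each non-$S$ colour in it, so these three points carry three distinct non-$S$ colours. Repeating for $\conv\{s_1,s_3,s_4\}$, $\conv\{s_2,s_3,s_4\}$ and $\conv\{s_4,s_1,s_2\}$, and using that $c$ appears in all four applications, a short cross-check forces $x_{12}$ and $x_{34}$ to share a common colour, $x_{23}$ and $x_{41}$ to share another, and $c$ to be alone in its colour class. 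The colour class sizes are therefore at least $4,2,2,1$.

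It remains to rule out any further point $p\in P\setminus(K\cup\{x_{12},x_{23},x_{34},x_{41},c\})$. If $p\in S$ then $p$ lies strictly outside $\conv(K)$, and each pair $(p,s_i)$ must be blocked by some $b_i\in P$; \lemref{SelfBlockingThree} rules out $b_i$ being another $s_j$, being $c$, or being either of the two side blockers on a side of $\conv(K)$ incident to $s_i$, since each such choice creates four collinear points. This leaves only the two ``opposite'' side blockers as candidates for $b_i$; each candidate constrains $p$ to lie on a specific line through $s_i$, and a short analysis of the $2^4$ possible assignments of the $b_i$'s shows that no point $p$ satisfies all four constraints simultaneously. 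If $p\notin S$, then $p$ shares a colour with one of the five forced non-$S$ points, and the required blocker between them together with $p$'s visibility to the remaining four points pins $p$ to an impossible location. I expect this final position analysis to be the main obstacle, calling for careful casework in the style of the proof of \lemref{1333}.
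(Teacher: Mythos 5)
Your overall plan (force an ``X'' configuration of five non-$S$ points inside $\conv(K)$, determine its colouring, then exclude further points) parallels the paper, but the two steps that actually carry the proof are left as promises, and both hide real difficulties. First, your claim that the two diagonal blockers coincide at $c$ is supported only by ``a case analysis \dots yields a contradiction''; nothing is carried out, and you also never show that the five forced points are the \emph{only} non-$S$ points inside $\conv(K)$. This matters because your subsequent use of \lemref{New} (to conclude that $x_{12},x_{23},c$ etc.\ get three distinct colours) is only valid if those are the only non-$S$ points in each triangle, so the colouring step silently assumes what your final step is supposed to prove. The paper avoids all of this with one observation you are missing: $T:=P\cap(\conv(K)\setminus K)$ is itself a blocked set with at most three colour classes, so \thmref{23} applies directly, giving $|T|\in\{5,6\}$ with known structure; the $\{2,2,2\}$ possibility (i.e.\ two distinct diagonal blockers) is then killed by noting that the four edge blockers are four extreme points of $\conv(T)$, whereas a $\{2,2,2\}$-blocked set has only three. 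That single appeal to the 3-blocked characterisation replaces your two deferred case analyses.

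Second, your exclusion of an extra point $p$ is not sound as sketched. You restrict the blocker of $\overline{ps_i}$ (or of $p$ and its same-coloured partner) to the nine known points, but at that stage $P$ may contain other unknown points, and those could serve as blockers for one another; with $p$ chosen arbitrarily there is no reason the blocker lies in the known configuration at all. The paper resolves exactly this by an extremal choice: take $p\in P\setminus\conv(K)$ at \emph{minimum distance} from $\conv(K)$, so any blocker between $p$ and a point of $\conv(K)$ is strictly closer to $\conv(K)$ and hence must lie in $\conv(K)$; the contradiction then comes quickly from \lemref{SelfBlockingThree} and the collinearities already present in the X-configuration (no $2^4$ assignment analysis is needed). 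Without the extremal choice, and with the promised ``short analyses'' unexecuted, your argument has genuine gaps at precisely the load-bearing points.
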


\begin{proof}
  Let $T:= P \cap (\conv(K) - K )$.  Since $\conv(K)$ is empty with
  respect to $S$, each point in $T$ is not in $S$.  Thus $T$ is
  3-blocked.  $K$ needs at least 5 blockers (4 blockers for the edges
  on the boundary of $\conv(K)$, and at least 1 blocker for the chords
  of $\conv(K)$).  The only representable 3-sets with at least 5
  points are $\{2,2,1\}$ and $\{2,2,2\}$.  Exactly four points in $T$ are
  on the boundary of $\conv(T)$.  Every $\{2,2,2\}$-blocked set contains
  three points on the boundary of the convex hull.  Thus $T$ is
  $\{2,2,1\}$-blocked.  Hence, as illustrated in
  \figref{BradsTwistedGrid}, one point $c$ in $T$ is at the
  intersection of the two chords of $\conv(K)$, and exactly one point
  in $T$ is on each edge of the boundary of $\conv(K)$, such that the
  points on opposite edges of $\conv(K)$ are collinear with $c$.

  \begin{figure}[ht]
    \includegraphics[scale=0.8]{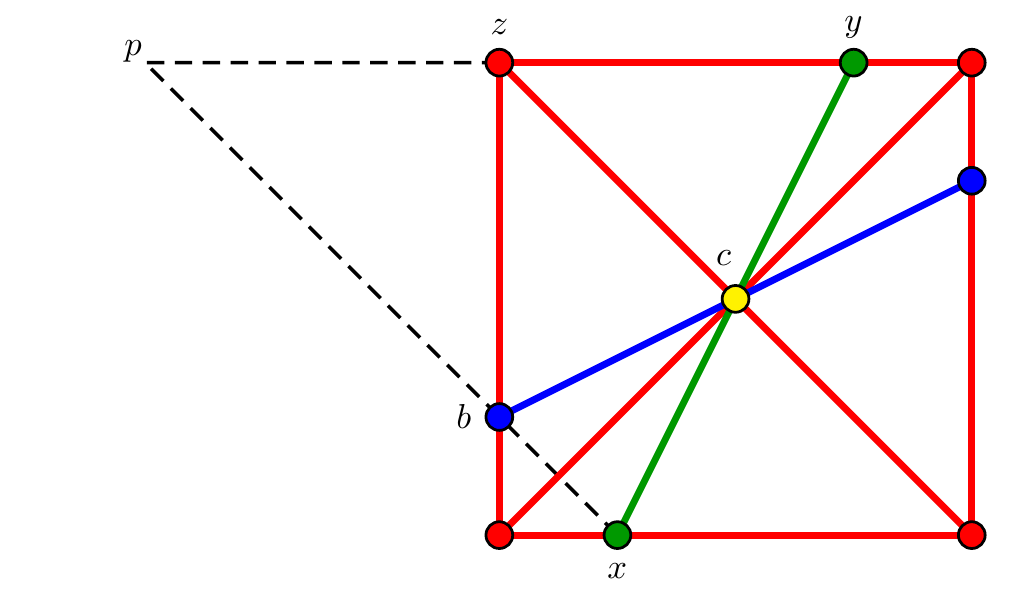}
    \caption{\figlabel{BradsTwistedGrid} A $\{4,2,2,1\}$-blocked point
      set.}
  \end{figure}

  We claim that no other point is in $P$.  Suppose otherwise, and let
  $p$ be a point in $P$ outside $\conv(K)$ at minimum distance from
  $\conv(K)$.  Let $x$ be a point in $\conv(K)$ receiving the same
  colour as $p$.  Thus $p$ and $x$ are blocked by some point $b$ in
  $\conv(K)$.  Thus $b$ and $x$ are collinear with no other point in
  $P\cap \conv(K)$.  Hence $x\neq c$ and $x\not\in K$.  Thus $x$ is in the
  interior of one of the edges of the boundary of $\conv(K)$.  Let $y$
  be the point in $\conv(K)$ receiving the same colour as $x$.  Thus
  $x$ and $y$ are on opposite edges of the boundary of $\conv(K)$.
  Hence $p$ and $y$ receive the same colour, implying $p$ and $y$ are
  blocked. Since $p$ is at minimum distance from $\conv(K)$, this
  blocker must a point $z$ of $\conv(K)$. This implies that $p,z,y$
  and one other point of $\conv(K)$ are four collinear points, which
contradicts \lemref{SelfBlockingThree}. Hence no other point is in $P$, and $P$ is
  $\{4,2,2,1\}$-blocked.
\end{proof}

\lemref{K4221} has the following corollary (let $K:=S$).

\begin{corollary}
  \corlabel{K4221} Let $P$ be a 4-blocked set. Suppose that some
  colour class $S$ consists of exactly four points in convex
  position. Then $P$ is $\{4,2,2,1\}$-blocked.
\end{corollary}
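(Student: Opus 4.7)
The plan is to apply Lemma \lemref{K4221} directly with $K:=S$, as the parenthetical hint already suggests. The substantive geometric work has been done in that lemma, so all that remains is to verify its three hypotheses under the stronger assumption of the corollary.

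Setting $K:=S$, the condition $|K|=4$ follows from $|S|=4$, and $K$ is the vertex-set of a convex quadrilateral $\conv(K)$ because $S$ is assumed to be in convex position. It remains to check that $\conv(K)$ is empty with respect to $S$, that is, $S\cap(\conv(K)-K)=\emptyset$. Since $K=S$, this intersection is $S\cap(\conv(S)-S)=\emptyset$ trivially: the four points of $S$ are precisely the vertices of $\conv(K)$, and no further point of $S$ exists. Hence every hypothesis of Lemma \lemref{K4221} holds, and we conclude that $P$ is $\{4,2,2,1\}$-blocked.

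There is really no obstacle here; the corollary is just the specialisation in which the colour class $S$ itself plays the role of the empty convex quadrilateral $K$, and the emptiness condition is automatic because $S$ has no other points to contribute. All the structural content, in particular the rigidity forcing the remaining blockers into the $\{2,2,1\}$-pattern of \figref{BradsTwistedGrid}, is already packaged inside Lemma \lemref{K4221}.
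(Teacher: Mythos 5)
Your proof is correct and is exactly the paper's argument: the paper derives the corollary from \lemref{K4221} by setting $K:=S$, with the emptiness of $\conv(K)$ with respect to $S$ being automatic since $S$ has no points other than the four vertices. Nothing further is needed.
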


The next lemma is a key step in our characterisation of representable
4-sets.

\begin{lemma}
  \lemlabel{FourInColourClass} Each colour class in a 4-blocked point
  set has at most four points.
\end{lemma}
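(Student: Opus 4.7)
My plan is to proceed by contradiction: suppose some colour class $S$ of a 4-blocked set $P$ satisfies $|S|\geq 5$. By \lemref{GenPos}, $S$ is in general position. The goal is to exhibit a 4-element subset $K\subseteq S$ whose convex hull is a quadrilateral containing no other points of $S$, so that \lemref{K4221} applies and forces $P$ to be $\{4,2,2,1\}$-blocked. In particular, the largest colour class of $P$ would then have exactly $4$ points, contradicting $|S|\geq 5$.

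The key geometric input is the classical fact that every set of at least five points in general position in the plane contains four that form a convex quadrilateral empty of the other points of the set. I would prove this by a minimality argument. Since $|S|\geq 5=\ES(4)$, the Happy Ending theorem guarantees that $S$ contains some 4-element subset in convex position, so the family $\mathcal{Q}$ of such subsets is nonempty; choose $K=\{a,b,c,d\}\in\mathcal{Q}$ minimising $|S\cap\conv(K)|$. Suppose for contradiction that some $r\in(S\cap\conv(K))\setminus K$ exists. The diagonals $\lin{ac}$ and $\lin{bd}$ meet inside $\conv(K)$ at a point $o$, and by general position $r$ lies in exactly one of the four open triangles they bound, say the one with vertices $a$, $o$, $b$. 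Then $r$ lies on the $a$-side of $\lin{bd}$ (hence outside triangle $bcd$), so $K':=\{b,c,d,r\}$ is in convex position and $\conv(K')\subseteq\conv(K)$. Moreover, $a$ is an extreme point of $\conv(K\cup\{r\})$ and therefore $a\notin\conv(K')$. Hence $|S\cap\conv(K')|<|S\cap\conv(K)|$, contradicting minimality.

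With such a $K$ in hand, \lemref{K4221} applies directly and yields that $P$ is $\{4,2,2,1\}$-blocked, forcing $|S|=4$ and completing the contradiction. The only non-trivial step is the empty-quadrilateral claim sketched above; everything else is a one-line invocation of \lemref{K4221}. One could alternatively cite a published bound on the empty-convex-$4$-gon number, but the self-contained minimisation argument matches the style of the preceding lemmas in this section.
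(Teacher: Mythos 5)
Your proposal is correct and follows essentially the same route as the paper: by \lemref{GenPos} the colour class $S$ is in general position, an empty convex quadrilateral $K\subseteq S$ is extracted, and \lemref{K4221} forces $P$ to be $\{4,2,2,1\}$-blocked, contradicting $|S|\geq 5$. The only difference is that the paper simply cites Esther Klein's classical result for the existence of the empty quadrilateral, whereas you prove it with a (correct) minimal-containment argument.
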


\begin{proof}
  Suppose that some $4$-blocked point set $P$ has a colour class $S$
  with at least five points. Esther Klein~\citep{ES35} proved that
  every set of least five points in general position in the plane
  contains an empty quadrilateral. By \lemref{GenPos}, $S$ is in
  general position. Thus $S$ contains a subset $K$, such that $|K|=4$
  and $K$ is the vertex-set of a convex quadrilateral $\conv(K)$ that
  is empty with respect to $S$. By \lemref{K4221}, $P$ is
  $\{4,2,2,1\}$-blocked, which is the desired contradiction.
\end{proof}


\begin{lemma}
  \lemlabel{K4222} Let $P$ be a 4-blocked point set with colour classes
  $A, B, C, D$. Suppose that no colour class consists of exactly
  four points in convex position (that is, \corref{K4221} is not
  applicable). Furthermore, suppose that some colour class $A$
  consists of exactly four points in nonconvex position. Then $P$ is
  $\{4,2,2,2\}$-blocked (as in \figref{K4222}).
\end{lemma}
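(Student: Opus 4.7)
The plan is to mirror the strategy of Lemma \ref{lem:K4221}: identify the forced blockers of the six monochromatic pairs inside $A$, pin down the colouring so as to obtain $|B|=|C|=|D|=2$, and finally exclude any further point of $P$.

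\emph{Forced blockers.} Without loss of generality $a_4$ lies in the interior of $\triangle a_1a_2a_3$. Each of the six segments $\overline{a_ia_j}$ with $i,j\in\{1,2,3,4\}$, $i\neq j$ needs a blocker, and by \lemref{GenPos} this blocker lies in $B\cup C\cup D$. By \lemref{SelfBlockingThree}, each of the six lines through two $A$-points carries at most three points of $P$. For each $i\in\{1,2,3\}$ the cevian line $\overleftrightarrow{a_ia_4}$ meets the opposite edge $\overline{a_ja_k}$ at an interior point $p_i$; since this line already contains $a_i$ and $a_4$ and must contain a blocker $x_i$ of $\overline{a_ia_4}$ strictly between them, we have $p_i\notin P$. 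Consequently the blocker $y_i$ of $\overline{a_ja_k}$ lies on that edge but off the cevian, so $y_i\neq x_i$. This yields six distinct blockers $x_1,x_2,x_3,y_1,y_2,y_3\in B\cup C\cup D$ inside $\conv(A)$, and each of the six $A$-lines is saturated with exactly three points of $P$.

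\emph{Colouring.} I would next pair these six blockers into three monochromatic pairs, one per colour in $\{B,C,D\}$. Every same-coloured blocker-pair must itself be blocked, and since the six $A$-lines are already saturated, the blocker of such a pair must be one of the $a_\ell$ lying on a new line through $a_\ell$ and the two paired blockers. A short geometric case analysis, combined with \lemref{SelfBlockingThree} (no fourth collinear point is available on any such line) and with \lemref{New} applied to the monochromatic triples $\{a_i,a_j,a_4\}$ of $A$ (which forces at least one blocker of each of $B,C,D$ inside each of the three sub-triangles of $\triangle a_1a_2a_3$ cut off by $a_4$), shows that the only consistent realisation is to match up the six blockers as three pairs collinear through $a_4$, giving $|B|=|C|=|D|=2$ as in \figref{K4222}.

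\emph{Excluding extra points.} The main obstacle is to rule out any further point $q\in P$. Since the six $A$-lines are saturated, $q$ lies on none of them. If $q$ were in colour class $B$, say, it would be monochromatic with both points of the already-identified $B$-pair, and therefore each segment joining $q$ to a $B$-point must contain a blocker in $P$; any such candidate blocker is either in $A$, forcing a fourth collinear point on some already-saturated $A$-line, or else is one of the $x_j,y_j$, forcing four collinear points on the line through $q$ and a blocker, both contradicting \lemref{SelfBlockingThree}. Choosing $q$ to minimise distance to $\conv(A)$ (if $q$ lies outside) or to a suitable sub-triangle (if $q$ lies inside) — an argument modelled on the one in the proof of \lemref{K4221} — closes the remaining cases. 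Hence $P=A\cup\{x_1,x_2,x_3,y_1,y_2,y_3\}$ is $\{4,2,2,2\}$-blocked. The hardest step by some margin is this last one: the first two parts are clean geometric accounting, whereas excluding a rogue point $q$ requires a careful case analysis of where $q$ can sit with respect to the saturated $A$-lines and the three forced pairing-lines through $a_4$.
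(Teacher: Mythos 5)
Your first step (six distinct forced blockers, one per monochromatic segment of $A$, each $A$-line saturated with exactly three points) is fine, but the ``Colouring'' step skips the actual core of the lemma. You assume that the non-$A$ points inside $\conv(A)$ are exactly these six blockers and that they fall into three monochromatic pairs, i.e.\ you essentially assume $|B|=|C|=|D|=2$ rather than prove it. A priori $B$, $C$, $D$ may have three (or four) points each, and $\conv(A)$ may contain points of $P$ beyond the six $x_{ij}$ (blockers needed for monochromatic pairs within $B$, $C$, $D$, for instance). The paper's proof spends most of its length precisely here: it chooses $A$ minimal so that the other classes meeting $\conv(A)$ have at most three points, and then rules out the possibilities that $Q=P\cap\conv(A)$ is $\{4,3,3,3\}$-blocked (via \lemref{1333}), $\{4,3,2,2\}$-blocked or $\{4,3,3,2\}$-blocked, in each case by a case analysis on the one or two extra points that produces four mutually visible points in only three colours. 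Nothing in your sketch addresses these cases, and no ``short geometric case analysis'' starting only from the six forced blockers can, because the extra points need not lie on any $A$-line.

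Moreover, the structure you claim to derive is geometrically wrong. The vertices $a_1,a_2,a_3$ are extreme points of $\conv(A)$, so they can never block two points of $\conv(A)$; and if $a_4$ blocked a pair containing a cevian blocker $x_{i4}$, the line $\overleftrightarrow{a_ia_4}$ would carry four points of $P$. In the true $\{4,2,2,2\}$ configuration (\figref{K4222}) each monochromatic pair consists of a cevian blocker and the blocker of the opposite boundary edge, and it is blocked by \emph{another of the six blockers} (e.g.\ $c_1\in\overline{b_1b_2}$), not by any $a_\ell$; in particular the pairs are not collinear through $a_4$ --- indeed that would contradict your own observation that the cevian feet $p_i$ are not in $P$. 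Finally, your exclusion of an extra point $q$ outside $\conv(A)$ rests on a false dichotomy: if some $x_{ij}$ blocks $\overline{qb}$, the line through $q$, that blocker and $b$ is in general a new line containing only three known points, so no contradiction with \lemref{SelfBlockingThree} arises. The paper's argument here is genuinely more delicate: it uses the pinned-down structure (that $b_2$ lies on the supporting line $\overleftrightarrow{a_2a_3}$, that $c_1\in\overline{b_1b_2}$) to show $c_2$ is the only possible blocker of $\overline{b_1x}$ and then derives a contradiction from a half-plane argument showing $x$ would see $b_2$. As written, your proposal would not close any of these gaps.
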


\begin{figure}[ht]
  \includegraphics{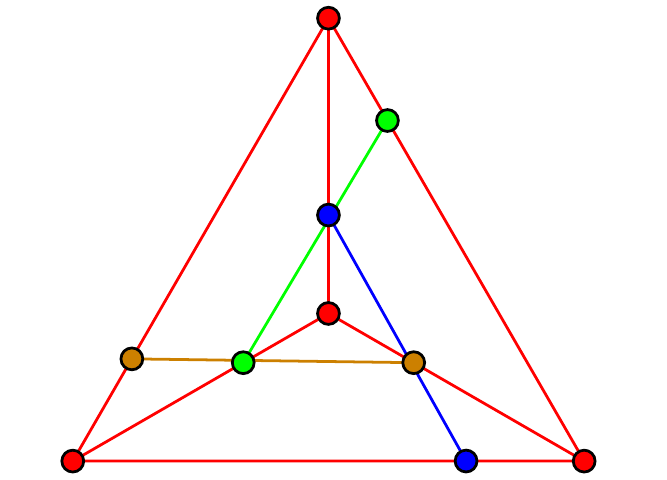}
  \caption{\figlabel{K4222} A $\{4,2,2,2\}$-blocked point set.}
\end{figure}

\begin{proof}
  By \lemref{FourInColourClass}, each colour class has at most four
  points.  By assumption, every 4-point colour class is in nonconvex
  position. We may assume that $A$ is a minimal in the sense that no
  other 4-point colour class is within $\conv(A)$.

  Let $Q:=P\cap \conv(A)$. Thus $Q$ is 4-blocked, and one colour
  class is $A$. By the minimality of $A$, each other colour class in
  $Q$ has at most three points. We first prove that $Q$ is
  $\{4,2,2,2\}$-blocked, and then show that this implies that $P=Q$.

  Let $A=\{a_1, a_2, a_3, a_4\}$, where $a_4$ is the  interior point of
  $\conv(A)$.  Note that the edges with points in $A$ divide
  $\conv (A)$ into three triangles with disjoint interiors. By
  \lemref{New}, each
  colour class of $Q$ is represented in each of these triangles;
  this requires at least two points of each colour (one of which
  could sit on the edge shared by two triangles).

  We name a point with reference to its colour class, such as $b_1\in
  B$; or, we name a point with reference to its position.  Let
  $x_{ij}$ be the unique member of $Q':=Q\setminus A$ that blocks
  $\overline{a_ia_j}$, for $1\le i<j \le 4$.  This accounts for
  exactly 6 points of $Q'$.

  $Q$ is not $\{4,3,3,3\}$-blocked, as otherwise we could delete the three
  outer members of $A$ to represent $\{3,3,3,1\}$, which contradicts
  \lemref{1333}. Thus $Q$ is $\{4,2,2,2\}$-blocked,
  $\{4,3,2,2\}$-blocked, or $\{4,3,3,2\}$-blocked.

  First suppose that $Q$ is $\{4,3,2,2\}$-blocked. Then $Q'$ consists of
  six points $x_{ij}$ and one additional point, $y$.  We have three
  cases:

  \begin{figure}[ht]
    \includegraphics[scale=0.69]{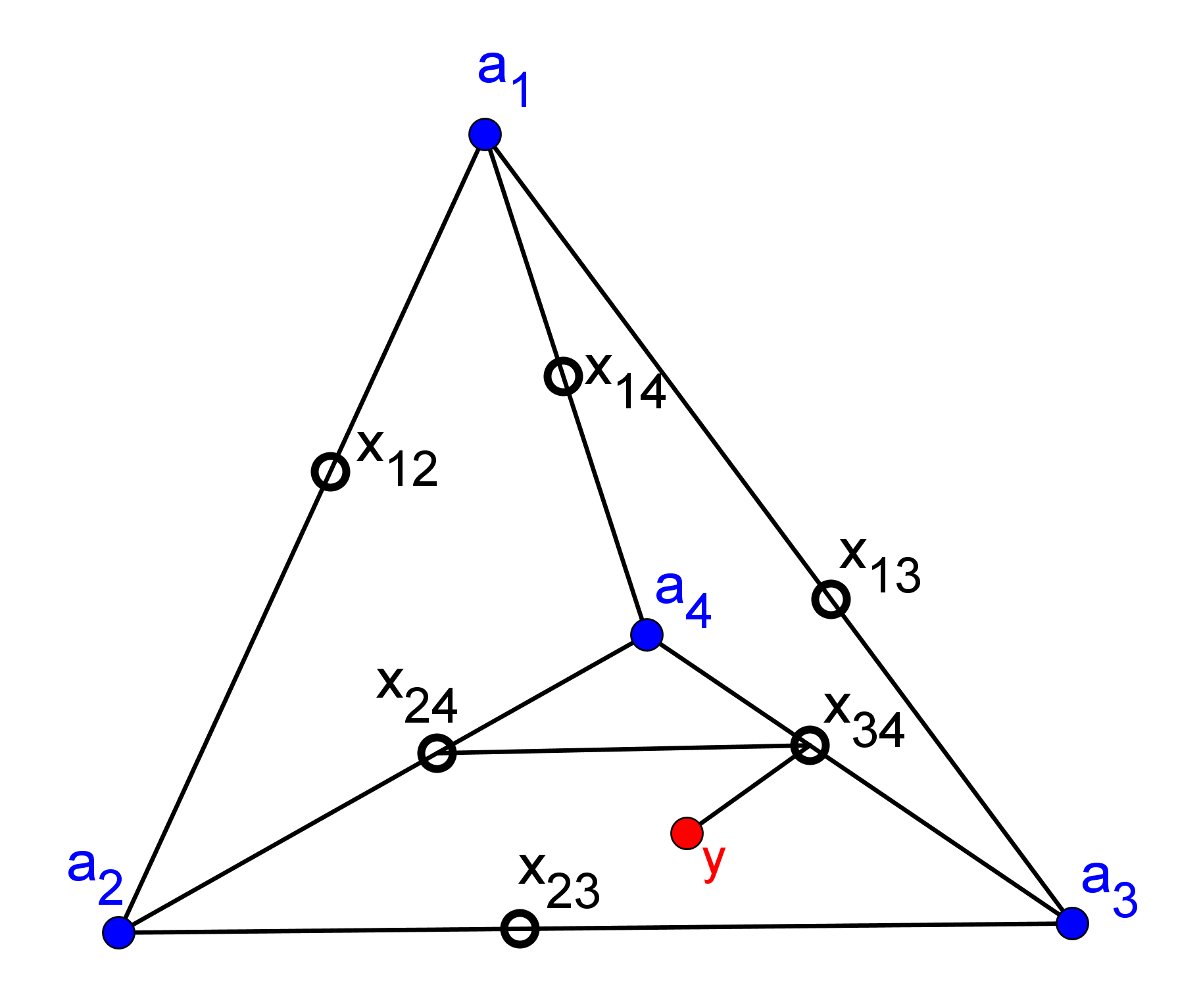}
\hspace*{-4mm}
    \includegraphics[scale=0.69]{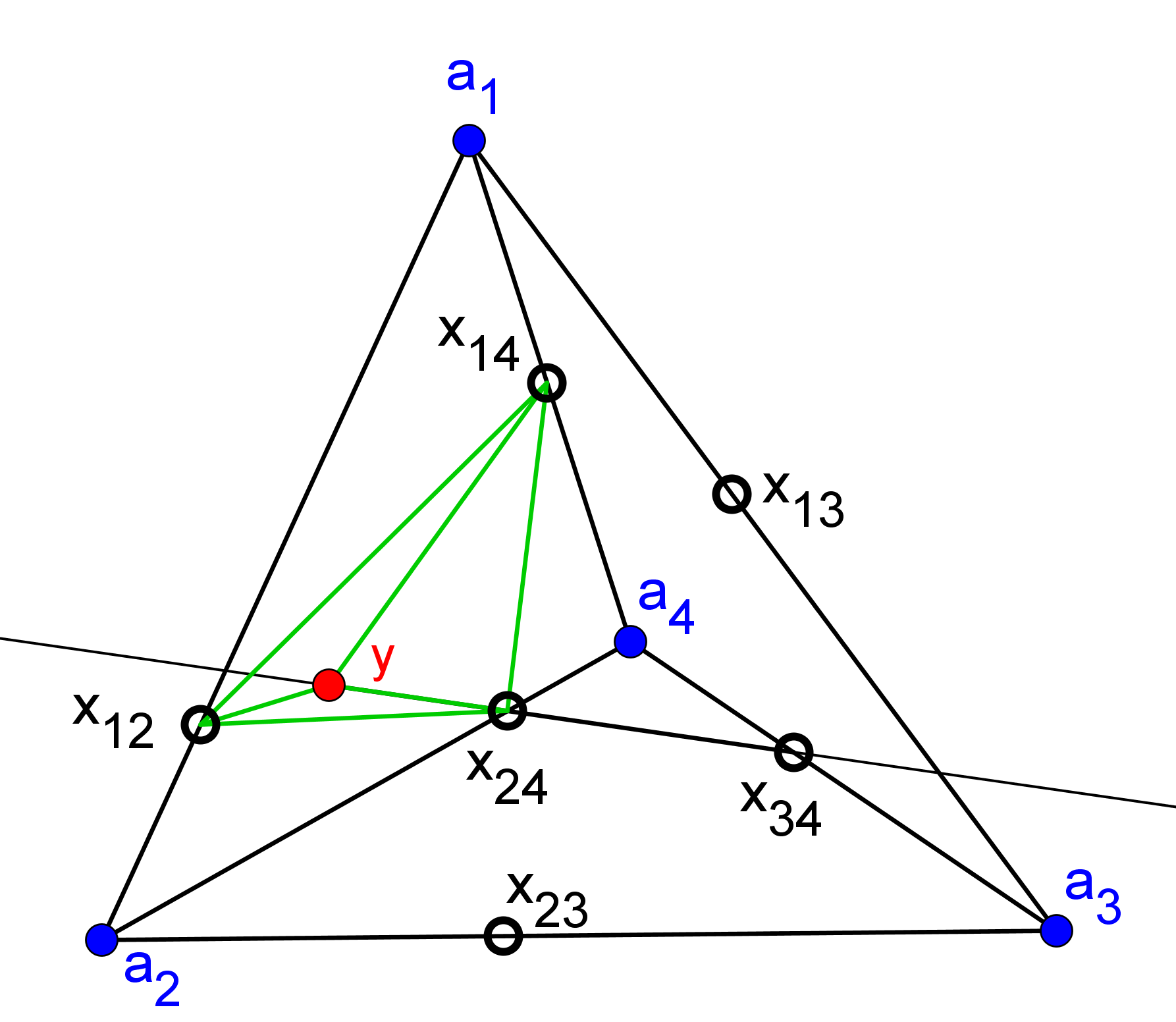}
\hspace*{-4mm}
  \includegraphics[scale=0.69]{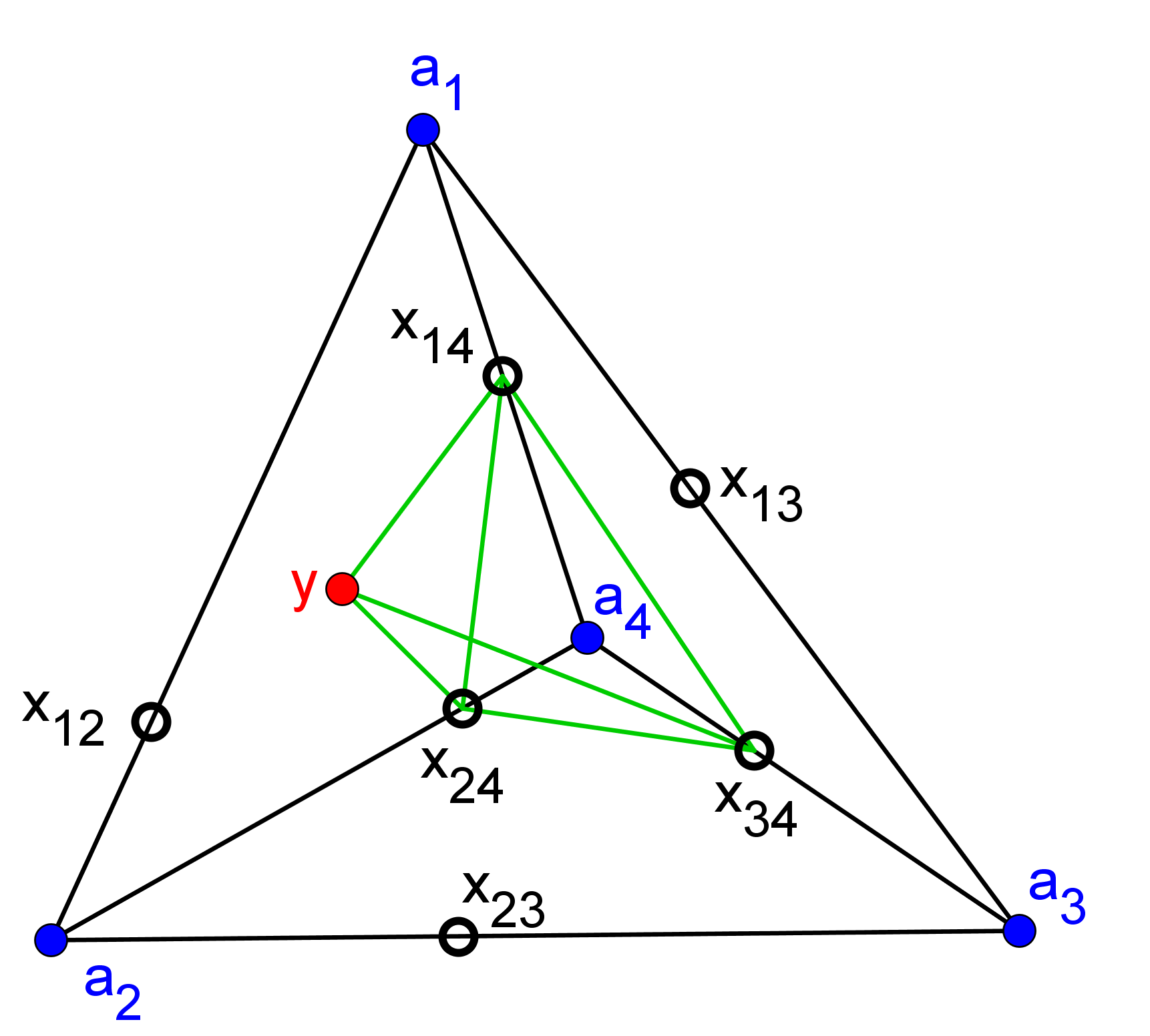}
\hspace*{-4mm}\\
\vspace*{-2ex}
    \caption{Diagrams of $\{4,3,2,2\}$ cases 1, 2, and 3.}
  \end{figure}

  \begin{enumerate}
  \item If $y$ blocks two points $x_{i4}$ and $x_{j4}$, then suppose
    WLOG that $x_{24}*y*x_{34}$.  Now $y$ is on one side or the other
    of $\overleftrightarrow{x_{14}a_4}$, and therefore sees at least
    one of $x_{12}$, $x_{13}$; WLOG we may say that $y$ sees $x_{12}$.
    Now $x_{12}$, $x_{14}$, $x_{24}$, and $y$ are four mutually
    visible points in $Q'$.
  \item If $y$ is collinear with, but does not block, two points
    $x_{i4}$ and $x_{j4}$, then suppose WLOG that $y*x_{24}*x_{34}$.
    Now $x_{12}$ is on one side or the other of
    $\overleftrightarrow{yx_{24}}$, and therefore sees either $x_{14}$
    or $x_{23}$.  Whichever one $x_{12}$ sees, that point is mutually
    visible with $x_{12}$, $x_{24}$, and $y$; thus we have four
    mutually visible points in $Q'$.
  \item If $y$ is not collinear with two points $x_{i4}$ and $x_{j4}$,
    then $y$ sees all such points; thus $x_{14}$, $x_{24}$, $x_{34}$,
    and $y$ are four mutually visible points in $Q'$.
  \end{enumerate}
  In all cases, we have four mutually visible points in three colours,
  which is impossible.

  Now suppose that $Q$ is $\{4,3,3,2\}$-blocked. Then $Q'$ comprises six
  points $x_{ij}$ and two additional points, $y_1$ and $y_2$.  Let
  $\midx = \{ x_{i4}:1\le i \le 3\}$ and let $\midx_2$ be the set of
  segments with endpoints in $\midx$.  How many of the points $y_i$
  block members of $\midx_2$?  Again, we have three cases:

  \begin{figure}[ht]
    \includegraphics[scale=0.69]{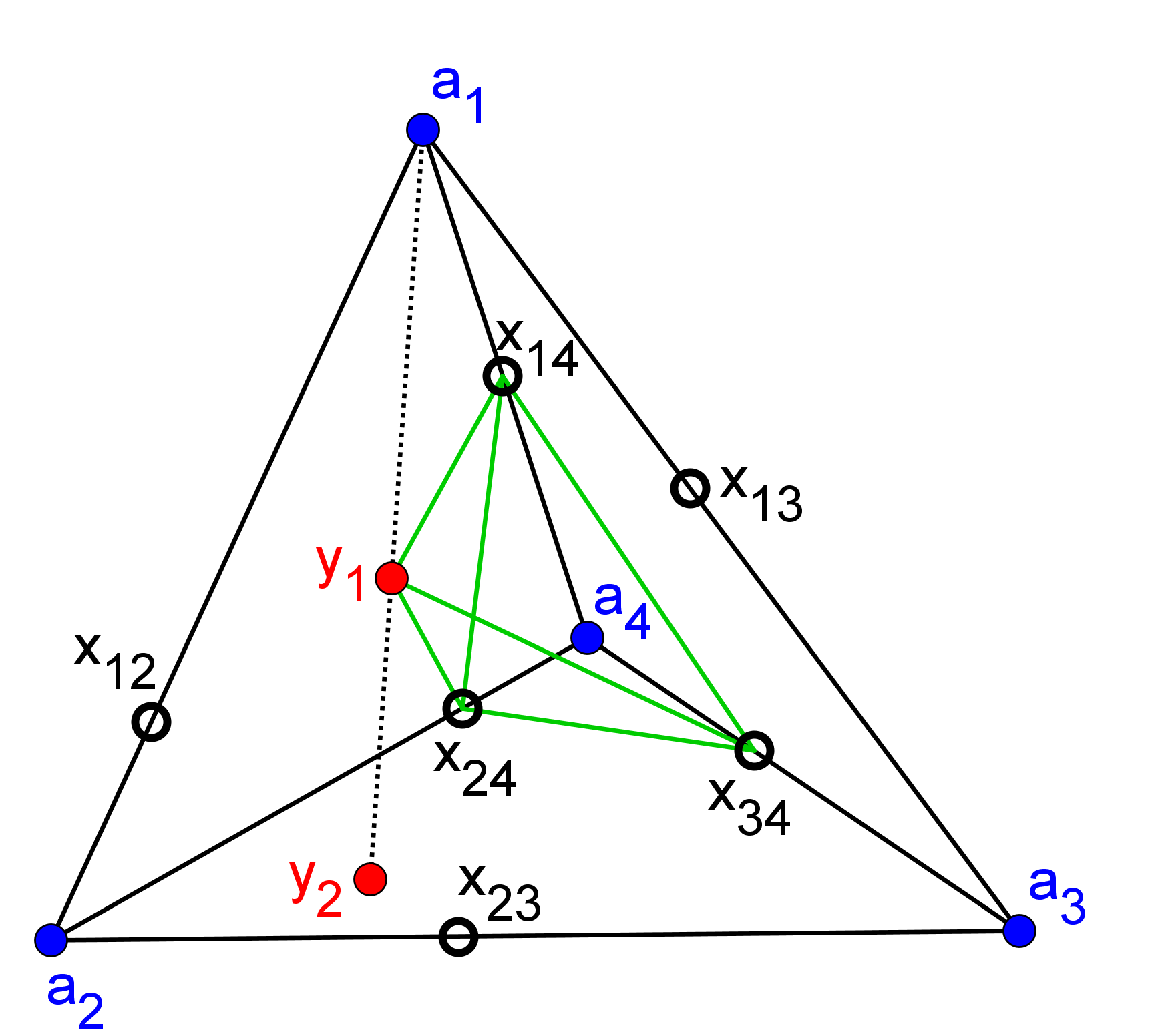}
    \hfill
    \includegraphics[scale=0.69]{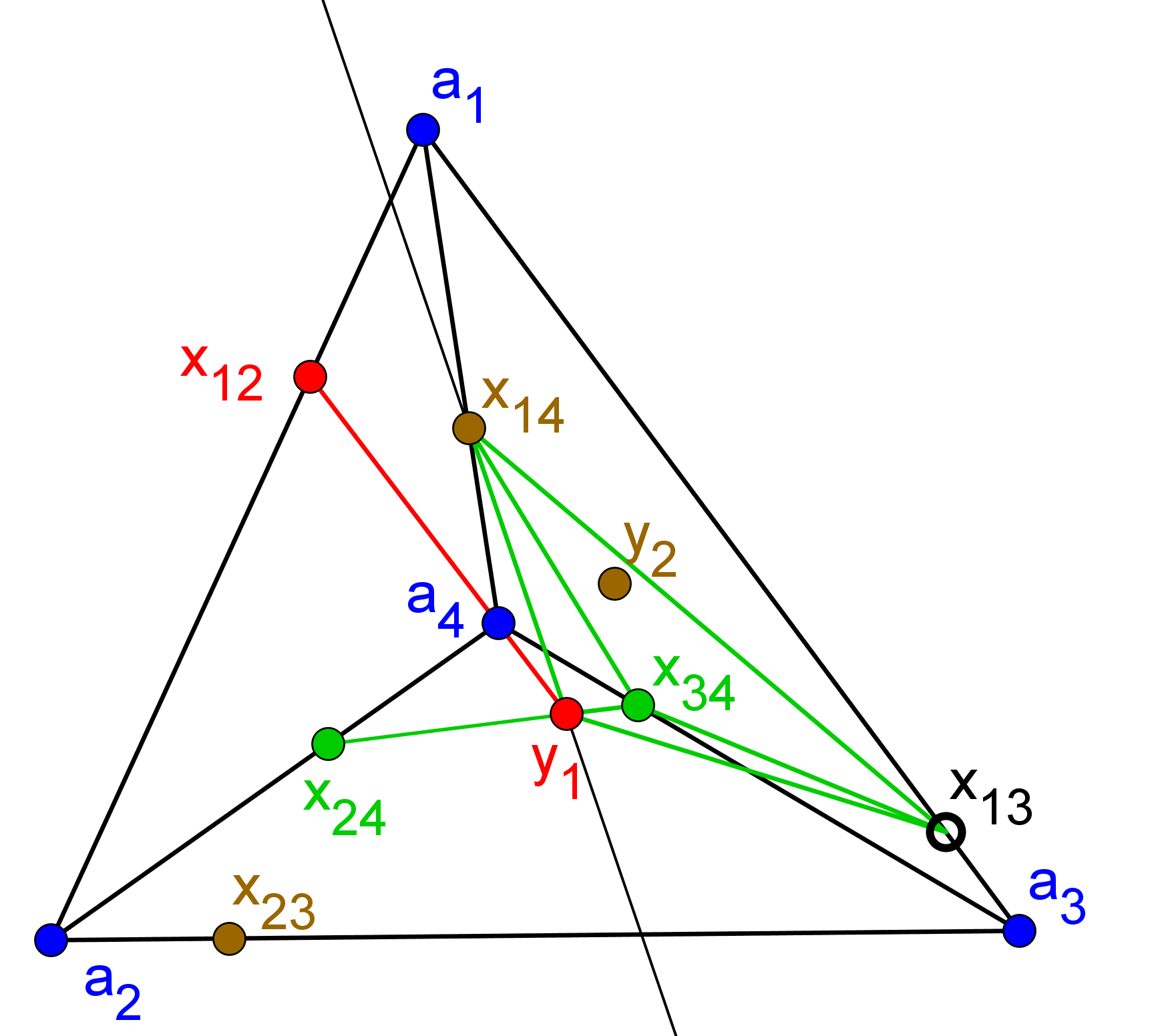}
    \hfill
    \includegraphics[scale=0.69]{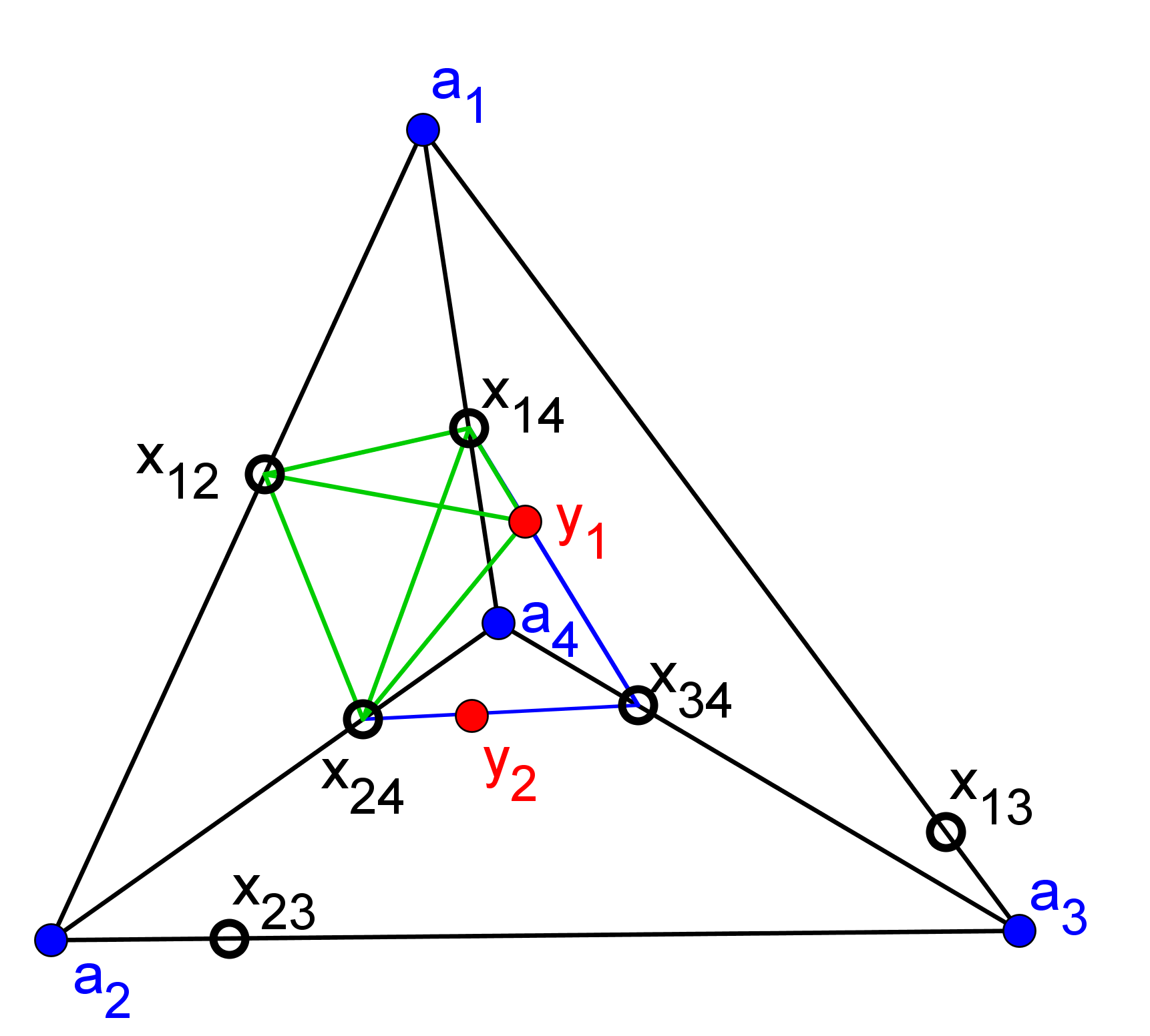}
\hspace*{-2mm}\\
\vspace*{-2ex}
   \caption{Diagrams of $\{4,3,3,2\}$ cases 1, 2, and 3.}
  \end{figure}

  \begin{enumerate}
  \item If neither $y_1$ nor $y_2$ blocks a segment in $\midx_2$, then
    recall that $y_1$ and $y_2$ cannot both block each other.  Thus,
    WLOG, $y_2$ does not block $y_1$.  Now $\midx$ and $y_1$ give us
    four mutually visible points in $Q'$.

  \item If $y_1$ blocks a segment in $\midx_2$ and $y_2$ does not,
    then WLOG $x_{24}*y_1*x_{34}$. \0{ If $y_2$ and $x_{12}$ are on
      opposite sides of $\overleftrightarrow{x_{14}y_1}$, then
      $x_{12}$, $x_{14}$, $x_{24}$, and $y_1$ are four mutually
      visible points in $Q'$.  No: $a_4$ could block
      $\overline{y_1x_{12}}$. On the other hand,}Also WLOG, suppose
    that $a_2$ and $a_4$ are on the same side of
    $\overleftrightarrow{x_{14}y_1}$, so $a_4 \notin \conv(x_{13},
    x_{14}, x_{34}, y_1)$.  Now $y_2 \in \conv(x_{13}, x_{14}, x_{34},
    y_1)$, for otherwise these four points in $Q'$ are mutually
    visible.

    Also note that $\overline{y_1x_{12}}$ is the only edge with
    points in $\{x_{12}, x_{14}, x_{24}, y_1\}$ that can be blocked
    by $a_4$.  To prevent these four points from being mutually
    visible, we require $y_1*a_4*x_{12}$.

    Thus $y_1$ and $x_{12}$ belong to the same colour class, which we
    may suppose is $B$.  Since $x_{12}$, $x_{14}$, and $x_{24}$ are
    mutually visible, we may suppose that $x_{24}\in C$ and
    $x_{14}\in D$.

    Since $x_{24}*y_1*x_{34}$, we have $x_{34}\in C$.  Since $x_{23}$
    sees $x_{24}$ and $y_1$, we have $x_{23}\in D$.

    Now $y_2$ sees $y_1 \in B$ and $x_{34}\in C$, and we conclude
    that $y_2\in D$; however, $y_2$ sees either $x_{14}$ or $x_{23}$,
    and colour class $D$ is not blocked.

  \item If both $y_1$ and $y_2$ block segments in $\midx_2$, then WLOG
    $x_{i4}*y_i*x_{34}$ for $i=1,2$.  Now $\{x_{14}, x_{24}, a_4\}$ is
    insufficient to block $x_{12}$ from $\{y_1, y_2\}$, for this would
    imply some $x_{12}*x_{i4}*y_i*x_{34}$.  Therefore some $y_i$ sees
    $x_{12}$; now $x_{12}$, $x_{14}$, $x_{24}$, and $y_i$ are four
    mutually visible points in $Q'$.
  \end{enumerate}

  The only remaining case is that $Q$ is $\{4,2,2,2\}$-blocked. This is
  possible, as illustrated in \figref{K4222}. We now show that this
  point set is essentially the only $\{4,2,2,2\}$-blocked set, up to
  betweenness-preserving deformations. We have exactly enough points
  in colour classes $B$, $C$, and $D$ to block all edges between
  points in colour class $A$.  As each of the three $A$-triangles
  with point $a_4$ must contain a representative from each of the
  other three colour classes, it follows that of the six $A$-edges,
  $B$ blocks one interior edge and the opposite boundary edge, and
  likewise for $C$ and $D$.  WLOG: $b_1=x_{14}$, $b_2=x_{23}$,
  $c_1=x_{24}$, $c_2=x_{13}$, $d_1=x_{34}$, $d_2=x_{12}$.  Since $b_1$
  blocks $a_4$, $a_4$ cannot block $b_1$.  Since $\overline{b_1b_2}$
  can be blocked only by an interior point of $\conv(A)$, it follows
  that either $c_1$ or $d_1$ blocks $\overline{b_1b_2}$.  As these
  cases are symmetric, we may choose $c_1\in\overline{b_1b_2}$.  Now
  $b_1$ cannot block $c_1$, so $c_1$ must be blocked by $d_1$, which
  must in turn be blocked by $b_1$.

  Now we show that $P=Q$. (This basically says that the point set in
  \figref{K4222} cannot be extended without introducing a new
  colour.)\ Suppose to the contrary that some point $x$ is in $P
  \setminus Q$; thus $x \notin \conv(A)$.  Note that every point
  outside $\conv(A)$ can see a vertex of $\conv(A)$, so $x$ must not
  be in colour class $A$.  WLOG, we suppose $x$ matches the points in
  $B$.  Recall that $b_2$ is on the supporting line
  $\overleftrightarrow{a_2a_3}$, and will see $x$ unless $x$ is in the
  same half-plane (as determined by this line) as the rest of $Q$.
  Who blocks $\overline{b_1x}$?  Not $a_2$ or $a_3$, for this would
  put $x$ in the wrong half-plane.  Not $a_1$, $a_4$, $d_1$, or $d_2$,
  for $b_1$ blocks each of these.  Not $c_1$, which is on
  $\overline{b_1b_2}$.  Therefore $\overline{b_1x}$ can be blocked
  only by $c_2$.  Since $c_1\in\overline{b_1b_2}$ and
  $c_2\in\overline{b_1x}$, it follows that $b_2$ and $x$ (and any
  blocker between them) are on the same side of
  $\overleftrightarrow{c_1c_2}$.  But the only other points of $Q$ in
  that open half-plane are $a_2$ and $a_3$, which cannot block $b_2$.
  Thus $x$ sees $b_2$, which is a contradiction. Thus $P=Q$ and $P$ is
  $\{4,2,2,2\}$-blocked.
\end{proof}

We now prove the main theorem of this section.

\begin{theorem}
A 4-set $\{a,b,c,d\}$ is representable  if and only if
  \begin{itemize}
  \item $\{a,b,c,d\} = \{4,2,2,1\}$, or
  \item $\{a,b,c,d\} = \{4,2,2,2\}$, or
  \item all of $a,b,c,d \leq 3$ except for $\{3,3,3,1\}$
  \end{itemize}
\end{theorem}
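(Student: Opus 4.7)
The plan is to assemble the theorem from the preceding lemmas with a short case analysis on the size and shape of the largest colour class. There are two directions to handle.

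For the \emph{if} direction, each listed family of 4-sets has an explicit realisation. The set $\{4,2,2,1\}$ is represented by \figref{BradsTwistedGrid} (constructed in the proof of \lemref{K4221}); the set $\{4,2,2,2\}$ is represented by \figref{K4222} (as in \lemref{K4222}); and every 4-set $\{a,b,c,d\}$ with $a,b,c,d\leq 3$ other than $\{1,3,3,3\}$ is representable by \propref{SmallColourClasses}. That disposes of the forward direction without any further work.

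For the \emph{only if} direction, let $P$ be a 4-blocked set with colour classes of sizes $a,b,c,d$. By \lemref{FourInColourClass}, no colour class has more than four points, so each of $a,b,c,d$ lies in $\{1,2,3,4\}$. Split on the maximum, which we may assume equals the first coordinate. If $\max\{a,b,c,d\}\leq 3$, then \lemref{1333} rules out $\{1,3,3,3\}$, and every other such 4-set appears in the third bullet; no further argument is needed. If $\max\{a,b,c,d\}=4$, let $S$ be a colour class of size four. \lemref{GenPos} says $S$ is in general position, so either $S$ is in convex position or one point of $S$ lies in the interior of $\conv(S)$. In the convex case, \corref{K4221} (applied with $K:=S$) forces $P$ to be $\{4,2,2,1\}$-blocked. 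In the nonconvex case, we may additionally assume that no colour class of $P$ is a 4-point convex position set (otherwise we are in the previous case and have already concluded), so the hypotheses of \lemref{K4222} are satisfied with $A:=S$, and therefore $P$ is $\{4,2,2,2\}$-blocked. Combining the cases gives exactly the three bullets in the statement.

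Because all the technical content—handling the convex 4-point colour class, the nonconvex 4-point colour class, and the $\{3,3,3,1\}$ obstruction—has already been settled in \corref{K4221}, \lemref{K4222}, \lemref{FourInColourClass}, \lemref{1333} and \propref{SmallColourClasses}, the remaining work is purely organisational: I just need to verify that the convex/nonconvex dichotomy on a size-4 colour class is exhaustive (which is immediate from general position) and that the condition ``no colour class has 4 points in convex position'' in \lemref{K4222} can be freely assumed in the nonconvex subcase without loss of generality. Neither step presents any real obstacle, so the proof is essentially a clean bookkeeping argument gluing the preceding results together.
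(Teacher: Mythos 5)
Your proposal is correct and follows essentially the same route as the paper: realisations from \figref{BradsTwistedGrid}, \figref{K4222} and \propref{SmallColourClasses} for the ``if'' direction, then \lemref{FourInColourClass}, \lemref{1333}/\propref{SmallColourClasses}, \corref{K4221} and \lemref{K4222} for the ``only if'' direction via the convex/nonconvex split on a 4-point colour class. Your extra remark that one may assume no colour class is a 4-point convex set before invoking \lemref{K4222} is a correct (and slightly more careful) handling of that lemma's hypothesis, but it does not change the argument.
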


\begin{proof}
  \twofigref{BradsTwistedGrid}{K4222} respectively show
  $\{4,2,2,1\}$-blocked and $\{4,2,2,2\}$-blocked point sets.  When
  $a,b,c,d \leq 3$, the required constructions are described in
  \propref{SmallColourClasses}.  Now we prove that these are the only
  representable 4-sets.  Let $P$ be a $4$-blocked point set.  By
  \lemref{FourInColourClass}, each colour class has at most four
  points. Let $S$ be the largest colour class.  If $|S|\leq 3$ then we
  are done by \propref{SmallColourClasses}.  Now assume that $|S|=4$.
  If $S$ is in nonconvex position, then $P$ is $\{4,2,2,2\}$-blocked by
  \lemref{K4222}.  If $S$ is in convex position, then $P$ is
  $\{4,2,2,1\}$-blocked by \corref{K4221}.
\end{proof}

\begin{corollary} 
  Every 4-blocked set has at most 12 points, and there is a
  4-blocked set with 12 points.
\end{corollary}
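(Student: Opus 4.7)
The plan is to deduce this corollary as an immediate bookkeeping consequence of the main theorem together with the existence of the configuration in \figref{K3333}.

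First I would enumerate the representable 4-sets using the main theorem. The theorem asserts that a representable 4-set either equals $\{4,2,2,1\}$ (total 9 points), equals $\{4,2,2,2\}$ (total 10 points), or has every entry at most 3 and is not $\{3,3,3,1\}$. In the last family, the maximum total is achieved by $\{3,3,3,3\}$, giving 12 points; all other entries in this family sum to at most $2+3+3+3=11$. Comparing 9, 10, and 12, the maximum over all representable 4-sets is $12$, attained uniquely by $\{3,3,3,3\}$.

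Second, for the upper bound I would observe that any 4-blocked set $P$ has, by definition, colour class sizes forming a representable 4-set, so $|P|$ is the sum of those sizes and is therefore at most $12$ by the enumeration above.

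Third, for tightness I would simply cite \figref{K3333}, which exhibits a $\{3,3,3,3\}$-blocked point set with exactly 12 points, demonstrating that the bound is achieved.

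There is no real obstacle here: the corollary is a direct packaging of the just-proved theorem plus the explicit construction in \figref{K3333}, so the only task is to check that the arithmetic picks out $\{3,3,3,3\}$ as the unique largest representable 4-set.
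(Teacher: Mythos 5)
Your proposal is correct and matches the paper's intent exactly: the corollary is stated as an immediate consequence of the characterisation theorem (the paper gives no separate proof), with the upper bound coming from the fact that $\{3,3,3,3\}$ is the largest representable 4-set and tightness from the $\{3,3,3,3\}$-blocked configuration of \figref{K3333}. Your arithmetic check that $\{4,2,2,1\}$, $\{4,2,2,2\}$ and all 4-sets with entries at most 3 total at most 12 is exactly the bookkeeping needed.
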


Note that in addition to the $\{3,3,3,3\}$-blocked set shown in
\figref{K3333}, there is a different $\{3,3,3,3\}$-blocked point set, as
illustrated in \figref{3333b}.

\begin{figure}[!ht]
  \includegraphics[scale=0.6]{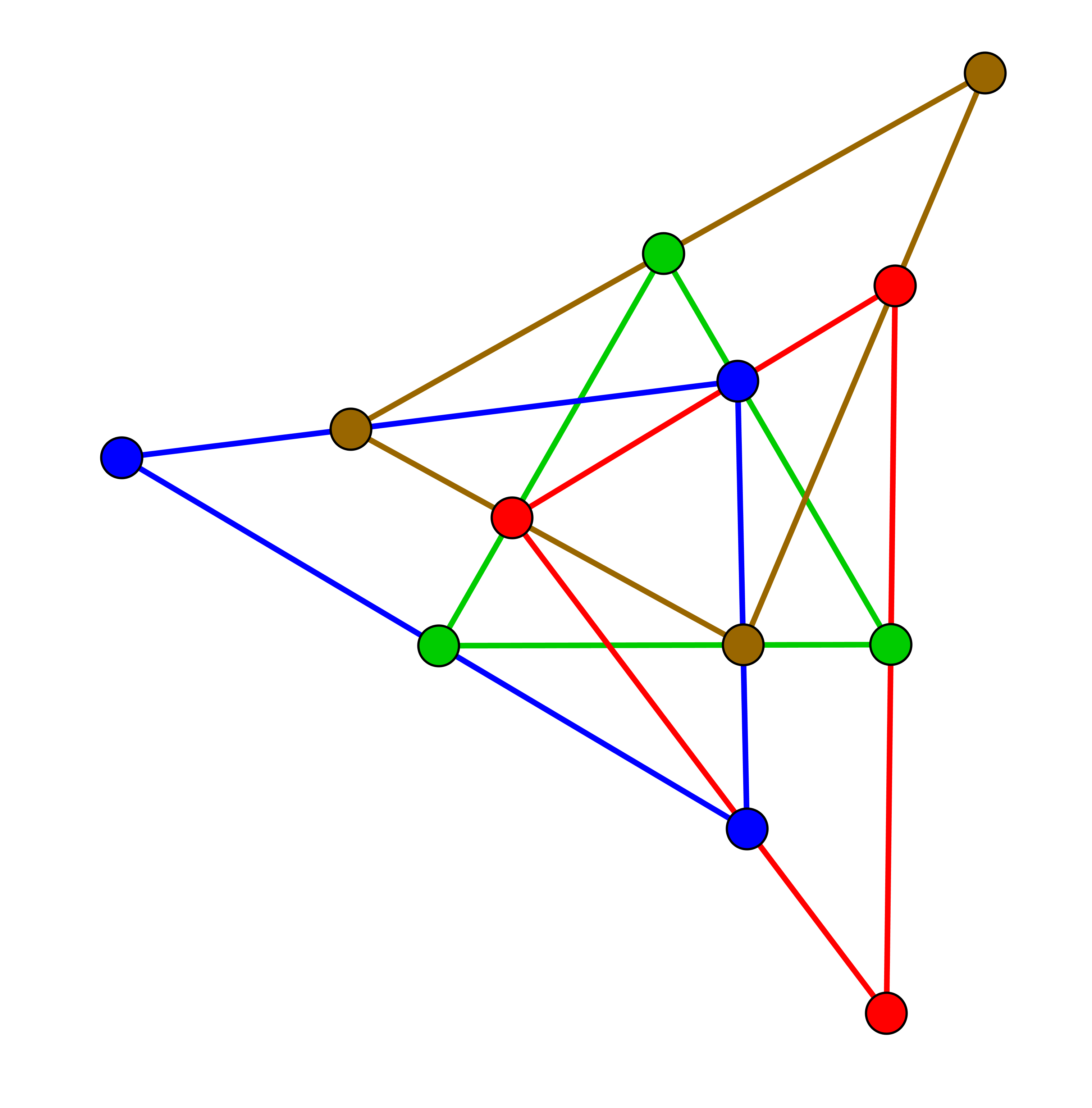}
\vspace*{-2ex}
  \caption{\figlabel{3333b} Another $\{3,3,3,3\}$-blocked point
    set.}
\end{figure}

\section{Midpoint-Blocked Point Sets}
\seclabel{Midpoint}

A $k$-blocked point set $P$ is \emph{$k$-midpoint-blocked} if for each
monochromatic pair of distinct points $v,w\in P$ the midpoint of
$\overline{vw}$ is in $P$.  Of course, the midpoint of $\overline{vw}$
blocks $v$ and $w$. A point set $P$ is
\emph{$\{n_1,\dots,n_k\}$-midpoint-blocked} if it is
$\{n_1,\dots,n_k\}$-blocked and $k$-midpoint-blocked. 
For example, the
point set in \figref{K3333} is $\{3,3,3,3\}$-midpoint-blocked.  

Another interesting example is the projection\footnote{If $G$ is the
  visibility graph of some point set $P\subseteq\R^d$, then $G$ is the
  visibility graph of some projection of $P$ to $\R^2$ (since a random
  projection of $P$ to $\R^2$ is occlusion-free with probability
  $1$).} of $[3]^d$. With $d=1$ this point set is $\{2,1\}$-blocked,
with $d=2$ it is $\{4,2,2,1\}$-blocked, and with $d=3$ it is
$\{8,4,4,4,2,2,2,1\}$-blocked.  In general, each set of points with
exactly the same set of coordinates equal to $2$ is a colour
class, there are $2^{d-i}$ colour classes of points with exactly $i$
coordinates equal to $2$, and $[3]^d$ is $\{\binom{d}{i}\times
2^i:i\in[0,d]\}$-midpoint-blocked and $2^d$-midpoint-blocked.

We now prove \conjref{kBlocked} when restricted to
$k$-midpoint-blocked point sets. (Finally we have weakened
\conjref{BigClique} to something proveable!)\

\citet{HHUZ} introduced the following definition. Let $m(n)$ be the
minimum number of midpoints determined by some set of $n$ points in
general position in the plane. Since the midpoint of $\overline{vw}$
blocks $v$ and $w$, we have $b(n)\leq m(n)$. \citet{HHUZ} constructed
a set of $n$ points in general position in the plane that determine at
most $cn^{\log 3}$ midpoints for some contant $c$. (All logarithms
here are binary.)\ Thus $b(n)\leq
m(n)\leq cn^{\log3} =cn^{1.585\ldots}$. This upper bound was
improved by \citet{Pach-Midpoints-Geom03} (and later by
\citet{Matousek09}) to
$$b(n)\leq m(n)\leq nc^{\sqrt{\log n}}\enspace.$$ 
\citet{HHUZ} conjectured that $m(n)$ is super-linear, which was
verified by \citet{Pach-Midpoints-Geom03}; that is,
$\frac{m(n)}{n}\rightarrow\infty$ as $n\rightarrow\infty$.
\citet{PorWood-Blockers} proved the following more precise version:
For some constant $c>0$, for all $\epsilon>0$ there is an integer
$N(\epsilon)$ such that $m(n)\geq c n(\log n)^{1/(3+\epsilon)}$ for
all $n\geq N(\epsilon)$.

\begin{theorem} 
  For each $k$ there is an integer $n$ such that every
  $k$-midpoint-blocked set has at most $n$ points. More precisely,
  there is an absolute constant $c$ and for each $\epsilon>0$ there is
  an an integer $N(\epsilon)$, such that for all $k$, every
  $k$-midpoint-blocked set has at most
  $k\max\{N(\epsilon),c^{(k-1)^{3+\epsilon}}\}$ points.
\end{theorem}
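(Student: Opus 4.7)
The plan is to reduce the claim to the Pór--Wood lower bound on the midpoint function $m(n)$, applied to the largest colour class. Let $P$ be a $k$-midpoint-blocked point set, and let $S$ be a largest colour class, so $|S| \geq |P|/k$. By \lemref{GenPos}, $S$ is in general position, so it determines at least $m(|S|)$ distinct midpoints.

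The key observation is that every such midpoint lies in $P \setminus S$. For distinct $v,w \in S$ write $x$ for the midpoint of $\overline{vw}$; by the midpoint-blocked hypothesis $x \in P$. To see $x \notin S$, note that \lemref{SelfBlockingThree} forbids a fourth point of $P$ from being collinear with $v, x, w$, so in particular no point of $P$ lies strictly between $v$ and $x$. Hence $v$ and $x$ are visible in $P$; since $S$ is an independent set in the visibility graph and $v \in S$, the point $x$ must belong to a different colour class. Thus $|P \setminus S| \geq m(|S|)$, and combined with $|P| \leq k|S|$ this gives the crucial inequality
\[
(k-1)|S| \;\geq\; m(|S|).
\]

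I would then apply the Pór--Wood bound, with some absolute $c_0>0$: $m(n) \geq c_0\, n (\log n)^{1/(3+\epsilon)}$ for every $n \geq N(\epsilon)$. Either $|S| < N(\epsilon)$, in which case $|P| \leq k N(\epsilon)$ directly; or $|S| \geq N(\epsilon)$, in which case the displayed inequality rearranges to $\log |S| \leq \bigl((k-1)/c_0\bigr)^{3+\epsilon}$, and hence $|S| \leq 2^{((k-1)/c_0)^{3+\epsilon}}$. Multiplying by $k$ gives the claimed bound on $|P|$.

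The only subtlety is cosmetic: to present the bound in the stated form $k \max\{N(\epsilon), c^{(k-1)^{3+\epsilon}}\}$ with a single absolute $c$, I would restrict attention to $\epsilon \leq 1$ (bounds for $\epsilon > 1$ are weaker and follow from the $\epsilon=1$ case), set $c := 2^{1/c_0^{4}}$, and absorb any remaining $\epsilon$-dependent multiplicative constants into $N(\epsilon)$. The argument has no real conceptual obstacle --- it hinges entirely on the observation that the midpoint of a monochromatic pair cannot share their colour, which is immediate from \lemref{SelfBlockingThree}; everything else is the standard Pór--Wood estimate combined with the trivial pigeonhole $|S| \geq |P|/k$.
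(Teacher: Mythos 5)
Your proposal is correct and follows essentially the same route as the paper: take a largest (hence $\geq |P|/k$-point) colour class, note it is in general position and all its midpoints lie outside it, and feed this into the P\'or--Wood lower bound on $m(n)$ to bound the class size. The only cosmetic difference is that you deduce ``the midpoint is differently coloured'' via \lemref{SelfBlockingThree} and visibility, where the paper gets it directly from \lemref{GenPos} (a midpoint inside the class would give three collinear monochromatic points); both are fine.
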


\begin{proof}
  Let $P$ be $k$-midpoint-blocked set of $n$ points. If $n\leq k
  N(\epsilon)$ then we are done. Now assume that $\frac{n}{k}>
  N(\epsilon)$.  Let $S$ be a set of exactly $s:=\ceil{\frac{n}{k}}$
  monochromatic points in $P$.  Thus $S$ is in general position by
  \lemref{GenPos}.  And for every pair of distinct points $v,w \in S$
  the midpoint of $\overline{vw}$ is in $P-S$.  Thus
$$ c \tfrac{n}{k}(\log\tfrac{n}{k})^{1/(3+\epsilon)}\leq m(s) \leq
n-s\leq n(1-\tfrac{1}{k})\enspace.$$ Hence
$(\log\tfrac{n}{k})^{1/(3+\epsilon)}\leq(k-1)/c$, implying
$n\leq k 2^{((k-1)/c)^{3+\epsilon}}$. The result follows.
\end{proof}




We now construct $k$-midpoint-blocked point sets with a `large' number
of points.  The method is based on the following product of point sets
$P$ and $Q$.  For each point $v\in P\cup Q$, let $(x_v,y_v)$ be the
coordinates of $v$.  Let $P\times Q$ be the point set $\{(v,w):v\in
P,w\in Q\}$ where $(v,w)$ is at $(x_v,y_v,x_w,y_w)$ in $4$-dimensional
space. For brevity we do not distinguish between a point in $\R^4$ and
its image in an occlusion-free projection of the visibility graph of
$P\times Q$ into $\R^2$.

\begin{lemma}
  \lemlabel{ProductLemma} If $P$ is a
  $\{n_1,\dots,n_k\}$-midpoint-blocked point set and $Q$ is a
  $\{m_1,\dots,m_{\ell}\}$-midpoint-blocked point set, then $P\times Q$
  is $\{n_im_j:i\in[k],j\in[\ell]\}$-midpoint-blocked.
\end{lemma}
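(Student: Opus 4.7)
\medskip\noindent\textbf{Proof plan.}
The plan is to use the natural ``product'' colouring on $P\times Q$: colour the point $(v,w)$ by the pair $(\alpha(v),\beta(w))$, where $\alpha\colon P\to[k]$ and $\beta\colon Q\to[\ell]$ are the colourings of $P$ and $Q$. Since the $i$-th colour class of $P$ has $n_i$ points and the $j$-th colour class of $Q$ has $m_j$ points, the colour class $(i,j)$ of $P\times Q$ has exactly $n_im_j$ points, which matches the statement. Recall that for the projected picture, points of $P\times Q$ in $\mathbb{R}^4$ are identified with their images under a generic projection to $\mathbb{R}^2$, which preserves collinearity and betweenness; so it suffices to verify the midpoint-blocking properties in $\mathbb{R}^4$.

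Next I would verify the same-colour (blocked by the midpoint) condition. Let $(v_1,w_1)\ne (v_2,w_2)$ have the same colour, so $\alpha(v_1)=\alpha(v_2)$ and $\beta(w_1)=\beta(w_2)$. Define $m_v$ to be $v_1$ if $v_1=v_2$, and otherwise the midpoint of $\overline{v_1v_2}$, which lies in $P$ since $P$ is midpoint-blocked. Define $m_w$ symmetrically. Then $(m_v,m_w)\in P\times Q$ is the midpoint of $(v_1,w_1)$ and $(v_2,w_2)$ in $\mathbb{R}^4$, and since at least one of $v_1\ne v_2$ or $w_1\ne w_2$ holds, $(m_v,m_w)$ is distinct from both. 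This establishes both that same-colour pairs are blocked and that the blocker can be taken to be the midpoint.

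Finally, I would verify the different-colour (visibility) condition, which is where the argument is most delicate but still short. Suppose $(v_1,w_1)$ and $(v_2,w_2)$ have different colours and that some $(u,z)\in P\times Q$ lies in the open segment between them, so $u=(1-t)v_1+tv_2$ and $z=(1-t)w_1+tw_2$ for some $t\in(0,1)$. If $\alpha(v_1)\ne\alpha(v_2)$, then $v_1\ne v_2$, the points $v_1,v_2$ are visible in $P$, and yet $u\in P$ lies strictly between them, a contradiction; if instead $\alpha(v_1)=\alpha(v_2)$, then necessarily $\beta(w_1)\ne\beta(w_2)$, so $w_1\ne w_2$ are visible in $Q$ while $z\in Q$ lies strictly between them, again a contradiction. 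Hence no such blocker exists and the two points are visible in $P\times Q$. Combining the three steps, $P\times Q$ is $\{n_im_j:i\in[k],j\in[\ell]\}$-midpoint-blocked, as claimed. The only subtle point is keeping track of the degenerate cases $v_1=v_2$ or $w_1=w_2$ in the same-colour step, which are handled uniformly by the convention that a ``midpoint'' of a point with itself is the point.
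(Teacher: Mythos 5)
Your proposal is correct and follows essentially the same route as the paper: colour $(v,w)$ by the pair of colours of $v$ and $w$, observe that the midpoint of a monochromatic pair is the pair of midpoints (hence lies in $P\times Q$), and show conversely that any blocker in $\mathbb{R}^4$ would project to a blocker in $P$ or in $Q$, so differently coloured points remain visible. Your explicit treatment of the degenerate cases $v_1=v_2$ or $w_1=w_2$ is a minor refinement of the paper's argument, not a different approach.
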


\begin{proof}
  Colour each point $(v,w)$ in $P\times Q$ by the pair
  $(\col(v),\col(w))$.  Thus there are $n_im_j$ points for the
  $(i,j)$-th colour class.  Consider distinct points $(v,w)$ and
  $(a,b)$ in $P\times Q$.

  Suppose that $\col(v,w)=\col(a,b)$. Thus $\col(v)=\col(a)$ and
  $\col(w)=\col(b)$.  Since $P$ and $Q$ are midpoint-blocked,
  $\half(v+a)\in P$ and $\half(w+b)\in Q$.  Thus
  $(\half(v+a),\half(w+b))$, which is positioned at
  $(\half(x_v+x_a),\half(y_v+y_a),\half(x_w+x_b),\half(y_w+y_b))$, is
  in $P\times Q$.  This point is the midpoint of
  $\overline{(v,w)(a,b)}$.  Thus $(v,w)$ and $(a,b)$ are blocked by
  their midpoint in $P\times Q$.

  Conversely, suppose that some point $(r,s)\in P\times Q$ blocks
  $(v,w)$ and $(a,b)$.  Thus $x_r=\alpha x_v+(1-\alpha)x_a$ for some
  $\alpha\in(0,1)$, and $y_r=\beta y_v+(1-\beta)y_a$ for some
  $\beta\in(0,1)$, and $x_s=\delta x_w+(1-\delta)x_b$ for some
  $\delta\in(0,1)$, and $y_s=\gamma y_w+(1-\gamma)y_b$ for some
  $\gamma\in(0,1)$.  Hence $r$ blocks $v$ and $a$ in $P$, and $s$
  blocks $w$ and $b$ in $Q$.  Thus $\col(v)=\col(a)\neq\col(r)$ in
  $P$, and $\col(w)=\col(b)\neq\col(s)$ in $Q$, implying
  $(\col(v),\col(w))=(\col(a),\col(b))$.

  We have shown that two points in $P\times Q$ are blocked if and only
  if they have the same colour.  Thus $P\times Q$ is blocked.  Since
  every blocker is a midpoint, $P\times Q$ is midpoint-blocked.
\end{proof}

Say $P$ is a $k$-midpoint blocked set of $n$ points.  By
\lemref{ProductLemma}, the $i$-fold product $P^i:=P\times\dots\times
P$ is a $k^i$-blocked set of $n^i=(k^i)^{\log_k n}$ points. Taking $P$
to be the $\{3,3,3,3\}$-midpoint-blocked point set in \figref{K3333}, we
obtain the following result:

\begin{theorem}
  For all $k$ a power of $4$, there is a $k$-blocked set of $k^{\log_4
    12}=k^{1.79\ldots}$ points.
\end{theorem}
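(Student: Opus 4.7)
The plan is to apply the product construction of \lemref{ProductLemma} iteratively, starting from the $12$-point $\{3,3,3,3\}$-midpoint-blocked set $P$ shown in \figref{K3333}. Given a power of $4$, write $k = 4^i$, and let $P^i := P\times P\times\cdots\times P$ denote the $i$-fold iterated product (interpreted in $\R^{4i}$ and then occlusion-free-projected to the plane, as in the footnote preceding \lemref{ProductLemma}). The goal is to show that $P^i$ is a $k$-midpoint-blocked set with $k^{\log_4 12}$ points.

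First I would verify by induction on $i$ that $P^i$ is $4^i$-midpoint-blocked with exactly $12^i$ points. The base case $i=1$ is immediate from the description of \figref{K3333}. For the inductive step, assume $P^{i-1}$ is $4^{i-1}$-midpoint-blocked; then \lemref{ProductLemma} applied to $P$ (a $\{3,3,3,3\}$-midpoint-blocked set) and $P^{i-1}$ yields that $P\times P^{i-1}$ is midpoint-blocked with colour classes indexed by pairs, giving $4\cdot 4^{i-1} = 4^i$ classes in total, and $12\cdot 12^{i-1} = 12^i$ points. Since the product operation is associative up to relabelling coordinates, this iterated product coincides with $P^i$.

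To finish, I would rewrite the point count in the form claimed by the theorem: $12^i = (4^i)^{\log_4 12} = k^{\log_4 12}$, and observe that $\log_4 12 = 1 + \tfrac{1}{2}\log_2 3 = 1.7925\ldots$, yielding the stated exponent.

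There is no real obstacle here, since \lemref{ProductLemma} does all the geometric work; the only thing worth being careful about is that the product, as defined, lives in $\R^4$, so iteration nominally produces a set in $\R^{2i}$. This is handled uniformly by taking a generic projection to $\R^2$ at the end (or at each stage), which preserves the visibility graph almost surely, as remarked in the footnote preceding \lemref{ProductLemma}.
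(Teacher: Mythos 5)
Your proposal is correct and follows essentially the same route as the paper: iterate \lemref{ProductLemma} on the $\{3,3,3,3\}$-midpoint-blocked $12$-point set of \figref{K3333} to get a $4^i$-midpoint-blocked set of $12^i=(4^i)^{\log_4 12}$ points, with the generic-projection footnote handling the dimension issue. Your inductive bookkeeping and the computation $\log_4 12=1+\tfrac12\log_2 3$ are both fine.
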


This result describes the largest known construction of $k$-blocked or
$k$-midpoint blocked point sets.  To promote further research, we make
the following strong conjectures:

\begin{conjecture}
  \conjlabel{SizeBlocked} Every $k$-blocked point set has $O(k^2)$
  points.
\end{conjecture}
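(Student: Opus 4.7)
The plan is to reduce the conjecture to bounding the size of the largest colour class: if every colour class in a $k$-blocked set has $O(k)$ points, then summing over the $k$ classes immediately yields the $O(k^2)$ bound. Thus the core task is to show that any colour class $S$ of size $m$ in a $k$-blocked set satisfies $m \leq Ck$ for an absolute constant $C$.

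The first step exploits the lemmas already established. By \lemref{GenPos}, $S$ is in general position, and by \lemref{SelfBlockingThree}, every line in the plane meets $P$ in at most three points, hence $S$ in at most two points. Every one of the $\binom{m}{2}$ $S$-pairs is blocked by some point of $P\setminus S$ lying strictly between its endpoints, and each candidate blocker $x\in P\setminus S$ contributes at most one $S$-pair per line through $x$. Let $t$ denote the maximum number of $S$-pairs blocked by a single point of $P\setminus S$. Double-counting pairs against their blockers gives $|P\setminus S|\geq \binom{m}{2}/t$; combined with $|P|\leq km$ (since $S$ is the largest class), this yields $m \leq 2tk + 1$. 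So the whole conjecture would follow from a constant upper bound on $t$, or more generally from $t = O(1)$ on average.

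The main obstacle is precisely this bound on $t$. A priori a blocker $x$ could lie on many different lines, each containing a pair of $S$-points (imagine $x$ near the centre of an almost-circular arrangement of $S$), and no lemma in the paper directly prevents this. To control $t$, I would use that $x$ belongs to some other colour class $C_j$, which is itself in general position by \lemref{GenPos} and each of whose pairs must be blocked by $P\setminus C_j$. A large ``fan'' of $S$-pairs through $x$ forces many $C_j$-points to lie in constrained positions, which in turn must be visible or blocked by points in yet other colour classes; I would iterate this argument across colour classes and chase a Ramsey-type contradiction using repeated applications of Lemmas~\ref{lem:SelfBlockingThree} and \ref{lem:GenPos}. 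The delicate part is that the iteration must be uniform in $k$: any bound on $t$ depending on $m$ (say $t = \Theta(m)$ in the worst case) only recovers the trivial $m = O(k\cdot m)$.

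A complementary route is induction on $k$, leveraging \thmref{23} and the $k=4$ characterisation as base cases. The difficulty there is that deleting a colour class from a $k$-blocked set does not generally yield a $(k-1)$-blocked set -- removing a class can expose monochromatic pairs of other classes that were only blocked by the deleted class -- so a successful induction requires a canonical ``compression'' operation that loses at most a constant factor of colour classes per step. This, together with the obstacle of bounding $t$, is why I expect a proof of \conjref{SizeBlocked} to require a genuinely new geometric input, plausibly a near-optimal lower bound on the blocker function $b(n)$ of \conjref{Blockers} that is strong enough to translate into a per-class bound rather than merely a global one.
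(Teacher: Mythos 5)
The statement you are asked to prove is \conjref{SizeBlocked}, which the paper states explicitly as an open conjecture (``To promote further research, we make the following strong conjectures''); the paper contains no proof of it, so there is nothing to compare your argument against except the paper's own partial evidence. Your opening reduction --- bound each colour class by $O(k)$ and sum --- is exactly the paper's observation that \conjref{SizeColorClass} implies \conjref{SizeBlocked}, so that part is sound but merely restates the harder open problem. Your double-counting step is also correct as far as it goes: since at most three points of $P$ are collinear (\lemref{SelfBlockingThree}), a blocker lies on at most one monochromatic pair per line through it, and $|P\setminus S|\geq\binom{m}{2}/t$ with $m\leq 2tk+1$ follows.

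The genuine gap is the one you yourself flag: there is no bound on $t$, and none is available from the paper or the literature it cites. A single point can block $\Theta(m)$ pairs of a set in general position (the construction of \citet{HHUZ} with only $O(n^{\log 3})$ midpoints shows blockers must be heavily reused), and with $t=\Theta(m)$ your inequality collapses to the vacuous $m=O(km)$. The strongest known lower bounds on the blocker function are linear ($b(n)\geq(\frac{25}{8}-o(1))n$, as used in \lemref{SelfBlockingLargeColourClass}), which yield only $n\geq(\frac{33}{8}-o(1))m$ and hence no bound on $m$ in terms of $k$; even the super-linear \conjref{Blockers} is itself open. Your proposed Ramsey-type iteration across colour classes and your induction on $k$ are both left entirely unexecuted, and you correctly note why the induction fails (deleting a class destroys the blocked property). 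So the proposal is an honest research plan, not a proof; the conjecture remains open, and your write-up should present it as such rather than as an argument with a fillable hole.
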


\begin{conjecture}
  \conjlabel{SizeColorClass} In every $k$-blocked point set there are
  at most $k$ points in each colour class.
\end{conjecture}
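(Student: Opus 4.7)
My plan is to proceed by induction on $k$. The base cases $k \in \{2,3,4\}$ follow from results already established: \thmref{23} gives colour class size at most $2$ when $k \leq 3$, and \lemref{FourInColourClass} gives at most $4$ points per colour class when $k = 4$. In each of these cases the bound of $k$ is (nearly) tight, so the induction has a solid start and there is no slack to burn.

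For the inductive step, suppose some $k$-blocked set $P$ has a colour class $S$ with $|S| \geq k+1$. By \lemref{GenPos}, $S$ is in general position. I would attempt to find an empty convex polygon $K \subseteq S$ (empty with respect to $S$) with many vertices, mirroring the strategy of \lemref{K4221} and \lemref{K4222}. Then $T := P \cap (\conv(K) \setminus K)$ is disjoint from $S$, so $T$ is a $(k-1)$-blocked set. Each edge of $K$ requires its own blocker lying in $T$, giving $|T| \geq |K|$; more generally, every one of the $\binom{|K|}{2}$ pairs in $K$ must be blocked by a point of $T$. Applying the inductive hypothesis (and perhaps \conjref{SizeBlocked}) should yield the desired contradiction as soon as $|K|$ is sufficiently large relative to $k$.

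The main obstacle, and the reason this remains conjectural, is that arbitrarily large empty convex polygons need not exist in general position: the deepest known result of Gerken and Nicol\'as only guarantees an empty convex hexagon, and no empty $7$-gon is forced. Hence one cannot simply drive $|K|$ upward with $k$. A more promising route is to combine the empty-hexagon theorem with the blocker lower bound $b(n) \geq (\tfrac{25}{8} - o(1))n$ of \citet{DPT09}: if $|S|$ is large, then $S$ alone forces $\Omega(|S|)$ blockers to be split among only $k-1$ other colour classes, so by pigeonhole some other class is also large and the induction might be bootstrapped. The delicate part is that a single interior point can block many pairs simultaneously (as happens in the $[3]^d$-example of \secref{Midpoint}), so the accounting is not straightforward. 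As a concrete test of the method I would first attack $k = 5$, adapting the case analysis of \lemref{K4222} with empty hexagons playing the role of empty quadrilaterals, and only then attempt to extrapolate the argument to general $k$.
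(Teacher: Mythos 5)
This statement is \conjref{SizeColorClass}, which the paper explicitly leaves open (it is offered ``to promote further research''); there is no proof in the paper to compare against, and your text is, by your own admission, a research plan rather than a proof. The concrete gaps are the ones you half-acknowledge, and they are fatal to the plan as stated. First, the inductive engine does not exist: by Horton's construction there are arbitrarily large point sets in general position with no empty convex $7$-gon, so the empty-polygon $K\subseteq S$ that drives the step from $k-1$ to $k$ cannot be made large as $k$ grows; the method that works for $k=4$ (empty quadrilaterals via Esther Klein's theorem, as in \lemref{K4221} and \lemref{FourInColourClass}) and could conceivably be pushed to $k=5$ or $6$ via empty pentagons/hexagons simply has no analogue beyond that. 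Second, the pigeonhole bootstrap from the blocker bound of \citet{DPT09} does not close: if $|S|=m\geq k+1$, the bound only forces the other $k-1$ classes to contain $\Omega(m)$ points in total, so some other class has $\Omega(m/k)$ points --- which is perfectly consistent with every class having at most $k$ points and yields no contradiction and no usable induction. Third, even the weaker \conjref{kBlocked} (bounded total size) is open for $k\geq 5$ in this paper, and \conjref{SizeColorClass} is strictly stronger (it implies \conjref{SizeBlocked}, which implies \conjref{kBlocked}); a correct proof would therefore have to go well beyond every technique the paper uses, including the superlinear midpoint bounds of \secref{Midpoint}, which apply only to midpoint-blocked sets. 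Your base cases ($k\leq 4$, via \thmref{23} and \lemref{FourInColourClass}) are correct, and your observation that a single interior point can block many monochromatic pairs (as in the $[3]^d$ example, which shows the conjectured bound would be tight) correctly identifies why naive blocker counting cannot suffice; but nothing in the proposal bridges these obstacles, so the statement remains, as in the paper, a conjecture.
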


\conjref{SizeColorClass} would be tight for the projection of $[3]^d$
with $k=2^d$.  Of course, \conjref{SizeColorClass} implies
\conjref{SizeBlocked}, which implies \conjref{kBlocked}.

\section*{Acknowledgements} This research was initiated at The 24th
Bellairs Winter Workshop on Computational Geometry, held in February
2009 at the Bellairs Research Institute of McGill University in
Barbados. The authors are grateful to Godfried Toussaint and Erik
Demaine for organising the workshop, and to the other 
participants for providing a stimulating working environment.


\begin{thebibliography}{11}
\providecommand{\natexlab}[1]{#1}
\providecommand{\url}[1]{\texttt{#1}}
\providecommand{\urlprefix}{}
\expandafter\ifx\csname urlstyle\endcsname\relax
  \providecommand{\doi}[1]{doi:\discretionary{}{}{}#1}\else
  \providecommand{\doi}{doi:\discretionary{}{}{}\begingroup
  \urlstyle{rm}\Url}\fi

\bibitem[{Abel et~al.(2009)Abel, Ballinger, Bose, Collette, Dujmovi\'{c},
  Hurtado, Kominers, Langerman, P\'or, and Wood}]{EmptyPentagon}
\textsc{Zachary Abel, Brad Ballinger, Prosenjit Bose, S\'ebastien Collette,
  Vida Dujmovi\'{c}, Ferran Hurtado, Scott~D. Kominers, Stefan Langerman,
  Attila P\'or, and David~R. Wood}.
\newblock Every large point set contains many collinear points or an empty
  pentagon.
\newblock In \emph{Proc. 21st Canadian Conference on Computational Geometry
  (CCCG '09)}, pp. 99--102. 2009.
\newblock \urlprefix\url{http://arxiv.org/abs/0904.0262}.

\bibitem[{Dumitrescu et~al.(2009)Dumitrescu, Pach, and T{\'o}th}]{DPT09}
\textsc{Adrian Dumitrescu, J{\'a}nos Pach, and G{\'e}za T{\'o}th}.
\newblock A note on blocking visibility between points.
\newblock \emph{Geombinatorics}, 19(1):67--73, 2009.
\newblock \urlprefix\url{http://www.cs.uwm.edu/faculty/ad/blocking.pdf}.

\bibitem[{Erd{\H{o}}s and Szekeres(1935)}]{ES35}
\textsc{Paul Erd{\H{o}}s and George Szekeres}.
\newblock A combinatorial problem in geometry.
\newblock \emph{Composito Math.}, 2:464--470, 1935.

\bibitem[{Harborth(1978)}]{Harborth78}
\textsc{Heiko Harborth}.
\newblock Konvexe {F}\"unfecke in ebenen {P}unktmengen.
\newblock \emph{Elem. Math.}, 33(5):116--118, 1978.

\bibitem[{Hern\'andez-Barrera et~al.(2001)Hern\'andez-Barrera, Urrutia, and
  Zamora}]{HHUZ}
\textsc{Antonio Hern\'andez-Barrera, Ferran Hurtado~Jorge Urrutia, and Carlos
  Zamora}.
\newblock On the midpoints of a plane point set, 2001.
\newblock Unpublished manuscript.

\bibitem[{K{\'a}ra et~al.(2005)K{\'a}ra, P{\'o}r, and Wood}]{KPW-DCG05}
\textsc{Jan K{\'a}ra, Attila P{\'o}r, and David~R. Wood}.
\newblock On the chromatic number of the visibility graph of a set of points in
  the plane.
\newblock \emph{Discrete Comput. Geom.}, 34(3):497--506, 2005.
\newblock \urlprefix\url{http://dx.doi.org/10.1007/s00454-005-1177-z}.

\bibitem[{Matou{\v{s}}ek(2009)}]{Matousek09}
\textsc{Ji{\v{r}}{\'i} Matou{\v{s}}ek}.
\newblock Blocking visibility for points in general position.
\newblock \emph{Discrete Comput. Geom.}, 42(2):219--223, 2009.
\newblock \urlprefix\url{http://dx.doi.org/10.1007/s00454-009-9185-z}.

\bibitem[{Pach(2003)}]{Pach-Midpoints-Geom03}
\textsc{J{\'a}nos Pach}.
\newblock Midpoints of segments induced by a point set.
\newblock \emph{Geombinatorics}, 13(2):98--105, 2003.

\bibitem[{Pinchasi(2009)}]{Pinchasi}
\textsc{Rom Pinchasi}.
\newblock On some unrelated problems about planar arrangements of lines. {I}n
  \emph{Workshop II: Combinatorial Geometry. Combinatorics: Methods and
  Applications in Mathematics and Computer Science.} {Institute for Pure and
  Applied Mathematics, UCLA}, 2009.
\newblock \urlprefix\url{http://11011110.livejournal.com/184816.html}.

\bibitem[{P\'or and Wood(2009)}]{PorWood-Blockers}
\textsc{Attila P\'or and David~R. Wood}.
\newblock On visibility and blockers.
\newblock 2009.
\newblock \urlprefix\url{http://arxiv.org/abs/0912.1150}.

\bibitem[{T{\'o}th and Valtr(2005)}]{TothValtr05}
\textsc{G{\'e}za T{\'o}th and Pavel Valtr}.
\newblock The {E}rd{\H o}s-{S}zekeres theorem: upper bounds and related
  results.
\newblock In \emph{Combinatorial and computational geometry}, vol.~52 of
  \emph{Math. Sci. Res. Inst. Publ.}, pp. 557--568. Cambridge Univ. Press,
  2005.

\end{thebibliography}

\def\cprime{$'$} \def\soft#1{\leavevmode\setbox0=\hbox{h}\dimen7=\ht0\advance
  \dimen7 by-1ex\relax\if t#1\relax\rlap{\raise.6\dimen7
  \hbox{\kern.3ex\char'47}}#1\relax\else\if T#1\relax
  \rlap{\raise.5\dimen7\hbox{\kern1.3ex\char'47}}#1\relax \else\if
  d#1\relax\rlap{\raise.5\dimen7\hbox{\kern.9ex \char'47}}#1\relax\else\if
  D#1\relax\rlap{\raise.5\dimen7 \hbox{\kern1.4ex\char'47}}#1\relax\else\if
  l#1\relax \rlap{\raise.5\dimen7\hbox{\kern.4ex\char'47}}#1\relax \else\if
  L#1\relax\rlap{\raise.5\dimen7\hbox{\kern.7ex
  \char'47}}#1\relax\else\message{accent \string\soft \space #1 not
  defined!}#1\relax\fi\fi\fi\fi\fi\fi} \def\Dbar{\leavevmode\lower.6ex\hbox to
  0pt{\hskip-.23ex\accent"16\hss}D}

\end{document}